\newcommand{\smalliii}[1]{{\vert\kern-0.25ex\vert\kern-0.25ex\vert #1 
    \vert\kern-0.25ex\vert\kern-0.25ex\vert}}
\newcommand{\vertiii}[1]{{\bigl\vert\kern-0.25ex\bigl\vert\kern-0.25ex\bigl\vert #1 
    \bigr\vert\kern-0.25ex\bigr\vert\kern-0.25ex\bigr\vert}}
\crefname{hypothesis}{Hypothesis}{Hypotheses}
\title{Randomly sparsified Richardson iteration: \\ A dimension-independent sparse linear solver
\thanks{
%Submitted to the editors September 28, 2023.
\textbf{Funding:}
%\funding{
JW acknowledges support from the Advanced Scientific
Computing Research Program within the DOE Office of Science through award DE-SC0020427 and from the National Science Foundation through award DMS-2054306.
% RJW acknowledges support from the Office of Naval Research through
% BRC Award N00014-18-1-2363, from the National Science Foundation through FRG Award 1952777,
% and from Caltech through the Carver Mead New Adventures Fund under the aegis of Joel A. Tropp.
}}
\author{Jonathan Weare\thanks{Courant Institute of Mathematical Sciences, New York University, New York, NY 10012
(\email{weare@nyu.edu})}
\and Robert J. Webber\thanks{Department of Mathematics, University of California San Diego, La Jolla, CA 92093 (\email{rwebber@ucsd.edu})}}
\newcommand*{\addFileDependency}[1]{% argument=file name and extension
  \typeout{(#1)}% latexmk will find this if $recorder=0 (however, in that case, it will ignore #1 if it is a .aux or .pdf file etc and it exists! if it doesn't exist, it will appear in the list of dependents regardless)
  \@addtofilelist{#1}% if you want it to appear in \listfiles, not really necessary and latexmk doesn't use this
  \IfFileExists{#1}{}{\typeout{No file #1.}}% latexmk will find this message if #1 doesn't exist (yet)
}
\begin{document}

\maketitle

% REQUIRED
\begin{abstract}
Recently, a class of algorithms combining classical fixed point iterations with repeated random sparsification of approximate solution vectors has been successfully applied to eigenproblems with matrices as large as $10^{108} \times 10^{108}$. So far, a complete mathematical explanation for this success has proven elusive. 

The family of methods has not yet been extended to the important case of linear system solves. In this paper we propose a new scheme based on repeated random sparsification that is capable of solving sparse linear systems in arbitrarily high dimensions.
We provide a complete mathematical analysis of this new algorithm.
Our analysis establishes a faster-than-Monte Carlo convergence rate
and justifies use 
of the scheme even when the solution vector itself is too large to store.
\end{abstract}

% % REQUIRED
% \begin{keywords}
%   example, \LaTeX
% \end{keywords}

% % REQUIRED
% \begin{AMS}
%   68Q25, 68R10, 68U05
% \end{AMS}

\section{Introduction}

In this paper, we propose a randomized approach for solving a linear systems of equations
\begin{equation*}
    \bm{A} \bm{x} = \bm{b},
\end{equation*}
involving a square matrix $\bm{A} \in \mathbb{C}^{n \times n}$, which is typically nonsymmetric, and a vector $\bm{b} \in \mathbb{C}^n$.
Our new approach combines Richardson fixed-point iteration \cite{richardson1911approximation} with a strategy of random sparsification.
The algorithm only requires examining a small, random subset of the columns of $\bm{A}$, 
which ensures the scalability to high dimensions $n \geq 10^9$.
In the case of sparse columns, the algorithm can even be applied for $n$ so large that the solution cannot be stored as a dense vector on a computer.
The algorithm automatically discovers which entries of the solution vector are significant, leading to a high-accuracy sparse approximation.
We will offer a full mathematical analysis and demonstrate the applicability to large-scale PageRank problems.

The classical Richardson iteration is presented as \cref{alg:ji_classical}.
The method can be applied to any linear system $\bm{A} \bm{x} = \bm{b}$, where $\bm{A}$ and $\bm{b}$ have been scaled so the eigenvalues $\lambda_i(\bm{A})$ all satisfy $|\lambda_i(\bm{A}) - 1| < 1$ (for more discussion of scaling, see \cref{sec:kaczmarz}).
Richardson iteration is based on rewriting $\bm{A} \bm{x} = \bm{b}$ using the fixed-point formula
\begin{equation*}
    \bm{x} = \bm{G} \bm{x} + \bm{b}, \quad \text{where} \quad \bm{G} = \mathbf{I} - \bm{A}.
\end{equation*}
Motivated by this formula, Richardson iteration generates a sequence of approximations:
\begin{equation}
\label{eq:original}
    \begin{cases}
    \bm{x}_0 = \bm{0}, \\
    \bm{x}_s = \bm{G} \bm{x}_{s-1} + \bm{b}.
    \end{cases}
\end{equation}
% If $\bm{G}$ has eigenvalues smaller in magnitude than one,
The iterates $\bm{x}_0, \bm{x}_1, \ldots$ can be interpreted as an application of Horner's rule \cite{horner1819new} for calculating the Neumann series $\bm{x}_\star = \sum_{s=0}^\infty \bm{G}^s \bm{b}$,
and they
converge to the solution vector $\bm{x}_\star$ at an exponential rate specified by $\bm{x}_s = \bm{x}_\star - \bm{G}^s \bm{x}_\star$.
Yet each step of Richardson iteration can be computationally intensive, as it requires a full pass through the entries of $\bm{G}$.

\begin{algorithm}[t]
\caption{Classical Richardson iteration for solving $\bm{A} \bm{x} = \bm{b}$ \cite{richardson1911approximation}} \label{alg:ji_classical}
\begin{algorithmic}[1]
\Require Vector $\bm{b} \in \mathbb{C}^n$; matrix $\bm{A} \in \mathbb{C}^{n \times n}$; iteration count $t$
\Ensure Approximate solution $\bm{x}_t$ to $\bm{A} \bm{x} = \bm{b}$
\State $\bm{x}_0 = \bm{0}$.
\For{$s = 1, 2, \ldots, t$}
\State $\bm{x}_s = (\mathbf{I} - \bm{A}) \bm{x}_{s-1} + \bm{b}$
\EndFor
\State Return $\bm{x}_t$
\end{algorithmic}
\end{algorithm}

Our new approach is called ``randomly sparsified Richardson iteration'' (RSRI).
RSRI is similar to the classical Richardson iteration, except that we
replace the deterministic iteration \cref{eq:original} with the randomized iteration
\begin{equation*}
    \begin{cases}
    \bm{x}_0 = \bm{0}, \\
    \bm{x}_s = \bm{G} \bm{\phi}_s(\bm{x}_{s-1}) + \bm{b},
    \end{cases}
\end{equation*}
where $\bm{\phi}_s$ is a random operator that inputs $\bm{x}_{s-1}$ and outputs a sparse random vector $\bm{\phi}_s(\bm{x}_{s-1})$ with no more than $m$ nonzero entries.
Due to sparsity, it is cheap to evaluate $\bm{G} \bm{\phi}_s(\bm{x}_{s-1})$.
Instead of  a full pass through the matrix at each iteration, the algorithm only requires a multiplication involving a random subset of $m$ columns, where $m$ is a tunable parameter.
The random sparsification introduces errors, which are reduced by averaging over successive iterates
$\bm{x}_{t_{\rm b}}, \bm{x}_{t_{\rm b} + 1}, \ldots, \bm{x}_{t - 1}$. 
See the pseudocode in \cref{alg:RSRI_general}.

We will prove that the RSRI solution $\overline{\bm{x}}_t$ converges as $t \rightarrow \infty$
if $\bm{G}$ is a strict $1$-norm contraction:
\begin{equation*}
    \lVert \bm{G} \rVert_1 =
    \max_{1 \leq j \leq n} \sum\nolimits_{i=1}^n |\bm{G}(i,j)| < 1.
\end{equation*}
Under the contractivity assumption, \cref{thm:main} establishes a quantitative convergence rate.
We will also prove that RSRI converges with weaker requirements on $\bm{G}$, which are defined in \cref{thm:extended}.

\begin{algorithm}[t]
\caption{Randomly sparsified Richardson iteration for solving $\bm{A} \bm{x} = \bm{b}$} \label{alg:RSRI_general}
\begin{algorithmic}[1]
\Require Vector $\bm{b} \in \mathbb{C}^n$; program for evaluating columns of $\bm{A} \in \mathbb{C}^{n \times n}$; parameters $m$, $t_{\rm b}$; iteration count $t$
\Ensure Approximate solution $\overline{\bm{x}}_t$ to $\bm{A} \bm{x} = \bm{b}$
\State $\bm{x}_0 = \bm{0}$
\For{$s = 1, 2, \ldots, t - 1$}
\State $\bm{\phi}_s(\bm{x}_{s-1}) = \texttt{sparsify}(\bm{x}_{s-1}, m)$
\Comment{Sparsify using \cref{alg:optimal}}
\State $\bm{x}_s = (\mathbf{I} - \bm{A}) \bm{\phi}_s(\bm{x}_{s-1}) + \bm{b}$
\EndFor
\State Return $\overline{\bm{x}}_t =  \frac{1}{t - t_{\rm b}} \sum\nolimits_{s = t_{\rm b}}^{t - 1} \bm{x}_s$
\end{algorithmic}
\end{algorithm}

RSRI combines Richardson iteration with random sparsification,
but the approach is \emph{faster} than Richardson iteration for high-dimensional problems and \emph{more accurate} than direct Monte Carlo sampling.
\begin{itemize}
    \item \emph{Speed: } Richardson iteration requires a complete pass through the matrix at each iteration.
    In contrast, RSRI only requires reading $m$ columns of $\bm{A}$ at every iteration, where $m$ is a tunable sparsity level that can be quite small (say, $m \leq n / 10^3$).
    \item \emph{Accuracy: } Classical Monte Carlo strategies for solving linear systems \cite{forsythe1950matrix,wasow1952note} give error bars of size $\sim m^{-1/2}$ where $m$ is the number of samples. 
    Our tunable sparsity parameter $m$ plays a similar role to a number of samples.  
    As we increase $m$, we will highlight settings in which RSRI converges at a polynomial rate $\sim m^{-p}$ for $p > 1/2$ or at an exponential rate $\sim {\rm e}^{-c m}$ for $c > 0$.
\end{itemize}

The core component of RSRI is the random sparsification operator $\bm{\phi}_s$, which inputs a vector $\bm{x}_{s-1} \in \mathbb{C}^n$
and outputs a sparse random vector $\bm{\phi}_s(\bm{x}_{s-1}) \in \mathbb{C}^n$.
We will optimize $\bm{\phi}_s$ in \cref{sec:design}, leading to the high-performing sparsification operator described in \cref{alg:pivotal}.
With some probability $p_i$, the operator replaces the $i$th entry $\bm{x}_{s-1}(i)$ with a higher-magnitude entry $\bm{x}_{s-1}(i) / p_i$;
with the remaining probability $1 - p_i$, the operator sets the $i$th entry to zero.
The probabilities $p_i$ increase proportionally to the magnitude $|\bm{x}_{s-1}(i)|$, reaching $p_i = 1$ for the largest-magnitude entries.
Therefore, these large-magnitude entries are \emph{preserved exactly}.
The combination of randomized rounding and exact preservation leads to an unbiased approximation of the input vector.
Moreover, the input and output vectors are close if the input vector has rapidly decaying entries
(\cref{thm:advantages}).

The per-iteration runtime of RSRI is just $\mathcal{O}(m n)$ operations when $\bm{A}$ is a dense matrix.
The per-iteration runtime of RSRI is even lower --- just $\mathcal{O}(mq)$ operations per iteration --- when $\bm{A}$ and $\bm{b}$ are sparse with no more than $q$ nonzero entries per column.
In the sparse case, the runtime and memory costs are independent of dimension. 
Additionally, if the goal is to compute inner products with the exact solution, the memory cost can be reduced from $\mathcal{O}(tmq)$ to $\mathcal{O}(mq)$ by averaging each inner product over the iterates instead of storing the RSRI solution $\overline{\bm{x}}_t$.
% ), the memory requirement is also independent of dimension. 
% Therefore, with proper support for sparse vector operations, we can run the algorithm even when the dimension $N$ of the solution vector far exceeds the available storage.

% Random sparsification can be integrated into a range of iterative methods in numerical linear algebra, yielding accurate, scalable algorithms for high-dimensional problems
Random sparsification is an approach of growing importance in numerical linear algebra
\cite{forsythe1950matrix,strohmer2008randomized,leventhal2010randomized,lim2017fast}.
One example is stochastic gradient descent, which we contrast with RSRI in \cref{sec:kaczmarz}.
As another example, random sparsification has been applied to eigenvalue problems
% to accelerate the power method in~\cite{lim2017fast,greene2019beyond,greene2020improved} and subspace iteration in~\cite{greene2022full,greene2022approximating} with matrices as large $10^{25} \times 10^{25}$.
% Similar sparsification methods have been applied to eigenvalue problems in quantum 
in quantum chemistry with matrices as large as $10^{108} \times 10^{108}$, as discussed in \cref{sec:fri}.
RSRI provides a new instantiation of the random sparsification approach for linear systems, and the present work gives mathematical and empirical demonstrations of RSRI's effectiveness.

% Beyond the RSRI approach for linear systems, we expect that random sparsification can be integrated into a wider array of fixed-point algorithms in numerical linear algebra, yielding accurate and scalable algorithms for extremely high-dimensional problems.

% However, the justification for the random sparsification procedures was largely empirical.
% By contrast, the present work gives a full mathematical justification. 
% that explains the success of RSRI.
% We believe the analysis in this paper presents a promising roadmap for the design and analysis of a suite of provably effective randomly sparsified iterative methods

\subsection{Plan for paper}
The rest of this paper is organized as follows.
\Cref{sec:error_bounds} presents our main error bound for RSRI,
\cref{sec:pagerank} applies RSRI to PageRank problems,
\cref{sec:related} discusses algorithms related to RSRI,
\cref{sec:design} analyzes random sparsification,
and \cref{sec:RSRI_error} proves our main error bound for RSRI.

\subsection{Notation}

We use the shorthand $\lfloor a \rfloor = \max\{z \in \mathbb{Z}:\, z \leq a\}$ and
$a \vee b = \max\{a, b\}$ for $a, b \in \mathbb{R}$.
The complex conjugate of $z \in \mathbb{C}$ is $\overline{z}$.
We write vectors $\bm{v} \in \mathbb{C}^n$ and matrices $\bm{M} \in \mathbb{C}^{n \times n}$ in bold, and we write their elements as $\bm{v}(i)$ or $\bm{M}(i,j)$.
The conjugate transposes are
$\bm{v}^\ast$ and $\bm{M}^\ast$, while $|\bm{v}|$ and $|\bm{M}|$ denote the entry-wise absolute values.
For any vector $\bm{x} \in \mathbb{C}^n$, the decreasing rearrangement 
$\bm{x}^{\downarrow} \in \mathbb{C}^n$
is a vector with the same elements as $\bm{x}$ but placed in weakly decreasing order:
\begin{equation*}
    |\bm{x}^{\downarrow}(1)| \geq |\bm{x}^{\downarrow}(2)| \geq \cdots \geq |\bm{x}^{\downarrow}(n)|.
\end{equation*}
The decreasing rearrangement may not be unique, so we employ the notation only in
contexts where it leads to an unambiguous statement.
The vector $1$-norm, Euclidean norm, and $\infty$-norm are $\lVert \bm{v} \rVert_1 = \sum\nolimits_{i=1}^n |\bm{v}(i)|$, $\lVert \bm{v} \rVert = (\sum\nolimits_{i=1}^n |\bm{v}(i)|^2)^{1/2}$ and $\lVert \bm{v} \rVert_{\infty} = \max_{1 \leq i \leq n} |\bm{v}(i)|$.
The number of nonzero entries is $\lVert \bm{v} \rVert_0 = \#\{1 \leq i \leq n:\,\bm{v}(i) \neq 0\}$.
The matrix $1$-norm is $\lVert \bm{M} \rVert_1 = \max_{\lVert \bm{v} \rVert_1 = 1} \lVert \bm{M} \bm{v} \rVert_1$.

\section{Main error bound for RSRI} \label{sec:error_bounds}

Our main result is the following detailed error bound for RSRI whose proof appears in \cref{sec:RSRI_error}.

\begin{theorem}[Main error bound] \label{thm:main}
Suppose RSRI with sparsity level $m$ is applied to an $n \times n$ linear system $\bm{A} \bm{x} = \bm{b}$ for which $\bm{G} = \mathbf{I} - \bm{A}$ is a strict $1$-norm contraction:
\begin{equation*}
    \lVert \bm{G} \rVert_1 =
    \max_{1 \leq j \leq n} \sum\nolimits_{1 \leq i \leq n} |\bm{G}(i,j)| < 1.
\end{equation*}
RSRI returns a solution $\overline{\bm{x}}_t$ satisfying the bias-variance formula
\begin{equation}
\label{eq:def}
    \mathbb{E} \bigl\lVert \bm{A} \overline{\bm{x}}_t - \bm{b} \bigr\rVert^2
    = \underbrace{\bigl\lVert \bm{A} \,\mathbb{E}[ \overline{\bm{x}}_t] - \bm{b} \bigr\rVert^2}_{\textup{bias}^2}
    + \underbrace{\mathbb{E} \bigl\lVert \bm{A} \overline{\bm{x}}_t - \bm{A} \,\mathbb{E}[\overline{\bm{x}}_t] \bigr\rVert^2}_{\textup{variance}}.
\end{equation}
Here the expectation averages over the random set of entries rounded to zero at each sparsification step.
The square bias is bounded by
\begin{equation*}
    \textup{bias}^2
    \leq \biggl(\frac{2 \lVert \bm{G}^{t_{\rm b}} \bm{x}_\star \bigr\rVert_1}{t - t_{\rm b}}\biggr)^2,
\end{equation*}
where $\bm{x}_\star$ is the exact solution.
The variance is bounded by
\begin{equation*}
    \textup{variance}
    \leq \frac{8 t}{(t - t_{\rm b})^2} \cdot \frac{1}{m} \biggl( 
    \frac{\lVert \bm{b} \rVert_1}{1 - \lVert \bm{G} \rVert_1}\biggr)^2.
\end{equation*}
Additionally, if $m \geq m_{\bm{G}} = 1 / (1 - \lVert \bm{G} \rVert_1^2)$, the variance is bounded by
\begin{equation}
\label{eq:var_bound}
    \textup{variance} 
    \leq \frac{8 t}{(t - t_{\rm b})^2} \cdot
    \min_{i \leq m - m_{\bm{G}}} \frac{1}{m - m_{\bm{G}} - i} 
    \biggl(\sum\nolimits_{j = i + 1}^n \tilde{\bm{x}}^{\downarrow}(j)\biggr)^2.
\end{equation}
Here $\tilde{\bm{x}} \in \mathbb{R}^n$ is the solution to the regularized linear system
$\tilde{\bm{A}} \tilde{\bm{x}} = \tilde{\bm{b}}$, where $\tilde{\bm{A}} = \mathbf{I} - |\bm{G}|$, $\tilde{\bm{b}} = |\bm{b}|$. As usual, $\tilde{\bm{x}}^{\downarrow}$ is the decreasing rearrangement of $\tilde{\bm{x}}$.
\end{theorem}

In \cref{thm:main}, we bound the mean square error of RSRI as the sum of a square bias term and a variance term.
The square bias in RSRI is the square bias that occurs in the deterministic Richardson iteration, after averaging over the iterates $\bm{x}_{t_{\rm b}}, \ldots, \bm{x}_{t-1}$ (see \cref{prop:bias_bound}).
The square bias decays exponentially fast as we increase the burn-in time $t_{\rm b}$ and it is often small in practice.
For example, in our experiments in \cref{sec:pagerank}, we obtain a negligibly small bias by setting $t_{\rm b} = t/2$.

The variance term in \cref{thm:main} decays at a rate $\mathcal{O}(1/m)$ or faster, depending on the decay of the entries in the regularized solution vector $\tilde{\bm{x}}$.
Note that the entries of the regularized solution $\tilde{\bm{x}}$ lie above the entries of the true solution $\bm{x}_\star$ due to the element-wise inequality
\begin{equation*}
    |\bm{x}_\star| 
    = \Bigl| \sum\nolimits_{s=0}^{\infty} \bm{G}^s \bm{b} \Bigr|
    \leq \sum\nolimits_{s=0}^{\infty} |\bm{G}|^s |\bm{b}| = \tilde{\bm{x}}.
\end{equation*}
The vectors $\tilde{\bm{x}}$ and $\bm{x}_\star$ are identical if $\bm{G}$ and $\bm{b}$ are nonnegative-valued, which is the case for PageRank problems (see \cref{sec:pagerank}).

\Cref{thm:main} leads to the main message of this work.
There is a class of problems in which the entries of $\tilde{\bm{x}}$ are decreasing quickly,
and RSRI converges at a fast polynomial or exponential rate.
We establish the following corollary of \cref{thm:main}.

\begin{corollary}[Fast polynomial or exponential convergence] \label{cor:explicit}
Instate the notation of \cref{thm:main}.
If the sparsity level satisfies $m \geq m_{\bm{G}} := 1 / (1 - \lVert \bm{G} \rVert_1)$ and
\begin{equation*}
    \sum\nolimits_{j = i}^n \tilde{\bm{x}}^{\downarrow}(j) \leq i^{-p}
    \quad \text{for} \quad 1 \leq i \leq n,
\end{equation*}
then the RSRI variance \cref{eq:def} is bounded by
\begin{equation*}
    \textup{variance}
    \leq 16 {\rm e} \frac{(p + \frac{1}{2}) t}{(t - t_{\rm b})^2}
    \cdot (m-m_{\bm{G}})^{-2(p+\frac{1}{2})}.
\end{equation*}
If the sparsity level satisfies $m \geq m_{\bm{G}} + \frac{1}{2 c}$ and
\begin{equation*}
    \sum\nolimits_{j = i}^n \tilde{\bm{x}}^{\downarrow}(j) \leq {\rm e}^{-c i}
    \quad \text{for} \quad 1 \leq i \leq n,
\end{equation*}
then the RSRI variance \cref{eq:def} is bounded by
\begin{equation*}
    \textup{variance}
    \leq 16 {\rm e} \frac{c t}{(t - t_{\rm b})^2}
    \cdot {\rm e}^{-2 c (m-m_{\bm{G}})}.
\end{equation*}
\end{corollary}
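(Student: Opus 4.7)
The plan is to substitute the assumed tail decay of $\tilde{\bm{x}}^{\downarrow}$ into the variance bound \cref{eq:var_bound} of \cref{thm:main}, and for each case exhibit a single admissible integer $i$ that makes the minimum small enough. Writing $M := m - m_{\bm{G}}$ and keeping the prefactor $8t/(t - t_{\min})^{2}$ intact, the task reduces to bounding
\[
    \frac{1}{M - i}\biggl(\sum\nolimits_{j \geq i+1} \tilde{x}^{\downarrow}_j\biggr)^{\!2}.
\]

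For the polynomial case, the assumption gives $(\sum_{j \geq i+1} \tilde{x}^{\downarrow}_j)^{2} \leq (i+1)^{-2p}$. Treating $i \approx \alpha M$ for $\alpha \in (0, 1)$ continuously, the expression reduces to $1/[\alpha^{2p}(1-\alpha)] \cdot M^{-(2p+1)}$, and a one-variable calculus exercise maximizes $\alpha^{2p}(1-\alpha)$ at $\alpha^{\ast} = 2p/(2p+1)$. I would set $i = \lfloor \alpha^{\ast} M\rfloor$, which gives $M - i \geq M/(2p+1)$ and $i+1 \geq \alpha^{\ast} M$, so that
\[
    \frac{(i+1)^{-2p}}{M-i} \leq (2p+1)\Bigl(1 + \frac{1}{2p}\Bigr)^{\!2p} M^{-(2p+1)} \leq 2{\rm e}\,(p + \tfrac{1}{2})\, M^{-(2p+1)}
\]
by the classical inequality $(1+1/u)^{u} \leq {\rm e}$. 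Multiplying by $8t/(t-t_{\min})^{2}$ yields the stated prefactor $16{\rm e}\,(p+\tfrac{1}{2})$.

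For the exponential case the right choice is transparent: set $i = \lfloor M - 1/(2c)\rfloor$, which is a nonnegative integer precisely under the hypothesis $M \geq 1/(2c)$. This choice yields $M - i \geq 1/(2c)$ and $i + 1 \geq M - 1/(2c)$, so that
\[
    \frac{{\rm e}^{-2c(i+1)}}{M - i} \leq 2c \cdot {\rm e}^{-2c(M - 1/(2c))} = 2c\,{\rm e}\cdot {\rm e}^{-2cM}.
\]
Multiplying again by $8t/(t-t_{\min})^{2}$ gives the claimed bound.

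The only real obstacle is the integrality constraint on $i$ together with the requirement $i < M$ needed for \cref{eq:var_bound} to be well-defined. The hypotheses on $m$ in the two cases are precisely what guarantees that the rounded choices $\lfloor\alpha^{\ast} M\rfloor$ and $\lfloor M - 1/(2c)\rfloor$ land in the admissible range, and the slack in $(1 + 1/(2p))^{2p} \leq {\rm e}$ is enough to absorb the polynomial rounding error.
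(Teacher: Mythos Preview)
Your proposal is correct and follows exactly the paper's approach: the paper's proof consists solely of the two sentences ``take $i = \bigl\lfloor \frac{2p}{2p + 1} (m - m_{\bm{G}}) \bigr\rfloor$'' and ``take $i = \bigl\lfloor m - m_{\bm{G}} - \frac{1}{2 c} \bigr\rfloor$'' and then ``evaluate \cref{eq:var_bound}'', which are precisely your choices $\lfloor \alpha^{\ast} M\rfloor$ and $\lfloor M - 1/(2c)\rfloor$. Your write-up simply fills in the arithmetic the paper omits, including the use of $(1+1/(2p))^{2p}\leq {\rm e}$ to obtain the constant $16{\rm e}$.
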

\begin{proof}
For the first bound, we take $i = \bigl\lfloor \frac{2p}{2p + 1} (m - m_{\bm{G}}) \bigr\rfloor$ and evaluate \cref{eq:var_bound}.
For the second bound, we take $i = \bigl\lfloor m - m_{\bm{G}} - \frac{1}{2 c} \bigr\rfloor$ and evaluate \cref{eq:var_bound} again.
\end{proof}

For specific problems in which the regularized solution vector $\tilde{\bm{x}}$ is quickly decaying, \cref{cor:explicit} establishes that RSRI is more efficient than a pure Monte Carlo method.
If the tail $\sum\nolimits_{j = i}^n \tilde{\bm{x}}^{\downarrow}(j)$ decays polynomially with rate $i^{-p}$,
then RSRI converges at the rate $\mathcal{O}(m^{-p-1/2})$.
Additionally, if the tail $\sum\nolimits_{j = i}^n \tilde{\bm{x}}^{\downarrow}(j)$ decays exponentially with rate ${\rm e}^{-\Delta i}$, then RSRI converges at the exponential rate $\mathcal{O}({\rm e}^{-\Delta m})$.
In the next section, we will show examples of PageRank problems where the solution vector exhibits polynomial tail decay.

\section{The PageRank problem} \label{sec:pagerank}

Consider a network of $n$ websites.
A restless web surfer chooses an initial website at random, according to a probability vector $\bm{s} \in [0,1]^n$.
At any time, there is a fixed probability $\alpha \in (0, 1)$ that the web surfer follows a random hyperlink. In this case, the probability of transitioning from website $j$ to website $i$ is denoted as $\bm{P}(i,j)$.
With the remaining probability $1 - \alpha$, the web surfer abandons the chain of websites and chooses a fresh website according to the probability vector $\bm{s}$.
The PageRank problem asks: what is the long-run distribution of websites visited by the web surfer?

The PageRank problem can be solved using a linear system of equations.
We let $\bm{x} \in \mathbb{R}^n$ denote the long-run distribution of visited websites and let $\bm{P} \in \mathbb{R}^{n \times n}$ denote the column-stochastic matrix of transition probabilities. 
Then $\bm{x}$ satisfies
\begin{equation}
\label{eq:richardson}
    \bm{x} = \alpha \bm{P} \bm{x} + (1 - \alpha) \bm{s}.
\end{equation}
By setting $\bm{A} = \mathbf{I} - \alpha \bm{P}$ and $\bm{b} = (1 - \alpha) \bm{s}$, we can rewrite this linear system in the standard form
\begin{equation*}
    \bm{A} \bm{x} = \bm{b},
\end{equation*}
where the matrix $\bm{A}$ is typically sparse and high-dimensional.

Since the late 1990s, Google has applied PageRank with $\alpha = .85$ to help determine which websites show up first in their search results.
Originally, Google solved the PageRank problem using the Richardson fixed-point iteration based on \cref{eq:richardson}.
For a network of $N = 2.4 \times 10^7$ websites with $3.2 \times 10^8$ hyperlinks, the developers of PageRank report that Richardson iteration converges to the target accuracy in 52 iterations, which made the algorithm practical for ranking websites in 1998 \cite{page1999pagerank}.
However, the 2025 internet has a greater number of websites and hyperlinks, leading to a more challenging PageRank problem.

Beyond the internet, PageRank problems arise in chemistry, biology, and data science, among other areas
\cite{gleich2015pagerank}.
We focus especially on \emph{personalized PageRank problems}, in which the probability vector $\bm{s}$ has just one or a few nonzero entries.
The personalized PageRank problem identifies which vertices are important in a local region associated with the nonzero entries of $\bm{s}$.
For example, personalized PageRank problems arise when recommending items to particular users on Netflix or Amazon \cite{gori2007itemrank}, when disambiguating the meanings of words in a sentence \cite{agirre2009personalizing}, or when constructing personalized reading lists \cite{Wis06}.

The rest of this section is organized as follows.
\Cref{sec:pagerank_history} gives a history of Monte Carlo algorithms for large-scale personalized PageRank problems, \cref{sec:faster} presents a mathematical analysis showing that RSRI is effective for the PageRank problems, and \cref{sec:empirical} presents empirical tests.
Alternative PageRank algorithms with different design principles are discussed in \cref{sec:coordinate_pagerank}.

\subsection{History of Monte Carlo PageRank algorithms} \label{sec:pagerank_history}

Personalized PageRank problems depend most strongly on the vertices associated with the nonzero entries of $\bm{s}$ and nearby regions,
so a \emph{local} search over the vertices can in principle produce a high-quality approximation.
Based on this insight,
several researchers have proposed algorithms to identify the most important vertices and sparsely approximate the PageRank solution vector.
The Monte Carlo algorithms of \cite{fogaras2005towards,avrachenkov2007monte,borgs2014multiscale} simulate a web surfer that moves from vertex to vertex, and the visited vertices determine the sparsity pattern.
In contrast, the deterministic algorithms of \cite{jeh2003scaling,berkhin2006bookmark,gleich2006approximating,sarlos2006to,andersen2007local} identify a set of significant vertices by progressively adding new  vertices for which the residual is large.

Monte Carlo algorithms for the PageRank problem converge at the typical $\sim m^{-1/2}$ error rate, where $m$ is the number of samples.
For example, the Monte Carlo scheme of Fogaras et al.~\cite{fogaras2005towards} simulates $m$ restless web surfers until the first time they become bored and abandon the search.
The last websites visited by the surfers are recorded as $Z_1, Z_2, \ldots, Z_m \in \{1, \ldots, n\}$.
Since each $Z_i$ is an independent sample from the PageRank distribution, the PageRank vector $\bm{x}_\star \in \mathbb{R}^n$ is approximated as $\hat{\bm{x}} = \frac{1}{m} \sum\nolimits_{i=1}^m \bm{e}_{Z_i}$ where $\bm{e}_j$ denotes the $j$th basis vector.
The approximation satisfies the following sharp variance bound:
\begin{equation}
\label{eq:calculation}
\begin{aligned}
\mathbb{E} \lVert \hat{\bm{x}} - \bm{x}_\star \rVert^2 
&= \sum\nolimits_{i=1}^n \mathbb{E} | \hat{\bm{x}}(i) - \bm{x}_\star(i) |^2
= \frac{1}{m} \sum\nolimits_{i=1}^n \bm{x}_\star(i) (1 - \bm{x}_\star(i)) \\
&\leq \frac{1}{m} \sum\nolimits_{i=1}^n \bm{x}_\star(i)
= \frac{1}{m}.
\end{aligned}
\end{equation}
Here, we use the fact that each $\hat{\bm{x}}(i)$ is a rescaled binomial random variable with parameters $m$ and $\bm{x}_\star(i)$, so the variance is $\frac{1}{m} \bm{x}_\star(i) (1 - \bm{x}_\star(i))$.
The error bound \cref{eq:calculation} is completely independent of the dimension, but it signals a slow $m^{-1/2}$ convergence in the root-mean-square error as we increase the number of samples $m$.
Alternative Monte Carlo algorithms such as \cite[Alg.~3]{avrachenkov2007monte} and \cite[Alg.~3]{borgs2014multiscale} improve the runtime and variance by a constant factor, but they do not change the fundamental $m^{-1/2}$ convergence rate.

\subsection{Faster PageRank by RSRI} \label{sec:faster}

When we apply RSRI to solve the personalized PageRank problem and we use the minimal sparsity setting $m = 1$, it is equivalent to a standard Monte Carlo algorithm \cite[Alg.~3]{avrachenkov2007monte}.
However, as we raise $m$, RSRI exhibits more complicated behavior and it satisfies the following error bound, with the proof appearing in \cref{sec:RSRI_error}.
\begin{proposition}[PageRank error bound] \label{prop:pagerank}
If randomly sparsified Richardson iteration is applied to the PageRank problem $\bm{x} = \alpha \bm{P} \bm{x} + (1 - \alpha) \bm{s}$ with parameters $m \geq m_{\alpha} := 1/(1 - \alpha^2)$ and $t \geq 2 t_{\rm b}$, RSRI returns a solution vector $\overline{\bm{x}}_t$ satisfying
\begin{equation*}
    \mathbb{E} \bigl\lVert \overline{\bm{x}}_t - \bm{x}_\star \bigr\rVert^2
    \leq \biggl[\frac{4 \alpha^{t_{\rm b}}}{(1 - \alpha) t}\biggr]^2
    + \frac{16}{(1 - \alpha)^2 t}
    \cdot \min_{0 \leq i \leq m - m_{\alpha}} \frac{1}{m - m_{\alpha} - i} 
    \biggl[\sum\nolimits_{j = i+1}^n \bm{x}_\star^{\downarrow}(j)\biggr]^2,
\end{equation*}
where $\bm{x}_\star^{\downarrow}$ is the decreasing rearrangement of the solution vector $\bm{x}_\star$.
\end{proposition}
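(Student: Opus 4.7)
The plan is to specialize \cref{thm:main} to the PageRank setting and then to convert its bound on the residual $\bm{A}\overline{\bm{X}}^{(t)} - \bm{b}$ into the required bound on the solution error $\overline{\bm{X}}^{(t)} - \bm{x}$. First I identify the relevant quantities. With $\bm{A} = \mathbf{I} - \alpha\bm{P}$ and $\bm{b} = (1-\alpha)\bm{s}$, the iteration matrix is $\bm{G} = \alpha\bm{P}$, so column-stochasticity of $\bm{P}$ gives $\lVert\bm{G}\rVert_1 = \alpha$ and hence $m_{\bm{G}} = 1/(1-\alpha^2) = m_\alpha$. Because $\bm{G}$ and $\bm{b}$ have nonnegative entries, the regularized system $\tilde{\bm{A}}\tilde{\bm{x}} = \tilde{\bm{b}}$ coincides with the original system, so $\tilde{\bm{x}} = \bm{x}$ and the tail sum in \cref{thm:main} becomes $\sum_{j > i} x_j^\downarrow$. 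Finally, since $\bm{x}$ is a probability vector, $\lVert\bm{G}^{t_{\min}}\bm{x}\rVert_1 \leq \alpha^{t_{\min}}$.

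To convert residual error to solution error I exploit the Neumann series $\bm{A}^{-1} = \sum_{k \geq 0} \alpha^k \bm{P}^k$, which satisfies $\lVert\bm{A}^{-1}\bm{v}\rVert_1 \leq \lVert\bm{v}\rVert_1/(1-\alpha)$ because $\lVert\bm{P}^k\rVert_1 = 1$ for every $k$; combined with $\lVert\cdot\rVert \leq \lVert\cdot\rVert_1$, this extracts a clean factor $1/(1-\alpha)$. For the bias, the telescoping identity $\bm{A}\bm{x}^{(s)} - \bm{b} = -(\bm{G}^{s+1} - \bm{G}^{s+2})\bm{x}$ (with $\bm{x}^{(s)}$ the deterministic Richardson iterate) underlies \cref{thm:main}'s bias bound and yields $\lVert\bm{A}\mathbb{E}\overline{\bm{X}}^{(t)} - \bm{b}\rVert_1 \leq 2\alpha^{t_{\min}}/(t-t_{\min})$, so multiplying by $1/(1-\alpha)$ and using $t - t_{\min} \geq t/2$ produces the stated bias bound. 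For the variance I revisit the martingale decomposition behind \cref{thm:main}, writing $\overline{\bm{X}}^{(t)} - \mathbb{E}\overline{\bm{X}}^{(t)} = \sum_r \bm{M}^{(r)}\bm{Z}^{(r)}$, where $\bm{Z}^{(r)} = \bm{\Phi}^{(r)}(\bm{X}^{(r-1)}) - \bm{X}^{(r-1)}$ is the mean-zero sparsification increment and $\bm{M}^{(r)}$ is a polynomial in $\bm{G}$ satisfying $\lVert\bm{M}^{(r)}\rVert_1 \leq 1/[(1-\alpha)(t-t_{\min})]$. Orthogonality of martingale differences in the Euclidean norm splits $\mathbb{E}\lVert\sum_r\bm{M}^{(r)}\bm{Z}^{(r)}\rVert^2$ into per-step terms, which bound as $\lVert\bm{M}^{(r)}\rVert_1^2 \cdot \mathbb{E}\lVert\bm{Z}^{(r)}\rVert_1^2$ and pull the $1/(1-\alpha)^2$ factor outside the sparsification analysis of \cref{sec:design}.

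The main obstacle is that \cref{thm:main} states its variance bound in the Euclidean norm of the residual, while $\lVert\bm{A}^{-1}\rVert_2$ is not directly controlled for PageRank. The workaround is to extract the $1/(1-\alpha)$ factor from $\bm{A}^{-1}$ via the 1-norm at the level of each martingale increment, rather than trying to apply $\bm{A}^{-1}$ to the already-summed residual variance; the loss from $\lVert\cdot\rVert \leq \lVert\cdot\rVert_1$ is absorbed because the per-step sparsification bound is already stated in terms of the tail of $x_j^\downarrow$. The remaining step is bookkeeping: substituting $t - t_{\min} \geq t/2$ converts the prefactors $2/(t - t_{\min})$ and $\mathcal{O}(t/(t - t_{\min})^2)$ of \cref{thm:main} into the stated $4\alpha^{t_{\min}}/[(1-\alpha)t]$ and $16/[(1-\alpha)^2 t]$.
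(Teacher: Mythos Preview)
Your overall plan---identify $\bm{G}=\alpha\bm{P}$, $m_{\bm{G}}=m_\alpha$, $\tilde{\bm{x}}=\bm{x}$, and convert residual error to solution error via $\lVert\bm{A}^{-1}\rVert_1=1/(1-\alpha)$---matches the paper, and your bias argument is fine. But there is a genuine gap in the variance argument.

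You correctly identify the obstacle: \cref{thm:main} bounds the \emph{Euclidean} residual $\mathbb{E}\lVert\bm{A}\overline{\bm{X}}^{(t)}-\bm{b}\rVert^2$, and $\lVert\bm{A}^{-1}\rVert_2$ is not controlled. Your workaround is to write $\overline{\bm{X}}^{(t)}-\mathbb{E}\,\overline{\bm{X}}^{(t)}=\sum_r\bm{M}^{(r)}\bm{Z}^{(r)}$ and bound each term by $\lVert\bm{M}^{(r)}\rVert_1^2\,\mathbb{E}\lVert\bm{Z}^{(r)}\rVert_1^2$. That inequality is valid, but the sparsification analysis of \cref{sec:design} does \emph{not} control $\mathbb{E}\lVert\bm{Z}^{(r)}\rVert_1^2$: \cref{prop:relaxed} and \cref{thm:advantages} bound only $\mathbb{E}\lVert\bm{Z}^{(r)}\rVert^2$ and $\smalliii{\bm{Z}^{(r)}}^2$. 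In fact $\lVert\bm{\Phi}(\bm{v})-\bm{v}\rVert_1$ need not decay with $m$ at all---pivotal sparsification preserves the $1$-norm exactly, so $\lVert\bm{\Phi}(\bm{v})-\bm{v}\rVert_1=2\sum_{i\notin\textsf{D}\cup\textsf{S}}|v_i|$, which is of order $\sum_{j\notin\textsf{D}}|v_j|$ and carries no $1/m$ factor. Hence the tail bound $\min_i\frac{1}{m-m_\alpha-i}(\sum_{j>i}x_j^\downarrow)^2$ cannot be recovered along this route; the ``loss from $\lVert\cdot\rVert\le\lVert\cdot\rVert_1$'' is \emph{not} absorbed.

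The paper's fix is to invoke not \cref{thm:main} but its triple-norm strengthening, \cref{thm:extended}. The triple norm is the correct dual object for extracting a $1$-norm operator factor: since $\lVert(\bm{A}^{-1})^\ast\bm{f}\rVert_\infty\le\lVert\bm{A}^{-1}\rVert_1\lVert\bm{f}\rVert_\infty$, one obtains
\begin{equation*}
\smalliii{\overline{\bm{X}}^{(t)}-\bm{x}}^2\le\lVert\bm{A}^{-1}\rVert_1^2\,\smalliii{\bm{A}\overline{\bm{X}}^{(t)}-\bm{b}}^2
\end{equation*}
in one line, and then $\mathbb{E}\lVert\cdot\rVert^2\le\smalliii{\cdot}^2$ (\cref{lem:averaged}) converts to the Euclidean norm at the very end. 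Equivalently, at the per-step level the inequality you need is $\mathbb{E}\lVert\bm{M}^{(r)}\bm{Z}^{(r)}\rVert^2\le\smalliii{\bm{M}^{(r)}\bm{Z}^{(r)}}^2\le\lVert\bm{M}^{(r)}\rVert_1^2\,\smalliii{\bm{Z}^{(r)}}^2$, and it is $\smalliii{\bm{Z}^{(r)}}^2$---not $\mathbb{E}\lVert\bm{Z}^{(r)}\rVert_1^2$---that \cref{thm:advantages} bounds by the tail of $x_j^\downarrow$.
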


\Cref{prop:pagerank} bounds the mean square PageRank error in the Euclidean norm.
The variance term decays at a rate at least $\mathcal{O}(m^{-1})$, and it decays more rapidly than $\mathcal{O}(m^{-1})$ when the entries of $\bm{x}$ are decaying rapidly.
Moreover, we observe that the entries of $\bm{x}$ must decay rapidly if $\bm{P}$ is sparse, which is typically the case in PageRank problems.
We present the following sparsity-based estimate.
\begin{lemma}[Decay of entries in the PageRank vector] \label{lem:decay_rate}
Consider the personalized PageRank problem 
$\bm{x} = \alpha \bm{P} \bm{x} + (1 - \alpha) \bm{e}_i$, 
and assume each column of $\bm{P}$ has at most $q$ nonzero entries, where $q \geq 2$.
Then the entries of the solution $\bm{x}_\star$ satisfy
\begin{equation*}
    \sum\nolimits_{j = i}^n \bm{x}_\star^{\downarrow}(j)
    \leq \alpha^{-1} i^{-\log_q(1/\alpha)}.
\end{equation*}
\end{lemma}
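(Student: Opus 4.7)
The plan is to exploit the Neumann series representation
\[
    \bm{x} = (1 - \alpha)\sum_{s=0}^\infty \alpha^s \bm{P}^s \bm{e}_r,
\]
which is valid since $\lVert \alpha \bm{P} \rVert_1 = \alpha < 1$. Because $\bm{P}$ is column-stochastic and $\bm{e}_r$ is a probability vector, each $\bm{P}^s \bm{e}_r$ is a probability vector, so $\bm{x}$ is a convex combination of probability vectors and in particular $\lVert \bm{x} \rVert_1 = 1$. The central observation is then a sparsity-propagation bound: by induction on $s$, $\lvert \operatorname{supp}(\bm{P}^s \bm{e}_r) \rvert \leq q^s$. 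Indeed, $\operatorname{supp}(\bm{P}^{s+1}\bm{e}_r) \subseteq \bigcup_{j \in \operatorname{supp}(\bm{P}^s \bm{e}_r)} \operatorname{supp}(\bm{P} \bm{e}_j)$, which is a union of at most $q^s$ sets each of size at most $q$.

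Next I would set $S_k = \bigcup_{s=0}^k \operatorname{supp}(\bm{P}^s \bm{e}_r)$, so that $\lvert S_k \rvert \leq \sum_{s=0}^k q^s = (q^{k+1}-1)/(q-1)$. For any $j \notin S_k$ the first $k+1$ terms of the Neumann series vanish at coordinate $j$, so
\[
    \sum_{j \notin S_k} x_j \;\leq\; (1-\alpha) \sum_{s > k} \alpha^s \lVert \bm{P}^s \bm{e}_r \rVert_1 \;=\; \alpha^{k+1}.
\]
To convert this into a bound on the decreasing rearrangement, I use the basic inequality $\sum_{j=1}^{\lvert T \rvert} x_j^{\downarrow} \geq \sum_{j \in T} x_j$ with $T = S_k$, which together with $\lVert \bm{x} \rVert_1 = 1$ yields
\[
    \sum_{j > \lvert S_k \rvert} x_j^{\downarrow} \;\leq\; 1 - \sum_{j \in S_k} x_j \;=\; \sum_{j \notin S_k} x_j \;\leq\; \alpha^{k+1}.
\]

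Finally I would tune $k$ to match the claimed rate. For $i \geq q$, set $k+1 = \lfloor \log_q i \rfloor \geq 1$, so $q^{k+1} \leq i$; then $\lvert S_k \rvert \leq (q^{k+1}-1)/(q-1) \leq (i-1)/(q-1) \leq i-1$, since $q \geq 2$. Hence
\[
    \sum_{j \geq i} x_j^{\downarrow} \;\leq\; \sum_{j > \lvert S_k \rvert} x_j^{\downarrow} \;\leq\; \alpha^{k+1} = \alpha^{\lfloor \log_q i \rfloor} \;\leq\; \alpha^{\log_q i - 1} \;=\; \alpha^{-1}\, i^{-\log_q(1/\alpha)}.
\]
For the range $i < q$, where $\lfloor \log_q i \rfloor = 0$ and the above choice breaks down, the bound follows trivially from $\sum_{j \geq i} x_j^{\downarrow} \leq \lVert \bm{x} \rVert_1 = 1$ together with the inequality $i^{-\log_q(1/\alpha)} \geq q^{-\log_q(1/\alpha)} = \alpha$ that holds because $i \leq q - 1 < q$ and $-\log_q(1/\alpha) < 0$.

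The main obstacle is the bookkeeping in the last step: the target bound carries a prefactor of $\alpha^{-1}$, and obtaining it requires using the sharper geometric-series bound $\lvert S_k \rvert \leq (q^{k+1}-1)/(q-1)$ rather than the cruder $\lvert S_k \rvert \leq q^{k+1}$, and then choosing $k+1 = \lfloor \log_q i \rfloor$ so that the floor loses at most one factor of $\alpha$. A less careful choice would cost an extra factor of $\alpha^{-1}$ or $\alpha^{-2}$, so threading these constants cleanly — and separately covering the small-$i$ boundary case — is the only delicate part of the argument.
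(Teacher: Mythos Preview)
Your proof is correct and follows essentially the same route as the paper: both use the Neumann series, the sparsity bound $|\operatorname{supp}(\bm{P}^s\bm{e}_r)|\le q^s$, and the observation that vertices outside the reach of the first $k$ steps carry total mass at most $\alpha^{k+1}$, then convert this to a tail bound on $\bm{x}^{\downarrow}$ via rearrangement. The paper packages the reachability argument through a distance-sorted permutation $\beta$ rather than your explicit sets $S_k$, and it does not separately treat the boundary case $i<q$, but the substance is identical.
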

\begin{proof}[Proof of \cref{lem:decay_rate}]
Let $\textup{dist}(i, j)$ denote the length of the shortest path 
\begin{equation*}
    i = k_0 \rightarrow k_1 \rightarrow \cdots \rightarrow k_{s-1} \rightarrow k_s = j,
\end{equation*}
which has positive probability of occurring, i.e., $\bm{P}(k_r,k_{r-1}) > 0$ for each $r = 1, \ldots, s$.
By the sparsity condition, there can be at most $1 + q + q^2 + \cdots + q^{s-1} < q^s$ paths of length $s - 1$ or shorter.
Now let $\sigma_1, \ldots, \sigma_n$ be a permutation of the indices $1, \ldots, n$ so that $\sigma_1 = i$ and $\textup{dist}(i, \sigma_j) \leq \textup{dist}(i, \sigma_k)$ whenever $j \leq k$.
For any index $j \geq q^s$, we must have $\textup{dist}(i, \sigma_j) \geq s$.

Now we use the representation $\bm{x}_\star = (1 - \alpha) \sum\nolimits_{s=0}^{\infty} \alpha^s \bm{P}^s \bm{e}_i$ and the fact that $\bm{P}^s$ is column-stochastic to calculate
\begin{align*}
    \sum\nolimits_{j \geq m} \bm{x}_\star(\sigma_j)
    &= (1 - \alpha) \sum\nolimits_{j \geq m}
    \sum\nolimits_{s=0}^{\infty} \alpha^s \bm{P}^s (\sigma_j, i) \\
    &= (1 - \alpha) \sum\nolimits_{j \geq m}
    \sum\nolimits_{s=\lfloor \log_q(m) \rfloor}^{\infty} \alpha^s \bm{P}^s(\sigma_j, i) \\
    & \leq 
    (1 - \alpha) \sum\nolimits_{s=\lfloor \log_q(m) \rfloor}^{\infty} \alpha^s
    = \alpha^{\lfloor \log_q(m) \rfloor}
    \leq
    \alpha^{-1} m^{-\log_q(1/\alpha)},
\end{align*}
which establishes the result.
\end{proof}

By combining, \cref{prop:pagerank,lem:decay_rate}, we guarantee that RSRI converges at a $\mathcal{O}(m^{-1/2 - \log_q(1/\alpha)})$ rate as we increase the sparsity parameter $m$.
This rate is faster than the $\sim m^{-1/2}$ Monte Carlo error scaling, and it theoretically separates RSRI from pure Monte Carlo methods.
For many PageRank problems, we expect even faster convergence than \cref{lem:decay_rate} would suggest.
For example, the PageRank solution decays especially rapidly when there are ``hub'' vertices which have many incoming edges \cite{jeh2003scaling}.

\subsection{Empirical tests} \label{sec:empirical}

To test the empirical performance of RSRI, we apply the algorithm to three personalized PageRank problems:
\begin{itemize}
    \item \textbf{Amazon electronics} \cite{ni2019justifying,anand2019amazon}. 
    % The data consists of $4.3 \times 10^6$ five-star reviews for 2019 Amazon electronics products.
    We rank $n = 3.6 \times 10^5$ Amazon electronics products that received five-star reviews and were available in 2019.
    The transitions probabilities are determined by linking from an electronics product to a different random product receiving a 5-star review from the same reviewer.
    \item \textbf{Notre Dame websites} \cite{albert1999diameter,leskovec1999notre}. 
    % The data consists of $1.5 \times 10^6$ hyperlinks between websites in the nd.edu domain in 1999.
    We rank $n = 3.2 \times 10^5$ websites within the 1999 University of Notre Dame web domain.
    The transition probabilities are determined by selecting a random outgoing hyperlink.
    \item \textbf{Airports} \cite{opsahl2011why}.
    % The data consists of $4.4 \times 10^4$ flight routes, which were documented by OpenFlights.org for the year 2010.
    We rank $n = 2.9 \times 10^3$ global airports.
    The transition probabilities are determined by selecting a random outgoing flight from the documented flight routes in 2010.
\end{itemize}

In all three problems, we compute the personalized PageRank vector for a randomly chosen vertex $i \in \{1, \ldots, n\}$.
% Following standard practice, i
If there is a dangling vertex $j$ which lacks outgoing edges, 
we update the $j$th column of the transition matrix $\bm{P}$ to be the basis vector $\bm{e}_i$.
This is equivalent to solving the unnormalized problem $\bm{x} = \alpha \bm{P} \bm{x} + (1 - \alpha) \bm{e}_i$ and rescaling the solution $\bm{x}$ to sum to one \cite[Thm.~2.5]{gleich2015pagerank}.
We apply RSRI with parameters $\alpha = .85$, $t = 1000$, $t_{\rm b} = t/2$
and present the root mean square error $(\mathbb{E} \lVert \hat{\bm{x}}_t - \bm{x}_\star \rVert^2)^{1/2}$
in \cref{fig:error} (right).
\begin{figure}[t]
    \centering
    \includegraphics[scale=.4]{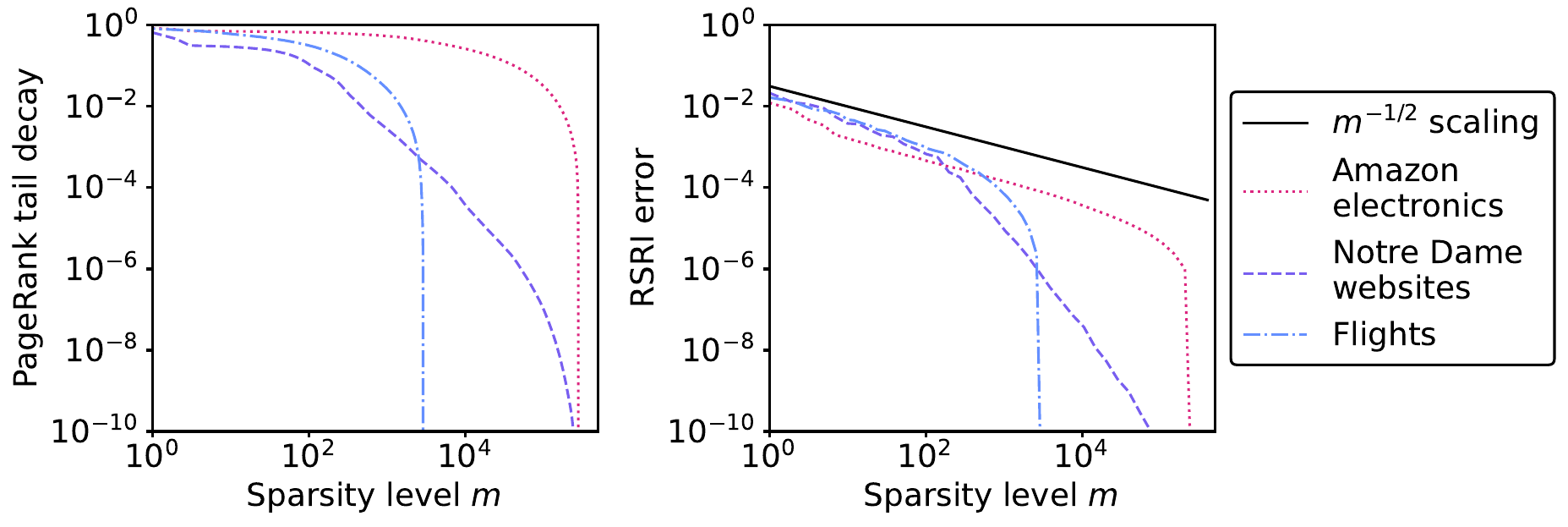}
    \caption{\textbf{(RSRI error scaling.)}
    Left panel shows tail decay $\sum\nolimits_{i = m}^n \bm{x}^{\downarrow}(i)$ of the sorted PageRank solution $\bm{x}^{\downarrow}$ for three personalized PageRank problems documented in \cref{sec:empirical}.
    Right panel shows RSRI error $(\mathbb{E} \lVert \hat{\bm{x}} - \bm{x}_\star \rVert^2)^{1/2}$ with sparsity level $m$.
    The right panel is essentially the left panel multiplied by a factor of $m^{-1/2}$}
    \label{fig:error}
\end{figure}
The expectation is evaluated empirically over ten independent trials.
See \url{https://github.com/rjwebber/rsri} for code for all experiments in this paper.

The results verify that RSRI provides a high-accuracy solution for all three PageRank problems, reaching error levels of $10^{-6}$--$10^{-3}$ even when $m \leq n / 10^2$.
Moreover, the error decays at a faster-than-$m^{-1/2}$ rate as we increase $m$.
The precise rate of convergence depends on the tail decay $\sum\nolimits_{i = m}^n \bm{x}_\star^{\downarrow}(i)$.
% of the sorted PageRank solution, $\bm{x}^{\downarrow}$, with $\bm{x}^{\downarrow}(1) \geq \bm{x}^{\downarrow}(2) \geq \cdots$ 
% as shown in \cref{fig:error} (left).
% In accordance with \cref{cor:explicit}, 
The RSRI error (\cref{fig:error}, right) is approximately the PageRank tail decay (\cref{fig:error}, left) multiplied by a $\mathcal{O}(m^{-1/2})$ prefactor term.
In accordance with \cref{prop:pagerank},
RSRI converges fastest when the tail decays fastest, which occurs in the Notre Dame websites problem.

\section{Related algorithms} \label{sec:related}

Here we review the algorithms most closely related to RSRI.

\subsection{The Monte Carlo method for linear systems} \label{sec:monte_carlo}
In the late 1940s, Ulam and von Neumann introduced a Monte Carlo strategy for solving linear systems $\bm{A} \bm{x} = \bm{b}$ \cite{forsythe1950matrix}.
To motivate this strategy, let us write the solution vector $\bm{x}_\star$ as
\begin{equation*}
    \bm{x}_\star = \sum\nolimits_{s=0}^{\infty} \bm{G}^s \bm{b}, \quad \text{where} \quad \bm{G} = \mathbf{I} - \bm{A},
\end{equation*}
and let us assume that $\bm{b}$ and $\bm{G}$ have nonnegative-valued entries with $\sum\nolimits_{i=1}^n \bm{b}(i) = 1$ and $\sum\nolimits_{i=1}^n \bm{G}(i,j) < 1$ for $1 \leq j \leq n$.
Ulam and von Neumann interpreted each term $\bm{G}^s \bm{b}$ using a finite-state Markov chain $(X_s)_{s \geq 0}$ with transition probabilities
\begin{equation}
\label{eq:stochastic1}
    \mathbb{P}\{X_0 = i\} = \bm{b}(i), \qquad \mathbb{P}\{X_s = i | X_{s-1} = j\} = \bm{G}(i,j).
\end{equation}
The transition probabilities $(\bm{G}(i,j))_{1 \leq i \leq n}$ sum to less than one, so there is a positive probability
\begin{equation}
\label{eq:stochastic2}
    p_j := 1 - \sum\nolimits_{i=1}^n \bm{G}(i,j).
\end{equation}
that the Markov chain occupying state $j$ at time $s$ is \emph{killed}.
In this case, define the killing time $\tau$ to be $\tau = s$.

As a practical algorithm, Ulam and von Neumann suggested simulating the Markov chain with killing on a computer and estimating the vector $\bm{x}_\star$ using
\begin{equation}
\label{eq:unbiased}
    \hat{\bm{x}} = \frac{1}{p_{X_{\tau}}} \bm{e}_{X_{\tau}},
\end{equation}
where $p_j$ is the $j$th killing probability \cref{eq:stochastic2}, $\bm{e}_j$ is the $j$th basis vector, and $X_{\tau}$ is the state of the Markov chain when the random killing occurs.
This stochastic estimator is unbiased but has a high variance, so
Ulam and von Neumman proposed averaging over many independent estimators $\hat{\bm{x}}$ to bring down the variance.
They also introduced a more complicated Monte Carlo procedure for solving systems in which $\bm{b}$ and $\bm{G}$ can have negative-valued entries \cite{forsythe1950matrix}, and further improvements were introduced by Wasow \cite{wasow1952note} soon after.
% Following the introduction of Ulam and von Neumann's method \cite{forsythe1950matrix},
% Wasow \cite{wasow1952note} suggested simulating the same Monte Carlo dynamics \cref{eq:stochastic1,eq:stochastic2} but using a different unbiased estimator
% \begin{equation}
% \label{eq:improved}
%     \hat{\bm{x}} = \sum\nolimits_{t=0}^{\tau} \bm{e}_{X_t},
% \end{equation}
% This alternative estimator uses all the positions of the Markov chain up to the time of random killing, improving the accuracy for many problems.

Since the 1950s, computational scientists have applied the Monte Carlo method to solve large-scale linear systems arising from numerical discretizations of PDEs \cite{sadeh1974monte,wang2008monte,evans2014monte,benzi2017analysis} and integral equations \cite{lai2009adaptive}.
The method has been analyzed by numerical analysts \cite{forsythe1950matrix,wasow1952note,edmundson1953monte,curtiss1953monte,bauer1958monte,dimov1991minimization,okten2005solving,ji2013convergence} and theoretical computer scientists \cite{ozdaglar2020asynchronous,shyamkumar2016sublinear,andoni2018solving}.
The Monte Carlo method was even rediscovered in the PageRank community, leading to the Monte Carlo PageRank algorithms of Fogaras et al. (2005) \cite{fogaras2005towards} and Avrachenkov et al. (2007) \cite{avrachenkov2007monte} as discussed in \cref{sec:pagerank_history}.
Despite all this research, however, the method is fundamentally limited by a $ m^{-1/2}$ convergence rate.
Modifications to the basic procedure do not fundamentally change the convergence rate, or else they change the character of the algorithm by reading $\mathcal{O}(n)$ columns per iteration \cite{halton1994sequential}.

Randomly sparsified Richardson iteration is based on random sampling and is a Monte Carlo method in the case $m = 1$.
However, for $m > 1$, RSRI improves on past Monte Carlo methods.
%To our knowledge, it is the first 
We are not aware of another
Monte Carlo-based algorithm that achieves faster-than-$m^{-1/2}$ convergence while requiring just $\mathcal{O}(m)$ column evaluations per iteration.

\subsection{Stochastic gradient descent}
\label{sec:kaczmarz}

Stochastic gradient descent (SGD) is a class of methods that speed up traditional gradient descent by subsampling the terms in an expansion of the gradient  \cite{rosenblatt1958perceptron,needell2015stochastic,garrigos2024handbookconvergencetheoremsstochastic}.
Like traditional gradient descent, SGD is designed to minimize a scalar loss function $L: \mathbb{R}^d \rightarrow \mathbb{R}$.
When solving a linear system $\bm{A} \bm{x} = \bm{b}$, there are a couple canonical loss functions that lead to a minimizer $\bm{x} = \bm{A}^{-1} \bm{b}$.
\begin{itemize}
\item[(a)] When $\bm{A}$ is strictly positive definite, one choice of loss function is $L_1(\bm{x}) = \frac{1}{2} \bm{x}^* \bm{A} \bm{x} - \bm{x}^* \bm{b}$.
\item[(b)] When $\bm{A}$ is any invertible matrix, a different choice of loss function is $L_2(\bm{x}) = \frac{1}{2} \lVert \bm{A} \bm{x} - \bm{b} \rVert^2$.
\end{itemize}
Gradient descent methods minimize either of these loss functions by taking steps in the negative gradient direction, $-\nabla L_1(\bm{x})$ or $-\nabla L_2(\bm{x})$.
In contrast, SGD methods subsample just one or a few terms from the gradient expansion
\begin{align*}
    & \nabla L_1(\bm{x}) = \sum\nolimits_{j=1}^n \bm{g}_{1,j}(\bm{x}),
    \quad \text{where} \quad
    \bm{g}_{1,j}(\bm{x}) = \bigl[\bm{A}(j, \cdot) \bm{x} - \bm{b}(j)\bigr] \bm{e}_j, \\
    \text{or} \quad &\nabla L_2(\bm{x}) = \sum\nolimits_{j=1}^n \bm{g}_{2,j}(\bm{x}), \quad \text{where} \quad
    \bm{g}_{2,j}(\bm{x}) = \bigl[\bm{A}(j, \cdot) \bm{x} - \bm{b}(j)\bigr] \bm{A}(j, \cdot)^*.
\end{align*}
For example, uniform mini-batch SGD \cite{gower2019sgd,garrigos2024handbookconvergencetheoremsstochastic} samples an index set $\textsf{S} \subseteq \{1, \ldots, n\}$ uniformly at random without replacement.
Then, it makes an update
\begin{equation*}
    \bm{x}_s = \bm{x}_{s-1} - \frac{\alpha n}{|\textsf{S}|} \sum\nolimits_{j \in \textsf{S}} \bm{g}_{i,j}(\bm{x}_{s-1}),
\end{equation*}
where $\alpha > 0$ is a step size parameter and $|\textsf{S}|$ is the cardinality of $\textsf{S}$.
In contrast, importance sampling SGD \cite{needell2015stochastic,moorman2020randomized} samples $\textsf{S}$ with replacement from a nonuniform probability distribution $(p_i)_{1 \leq i \leq n}$.
Then, it makes an update
\begin{equation*}
    \bm{x}_s = \bm{x}_{s-1} - \frac{\alpha}{|\textsf{S}|} \sum\nolimits_{j \in \textsf{S}} \frac{\bm{g}_{i,j}(\bm{x}_{s-1})}{p_j}.
\end{equation*}
Two types of importance sampling SGD are especially well-known.
Randomized coordinate descent \cite{leventhal2010randomized,qu2016coordinate} uses unequal sampling probabilities $p_j = \bm{A}(j, j) / \operatorname{tr}(\bm{A})$ to optimize the loss function $L_1$, while randomized Kaczmarz \cite{strohmer2008randomized,moorman2020randomized} uses unequal sampling probabilities $p_j = \lVert \bm{A}(j, \cdot) \rVert^2 / \lVert \bm{A} \rVert_{\rm F}^2$ to optimize the loss function $L_2$.

On the surface, SGD is similar to RSRI. 
The expected value of the update in SGD is:
\begin{align*}
    & \mathbb{E} \bigl[\bm{x}_s - \bm{x}_{s-1} | \bm{x}_{s-1}\bigr]
    = -\alpha \nabla L_1(\bm{x}_{s-1}) 
    = \alpha (\bm{b} - \bm{A} \bm{x}_{s-1}) \\
    \text{or} \quad  &
    \mathbb{E} \bigl[\bm{x}_s - \bm{x}_{s-1} | \bm{x}_{s-1}\bigr]
    = -\alpha \nabla L_2(\bm{x}_{s-1})
    = \alpha \bm{A}^* (\bm{b} - \bm{A} \bm{x}_{s-1}).
\end{align*}
RSRI makes exactly the same update in expectation when applied to $\alpha \bm{A} \bm{x} = \alpha \bm{b}$ or $\alpha \bm{A}^* \bm{A} \bm{x} = \alpha \bm{A}^* \bm{b}$.

Although the SGD and RSRI updates point in the same direction in expectation, the step size is chosen differently in these methods.
When RSRI is applied to $\alpha \bm{A} \bm{x} = \alpha \bm{b}$ or $\alpha \bm{A}^* \bm{A} \bm{x} = \alpha \bm{A}^* \bm{b}$, the step size can be any value $\alpha \in (0, 2 / \lVert \bm{A} \rVert)$ or $\alpha \in (0, 2 / \lVert \bm{A} \rVert^2)$ and still the algorithm converges under $\ell_1$ contractivity assumptions (\cref{thm:extended}).
In contrast, the step size needs to be much smaller for SGD methods with a constant batch size to converge.
For example, randomized coordinate descent \cite{leventhal2010randomized,qu2016coordinate} produces iterates that satisfy
\begin{multline}
    \label{eq:rcd_takedown}
        \mathbb{E} \lVert \bm{A}^{1/2} (\bm{x}_s - \bm{x}_\star) \rVert^2 = \mathbb{E} \lVert \bm{A}^{1/2} (\bm{x}_{s-1} - \bm{x}_\star) \rVert^2 \\
        + \biggl[ \frac{\alpha^2 \operatorname{tr}(\bm{A})}{|\textsf{S}|} - 2\alpha \biggr] \mathbb{E} \lVert \bm{A} (\bm{x}_{s-1} - \bm{x}_\star) \rVert^2 + \alpha^2 \frac{|\textsf{S}|-1}{|\textsf{S}|} \mathbb{E} \lVert \bm{A}^{3/2} (\bm{x}_{s-1} - \bm{x}_\star) \rVert^2.
\end{multline}
Meanwhile, randomized Kaczmarz \cite{strohmer2008randomized,moorman2020randomized} produces iterates that satisfy
\begin{multline}
\label{eq:rk_takedown}
    \mathbb{E} \lVert \bm{x}_s - \bm{x}_\star \rVert^2
    = \mathbb{E} \lVert \bm{x}_{s-1} - \bm{x}_\star \rVert^2 \\
    + \biggl[\alpha^2 \frac{\lVert \bm{A} \rVert_{\rm F}^2}{|\textsf{S}|} - 2 \alpha\biggr] \mathbb{E} \lVert \bm{A} (\bm{x}_{s-1} - \bm{x}_\star) \rVert^2
    + \alpha^2 \frac{|\textsf{S}|-1}{|\textsf{S}|} \mathbb{E} \lVert \bm{A}^* \bm{A} (\bm{x}_{s-1} - \bm{x}_\star) \rVert^2.
\end{multline}
See \cref{sec:discussion} for a derivation.
Thus, the SGD iterates diverge when $\alpha > 2 |\textsf{S}| / \operatorname{tr}(\bm{A})$ for randomized coordinate descent or $\alpha > 2 |\textsf{S}| / \lVert \bm{A} \rVert_{\rm F}^2$ for randomized Kaczmarz.
In uniform mini-batch SGD \cite{gower2019sgd,garrigos2024handbookconvergencetheoremsstochastic},
the step size parameter must be taken even smaller to prevent divergence.
Indeed, randomized Kaczmarz and randomized coordinate descent can be interpreted as optimized SGD methods that take maximally large steps while ensuring stability \cite{needell2015stochastic}.

In summary, RSRI uses a dimension-independent step size $\alpha \in (0, 2/\lVert \bm{A} \rVert)$ or $\alpha \in (0, 2/\lVert \bm{A} \rVert^2)$, which can be larger than the SGD step size $\alpha \in (0, 2|\textsf{S}|/ \operatorname{tr}(\bm{A}))$ or $\alpha \in (0, 2|\textsf{S}| / \lVert \bm{A} \rVert^2_{\rm F})$ by a factor as high as $n / |\textsf{S}|$.
The small step size in SGD leads to a slow, dimension-dependent convergence rate.
Indeed, optimizing the convergence rate in \cref{eq:rcd_takedown} or \cref{eq:rk_takedown} leads to bounds that are well-known in the case $|\textsf{S}| = 1$ \cite{strohmer2008randomized,leventhal2010randomized}
and hold if $(|\textsf{S}| - 1)\, (\lambda_{\max}(\bm{A}) - \lambda_{\min}(\bm{A})) \leq \operatorname{tr}(\bm{A})$
or $(|\textsf{S}| - 1)\,(\sigma_{\max}(\bm{A})^2 - \sigma_{\min}(\bm{A})^2) \leq \lVert \bm{A} \rVert_{\rm F}^2$:
\begin{align*}
    &\mathbb{E} \lVert \bm{A}^{1/2} (\bm{x}_s - \bm{x}_\star) \rVert^2 
    \leq \biggl[1 - \frac{|\textsf{S}|\, \lambda_{\min}(\bm{A})}{\operatorname{tr}(\bm{A}) + (|\textsf{S}| - 1)\, \lambda_{\min}(\bm{A})}
    \biggr] \mathbb{E} \lVert \bm{A}^{1/2} (\bm{x}_{s-1} - \bm{x}_\star) \rVert^2, \\
    \text{or} \quad
    &\mathbb{E} \lVert \bm{x}_s - \bm{x}_\star \rVert^2 
    \leq \biggl[1 - \frac{|\textsf{S}|\,\sigma_{\min}(\bm{A})^2}{\lVert \bm{A} \rVert_{\rm F}^2 + (|\textsf{S}| - 1)\,\sigma_{\min}(\bm{A})^2} \biggr] \mathbb{E} \lVert \bm{x}_{s-1} - \bm{x}_\star \rVert^2.
\end{align*}
See \cref{sec:discussion} for a derivation.
In both randomized coordinate descent and randomized Kaczmarz, the convergence rate is no faster than $(1 - |\textsf{S}| / n)^t$ and it is even slower when $\bm{A}$ is ill-conditioned with singular values of varying sizes.
Hence, SGD methods need to make many passes over the entries of $\bm{A}$ to obtain a high-accuracy solution.

From an information theoretic perspective, the different convergence rates in SGD and RSRI are due to different access models to the matrix.
SGD methods for linear systems are \emph{row access} methods, in which each update depends on a few selected rows of the equation $\bm{A} \bm{x} = \bm{b}$.
RSRI is a \emph{column access} method, in which each update depends on the output vector $\bm{b}$ and a few selected columns of $\bm{A}$.
Row access methods can outperform dense factorization strategies for solving linear systems \cite{derezinski2025randomizedkaczmarzmethodsbeyondkrylov,rathore2025askotchneatsolutionlargescale}, and they are especially useful for solving overdetermined least-squares problems with a small number of unknowns \cite{epperly2025randomizedkaczmarztailaveraging,gilyen2022improved,shao2022faster}.
However, in general, a row access method must access a number of rows proportional to the number of unknowns in the solution vector to produce a high-accuracy solution \cite{epperly2025randomizedkaczmarztailaveraging,chen2019active}.

% To give a simple, illustrative example, consider applying any SGD method from this section to the problem $\bm{A} \bm{x} = \bm{b}$ where $\bm{A} = \mathbf{I}$ and $\bm{b} = \bm{e}_1$.
% During the first $t$ iterations, SGD accesses a uniformly random subset of rows indexed by $\mathcal{T} \subseteq \{1, \ldots, n\}$, where some rows may be accessed repeatedly.
% Starting from the initial estimate $\bm{x}_0 = \bm{0}$, the SGD method produce the estimate $\bm{x}_t = \bm{0}$ as long as $1 \notin \mathcal{T}$.
% Therefore, the mean square error is bounded from below by
% \begin{equation*}
%     \mathbb{E} \bigl\lVert \bm{x}_t - \bm{x}_\star \bigr\rVert^2
%     \geq 1 \cdot \mathbb{P}\{1 \notin \mathcal{T}\}
%     = 1 - \frac{\mathbb{E} |\mathcal{T}|}{n},
% \end{equation*}
% where $\mathbb{E} |\mathcal{T}|$ is the expected number of rows accessed by the method.
% This dimension-dependent convergence rate cannot be easily improved without using additional information from the vector $\bm{b}$.
% In this simple, illustrative example, observe that RSRI converges in a single iteration from any starting condition.

\subsection{Deterministic sparse approximation} \label{sec:coordinate_pagerank}

The third approach to linear systems, which we call deterministic sparse iteration, takes advantage of information in the residual to determine which entries of the approximation to update.
For example, Cohen, Dahmen, and Devore \cite{cohen2001adaptive} project the linear system $\bm{A} \bm{x} = \bm{b}$ onto an \emph{active set} of indices and solve the reduced system exactly.
To determine the active set, their algorithm trades off between adding new indices corresponding to large-magnitude elements of the residual $\bm{r}(\hat{\bm{x}}) = \bm{b} - \bm{A} \hat{\bm{x}}$ and removing indices corresponding to low-magnitude entries of the approximate solution $\hat{\bm{x}}$.

As another example, the papers \cite{berkhin2006bookmark,andersen2007local} develop a deterministic sparse approximation for the PageRank problem $\bm{x} = \alpha \bm{P} \bm{x} + (1 - \alpha) \bm{s}$.
At each step, their method identifies a single index $i$ for which the residual vector 
\begin{equation*}
    \bm{r}(\hat{\bm{x}}) = 
    (1 - \alpha)\bm{s}
    - (\mathbf{I} - \alpha \bm{P}) \hat{\bm{x}}
\end{equation*}
is largest, and the algorithm updates $\hat{\bm{x}}(i) = \hat{\bm{x}}(i) + \bm{r}(\hat{\bm{x}})(i)$.
With this choice of coordinate-wise update, the residual vector remains nonnegative and the estimates $\hat{\bm{x}}$ are entry-wise increasing.
The algorithm ensures
\begin{equation}
\label{eq:bad_bound}
    \lVert \bm{r}(\hat{\bm{x}}) \rVert_{\infty} < \varepsilon, \quad 
    \text{after a maximum of $t = 1 / \varepsilon$ steps},
\end{equation}
and each step requires accessing just one column of $\bm{P}$ to update the residual.

Compared to the Monte Carlo strategies, the coordinate descent algorithm appears to converge at an accelerated $\mathcal{O}(t^{-1})$ rate, where $t$ denotes the number of update steps.
However, the convergence is measured in a less meaningful norm.
The error bound \cref{eq:bad_bound} only controls the quality of the residual; it does not control the quality of the solution directly.
Additionally, the $\ell_{\infty}$ error norm is smaller than the Euclidean norm.
The conversion factor can be as high as $\sqrt{n}$ given entries of the same magnitude, although it is smaller when the entries in the residual vector are rapidly decaying.

\begin{figure}[t]
    \centering
    \includegraphics[scale=.4]{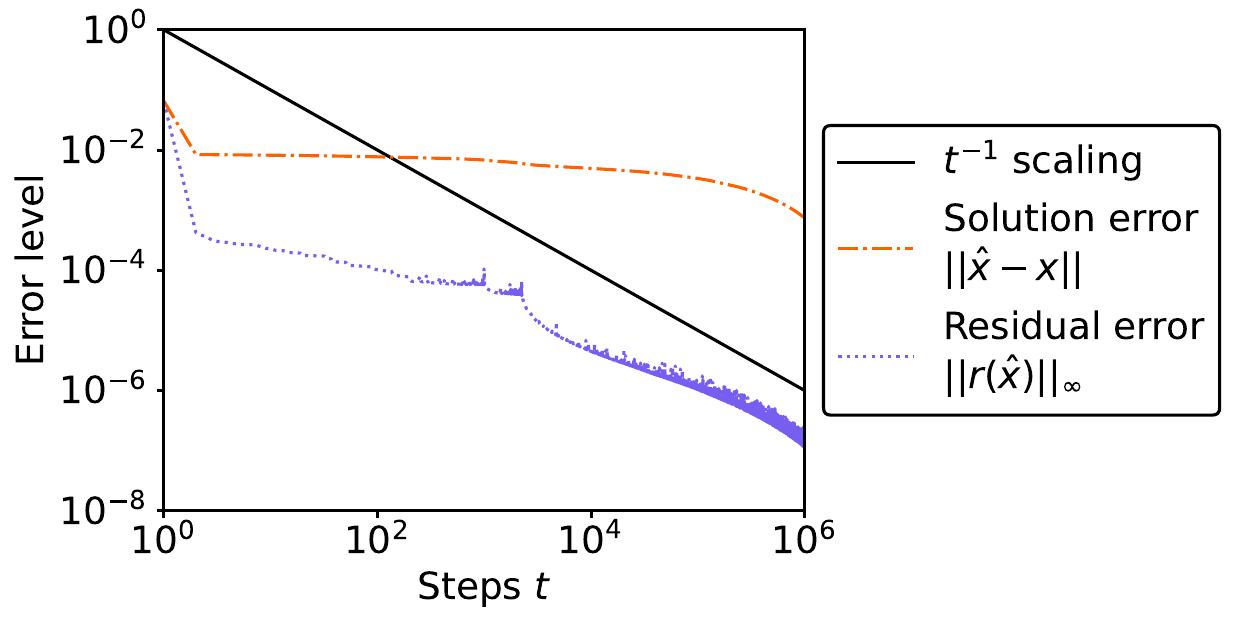}
    \caption{\textbf{(Slow convergence of coordinate descent).} 
    Error $\lVert \hat{\bm{x}} - \bm{x} \rVert$ and residual error $\lVert \bm{r}(\hat{\bm{x}}) \rVert_{\infty}$ for coordinate descent with $t$ update steps, compared to theoretical $t^{-1}$ scaling.
    In the figure, we apply coordinate descent to the Amazon electronics PageRank problem, as documented in \cref{sec:empirical}.}
    \label{fig:slow_deterministic}
\end{figure}
Empirically, coordinate descent leads to slow convergence for large-scale problems, as shown in \cref{fig:slow_deterministic}.
After running the algorithm for $t$ update steps on the Amazon electronics PageRank problem described in \cref{sec:empirical}, we confirm that the residual satisfies
$\lVert \bm{r}(\hat{\bm{x}}) \rVert_{\infty} = \mathcal{O}(t^{-1})$ (purple bottom line).
Nonetheless, the error stagnates when measured in the Euclidean norm $\lVert \hat{\bm{x}} - \bm{x} \rVert$ (orange top line).
The underlying problem is that the residual error is spread out over many entries, so each coordinate update produces just a tiny change in $\hat{\bm{x}}$.

% Last, there are deterministic PageRank algorithms that satisfy more satisfactory error bounds than \cref{eq:calculation,eq:bad_bound}, but at greater expense.
% The algorithms of \cite{sarlos2006to,andersen2007local} simultaneously solve for all $N$ personalized PageRank basis vectors at a competitive speed, but they cannot identify just a single personalized PageRank vector without accessing $\mathcal{O}(N)$ columns.
% The personalized PageRank algorithm of Gleich \& Polito \cite[Alg.~3]{gleich2006approximating} controls the $\ell_1$ error $\lVert \hat{\bm{x}} - \bm{x} \rVert_1$, but it requires accessing $\mathcal{O}(N)$ columns when applied to worst-case PageRank examples.

% The main difference between deterministic sparse iteration and RSRI is the procedure for adding and subtracting indices in the active set.
% In RSRI, indices are subtracted from the active set when each iterate is randomly sparsified.
% In contrast, deterministic sparse iteration either removes indices by setting the smallest-magnitude entries of each iterate to zero \cite{cohen2001adaptive}, or it does not remove indices at all \cite{berkhin2006bookmark,andersen2007local}.
% Because of the aggressive sparsification, RSRI is able to grow the active set by adding the complete residual vector to the sparsified iterate.
% In contrast, many deterministic sparse iteration methods choose elements of the residual vector selectively, which slows the convergence.
More generally, it remains unclear whether any deterministic sparse iteration method exhibits dimension-independent convergence for a wide class of linear systems.
The paper \cite{cohen2001adaptive} proves specific error bounds for matrices with exponentially decaying off-diagonal entries.
Yet RSRI satisfies more general and powerful error bounds that have not been shown to hold for any deterministic sparse algorithm.

\subsection{Fast randomized iteration} \label{sec:fri}

The work most related to RSRI is ``fast randomized iteration'' (FRI), which was proposed by Lim \& Weare \cite{lim2017fast} and later developed in the papers \cite{greene2019beyond,greene2020improved,greene2022approximating,greene2022full}.
FRI is an approach for speeding up linear or nonlinear fixed-point iterations
\begin{equation*}
    \bm{x}_s = \bm{f}(\bm{x}_{s-1})
\end{equation*}
by repeatedly applying random sparsification
\begin{equation*}
    \bm{x}_s = \bm{f}(\bm{\phi}_s (\bm{x}_{s-1})).
\end{equation*}
Here, $\bm{\phi}_s$ is a random sparsification operator, such as the pivotal sparsification operator (\cref{alg:optimal}).

FRI is an improvement and generalization of the ``full configuration interaction quantum Monte Carlo'' (FCIQMC) approach \cite{booth2009fermion,cleland2010survival} for solving eigenvalue problems in quantum chemistry.
In FCIQMC, random walkers interact in ``a game of life, death, and annihilation'' \cite{booth2009fermion} so that a weighted combination of basis vectors on the walkers' locations
approximates the leading eigenvector of a matrix.
FCIQMC has been applied to matrices as large as $10^{108} \times 10^{108}$ \cite{shepherd2012full}.
Such large matrices are possible because of the combinatorial explosion of basis elements, where each basis element represents an arrangement of electrons into spatial orbitals.
Remarkably, FCIQMC can produce sparse eigenvector approximations for these large systems with eigenvalue errors of just $0.02\%$ using $\mathcal{O}(10^8)$ random walkers \cite{shepherd2012full}.

In contrast to FCIQMC, FRI replaces most of the random operations with deterministic operations in order to improve the efficiency.
As a consequence, FRI produces a solution as accurate as FCIQMC but with a number of time steps that is reduced by a factor of $10^1$--$10^4$ \cite{greene2019beyond,greene2020improved}.
However, FRI has not yet been extended to linear systems.

Here, we extend FRI for the first time to solve linear systems by combining the approach with the deterministic Richardson iteration.
Additionally, we establish the first error bound that explains FRI's faster-than-$1/\sqrt{m}$ convergence rate (\cref{thm:main}), which has been observed in past studies \cite[Fig.~2]{greene2019beyond} but lacked a theoretical explanation.
Since the accelerated convergence rate is the main reason to use FRI in applications,
our work considerably extends and improves the past FRI analyses \cite{lim2017fast,lu2020full,greene2022approximating}.

\section{Random sparsification: design and analysis}
\label{sec:design}

In this section, our goal is to design a sparsification operator
$\bm{\phi}$ that yields small errors
as measured in an appropriate norm.  The results in this section are of interest beyond the specific setting of RSRI.  We address the general question of how to accurately and efficiently approximate a dense vector by a random sparse vector.

Lim \& Weare \cite{lim2017fast} argue that the most meaningful norm for analyzing sparsification error
is the ``triple norm'',
defined for any random vector $\bm{z} \in \mathbb{C}^n$ by 
\begin{equation*}
    \smalliii{\bm{z}} = \biggl(\max_{\lVert \bm{u} \rVert_{\infty} \leq 1} \mathbb{E} \bigl|\bm{u}^{\ast} \bm{z} \bigr|^2\biggr)^{1 / 2}.
\end{equation*}
The triple norm is defined by looking at the worst-case square inner product $\mathbb{E}\bigl|\bm{u}^{\ast} \bm{z}\bigr|^2$ with a vector $\bm{u} \in \mathbb{C}^n$ satisfying $\lVert \bm{u} \rVert_{\infty} \leq 1$.

When we make a random approximation $\hat{\bm{z}} \approx \bm{z}$, we can use the triple norm error $\vertiii{\hat{\bm{z}} - \bm{z}}$ to bound the error of any inner product with any vector $\bm{u} \in \mathbb{C}^n$.
When we approximate the dot product $\bm{u}^* \bm{z}$ with 
$\bm{u}^* \hat{\bm{z}}$, the error is bounded by
\begin{equation*}
    \mathbb{E} \bigl| \bm{u}^* ( \hat{\bm{z}} - \bm{z} ) \bigr|^2
    \leq \lVert \bm{u} \rVert_{\infty}^2 \cdot
    \smalliii{\hat{\bm{z}} - \bm{z}}^2.
\end{equation*}
The triple norm appears in the right-hand side of the error bound.
Additionally, we observe that the triple norm is always larger than the $L^2$ norm
\begin{equation*}
    \mathbb{E} \lVert \hat{\bm{z}} - \bm{z} \rVert^2 
    \leq \smalliii{\hat{\bm{z}} - \bm{z}}^2
\end{equation*}
(see \cref{lem:averaged}).
In this sense, obtaining bounds in the triple norm is more powerful than obtaining bounds in the conventional $L^2$ norm.

As we construct the sparsification operator $\bm{\phi}$, we ideally want the operator to satisfy the following design criteria when applied to any vector $\bm{v} \in \mathbb{C}^n$.
\begin{enumerate}
\item The sparsification should yield at most $m$ nonzero entries, i.e.,
$\lVert \bm{\phi}(\bm{v}) \rVert_0 \leq m$.
\item The sparsification should be unbiased, i.e.,
$\mathbb{E}\bigl[\bm{\phi} (\bm{v})\bigr] = \bm{v}$.
\item The error $\smalliii{\bm{\phi}(\bm{v}) - \bm{v}}$ should be as small as possible, subject to the sparsity and unbiasedness constraints.
\end{enumerate}
In this section, we do not quite succeed in identifying an optimal sparsification operator that satisfies properties 1-3.
However, we identify a ``pivotal'' sparsification operator
that satisfies properties 1-2 and nearly optimizes the triple norm error, up to a factor of $\sqrt{2}$.
See the following new result, which we will prove in \cref{sec:near_optimality}:
\begin{theorem}[Near-optimal error] \label{thm:advantages}
For any fixed vector $\bm{v} \in \mathbb{C}^n$ and any random vector $\bm{z} \in \mathbb{C}^n$ satisfying $\lVert \bm{z} \rVert_0 \leq m$ and $\mathbb{E}\bigl[\bm{z}\bigr] = \bm{v}$,
the pivotal sparsification operator satisfies
\begin{equation*}
    \vertiii{\bm{\phi}_{\rm piv}(\bm{v}) - \bm{v}} \leq \sqrt{2} \vertiii{\bm{Z} - \bm{v}}.
\end{equation*}
Additionally, the pivotal sparsification error is bounded by
\begin{equation}
\label{eq:amazing_bound}
    \vertiii{\bm{\phi}_{\rm piv}(\bm{v}) - \bm{v}}^2 \leq \min_{0 \leq i \leq m} \frac{2}{m - i} \biggl(\sum\nolimits_{j = i + 1}^n |\bm{v}^{\downarrow}(j)|\biggr)^2.
\end{equation}
\end{theorem}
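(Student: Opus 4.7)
The plan is to prove the explicit bound \cref{eq:amazing_bound} as a consequence of the near-optimality bound, and to prove near-optimality directly. For \cref{eq:amazing_bound}, I construct, for each integer $i \leq m$, an auxiliary unbiased $m$-sparse sparsifier $\bm{Z}_i$ that keeps the top $i$ entries of $\bm{v}^{\downarrow}$ exactly and distributes the remaining $m-i$ ``tokens'' by multinomial sampling from the probability distribution $q_j = |v_j^{\downarrow}|/S_i$ on $\{i+1, \ldots, N\}$, where $S_i = \sum_{j > i} |v_j^{\downarrow}|$; the multinomial counts $K_j$ are rescaled by $S_i/((m-i)|v_j^{\downarrow}|)$ to preserve unbiasedness. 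Using the multinomial covariance $\textup{Cov}(K_j, K_{j'}) = (m-i)(\delta_{jj'} q_j - q_j q_{j'})$ together with $\|\bm{f}\|_{\infty} \leq 1$ and $\sum_j q_j = 1$, a direct computation yields $\vertiii{\bm{Z}_i - \bm{v}}^2 \leq S_i^2/(m-i)$. Invoking the near-optimality bound then gives $\vertiii{\bm{\Phi}^{\textup{piv}}(\bm{v}) - \bm{v}}^2 \leq 2\vertiii{\bm{Z}_i - \bm{v}}^2 \leq 2 S_i^2/(m-i)$, and minimizing over $i$ recovers \cref{eq:amazing_bound}.

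For the near-optimality bound, the strategy is to sandwich the pivotal error. Fix any unbiased $\bm{Z}$ with $\|\bm{Z}\|_0 \leq m$ almost surely, and let $q_i = \Pr(Z_i \neq 0)$, so that $\sum_i q_i \leq m$. Cauchy--Schwarz applied to $|v_i|^2 = |\mathbb{E}[Z_i \mathbf{1}(Z_i \neq 0)]|^2 \leq \mathbb{E}|Z_i|^2 \cdot q_i$ gives $\textup{Var}(Z_i) \geq |v_i|^2(1-q_i)/q_i$. Averaging $\mathbb{E}|\bm{f}^*(\bm{Z}-\bm{v})|^2$ over i.i.d.~Rademacher choices of $\bm{f}$ (which satisfy $\|\bm{f}\|_{\infty} = 1$) annihilates the cross-covariance terms and produces the lower bound $\vertiii{\bm{Z}-\bm{v}}^2 \geq \sum_i \textup{Var}(Z_i) \geq \sum_i |v_i|^2(1-q_i)/q_i$. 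A Lagrangian analysis of the convex program $\min \sum_i |v_i|^2/p_i$ subject to $\sum_i p_i \leq m$ and $p_i \leq 1$ identifies the pivotal probabilities $p_i^* = \min(c|v_i|, 1)$ (with $c$ chosen so $\sum_i p_i^* = m$) as the unique minimizers, producing $\sum_i |v_i|^2(1 - p_i^*)/p_i^* \leq \vertiii{\bm{Z} - \bm{v}}^2$.

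The main obstacle is establishing the matching upper bound $\vertiii{\bm{\Phi}^{\textup{piv}}(\bm{v}) - \bm{v}}^2 \leq 2 \sum_i |v_i|^2(1 - p_i^*)/p_i^*$; this factor of $2$ is exactly what produces the $\sqrt{2}$ in the theorem. The pivotal indicator variables are negatively associated, which gives the Poisson-sampling variance bound $\textup{Var}(\sum_i a_i \mathbf{1}_i) \leq \sum_i a_i^2 p_i^*(1-p_i^*)$ whenever the coefficients $a_i$ are all nonnegative, but the worst-case $\bm{f}$ in the triple norm may produce mixed signs. To handle arbitrary $\bm{f}$ with $\|\bm{f}\|_{\infty} \leq 1$, I decompose $\mathbb{E}|\bm{f}^*(\bm{\Phi}^{\textup{piv}}(\bm{v}) - \bm{v})|^2$ into the variances of its real and imaginary parts, and split each real linear combination $\sum_i a_i (v_i / p_i^*) \mathbf{1}_i$ into its positive- and negative-coefficient pieces. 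The elementary inequality $\textup{Var}(X - Y) \leq 2\,\textup{Var}(X) + 2\,\textup{Var}(Y)$, combined with the identity $(a_i)_+^2 + (a_i)_-^2 = a_i^2$, then absorbs the sign cancellation at the cost of an overall factor of $2$, which is precisely the source of $\sqrt{2}$ in the stated theorem.
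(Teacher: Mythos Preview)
Your near-optimality argument is correct and essentially identical to the paper's: both proofs combine (i) pairwise negative correlation of pivotal indicators, (ii) a sign-splitting of $\bm{f}^*\bm{\Phi}^{\rm piv}(\bm{v})$ into same-sign pieces together with $\textup{Var}(X+Y)\leq 2\textup{Var}(X)+2\textup{Var}(Y)$ to produce the factor $2$, (iii) the Lagrangian/KKT characterization of the optimal marginal probabilities, and (iv) the Rademacher-averaging inequality $\mathbb{E}\lVert\bm{Z}-\bm{v}\rVert^2\leq\smalliii{\bm{Z}-\bm{v}}^2$. The paper packages (iii)--(iv) as a separate $L^2$-optimality proposition and a norm-comparison lemma, but the content is the same chain of inequalities you wrote down.

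The one genuine difference is your derivation of the explicit bound \cref{eq:amazing_bound}. The paper obtains it by multiplying the explicit $L^2$ bound $\mathbb{E}\lVert\bm{\Phi}^{\rm piv}(\bm{v})-\bm{v}\rVert^2\leq\min_{i\leq m}\frac{1}{m-i}\bigl(\sum_{j>i}|v_j^\downarrow|\bigr)^2$ by $2$; proving that $L^2$ bound requires a short monotonicity argument showing the upper envelope $g(q)=\frac{1}{m-q}\bigl(\sum_{j>q}|v_{\beta(j)}|\bigr)^2$ is minimized at some $q\geq q_\ast$. You instead bypass this by constructing, for each $i$, a concrete multinomial competitor $\bm{Z}_i$ whose triple-norm error is at most $S_i^2/(m-i)$, and then invoking near-optimality. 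Your route is slightly more self-contained (it needs only the near-optimality statement plus a two-line multinomial variance calculation), whereas the paper's route yields the sharper $L^2$ bound without the factor $2$ as a by-product, which it reuses elsewhere. Both are valid and of comparable length.
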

As a result of \cref{thm:advantages}, pivotal sparsification hardly incurs any error if we can arrange the entries $\bm{v}(i)$ so they are decreasing rapidly in magnitude.

The rest of the section is organized as follows.
\Cref{sec:pseudocode} presents pseudocode for pivotal sparsification, \cref{sec:optimality} proves the optimality of pivotal sparsification in the $L^2$ norm,
and \cref{sec:near_optimality} proves near-optimality of pivotal sparsification in the triple norm.

\subsection{Pseudocode}
\label{sec:pseudocode}

\Cref{alg:optimal} describes the pivotal sparsification algorithm, which takes as input a vector $\bm{v} \in \mathbb{C}^n$ and outputs a sparse random vector $\bm{\phi}_{\rm piv}(\bm{v}) \in \mathbb{C}^n$.
The algorithm begins by identifying a set of indices $\textsf{D} \subseteq \{1, \ldots, N\}$ that indicate the largest-magnitude elements of $\bm{v}$.
For simplicity, the pseudocode uses a simple recursive approach for constructing $\textsf{D}$ by adding just one index $i$ at a time.
To improve the efficiency, we can modify \cref{alg:optimal} by adding multiple indices at a time or by preprocessing the input vector to identify the $m$ largest-magnitude entries \cite{floyd64algorithm}.
% which reduces the cost of pivotal sparsification to $\mathcal{O}(N + m \log N)$ floating point operations \cite{floyd64algorithm}.

\begin{algorithm}[t]
\caption{Pivotal sparsification \cite{greene2022approximating}} \label{alg:optimal}
\begin{algorithmic}[1]
\Require Vector $\bm{v} \in \mathbb{C}^n$, sparsity parameter $m$
\Ensure Vector $\bm{\phi}(\bm{v}) \in \mathbb{C}^n$ with no more than $m$ nonzero entries that satisfies $\mathbb{E}\bigl[\bm{\phi}(\bm{v})\bigr] = \bm{v}$, $\mathbb{E} |\bm{\phi}(\bm{v})| = |\bm{v}|$, and $\lVert \bm{\phi}(\bm{v}) \rVert_1 = \lVert \bm{v} \rVert_1$.
\State $\mathsf{D} = \emptyset$
\State $q = 0$
\If{there exists $i \notin \textsf{D}$ such that $|\bm{v}(i)| \geq \frac{1}{m - q} \sum\nolimits_{j \notin \textsf{D}} |\bm{v}(j)|$}
\State $\textsf{D} = \textsf{D} \cup \{i\}$
\State $q = q + 1$
\EndIf
\State Define $\bm{p} \in [0, 1]^n$ with
{\setlength{\abovedisplayskip}{3pt}
\setlength{\belowdisplayskip}{3pt}
\begin{equation*}
    \bm{p}(i) = 
    \begin{cases}
        1, & i \in \textsf{D}, \\
        (m - q) \cdot |\bm{v}(i)| \big\slash \sum\nolimits_{j \notin \textsf{D}} |\bm{v}(j)|, & i \notin \textsf{D}.
    \end{cases}
\end{equation*}
}
\State $\textsf{S} = \texttt{sample}(\bm{p})$
\Comment{Sample using \cref{alg:pivotal}}
\State Return $\bm{\phi}(\bm{v}) \in \mathbb{C}^n$ with
{\setlength{\abovedisplayskip}{3pt}
\setlength{\belowdisplayskip}{3pt}
\begin{equation*}
    \bm{\phi}(\bm{v})(i) =
    \begin{cases}
        \bm{v}(i), & i \in \textsf{D}, \\
        \bm{v}(i) / \bm{p}(i), & i \in \textsf{S}, \\
        0, & i \notin \textsf{D} \cup \textsf{S}.
    \end{cases}
\end{equation*}
}
\end{algorithmic}
\end{algorithm}

After identifying the largest-magnitude entries,
we preserve these entries exactly, that is, we set $\bm{\phi}_{\rm piv}(\bm{v})(i) = \bm{v}(i)$ for $i \in \textsf{D}$.
In contrast, we randomly perturb the smallest-magnitude entries by setting them to zero or by increasing their magnitudes randomly.
We choose which nonzero entries to retain using \emph{pivotal sampling} \cite{deville1998unequal},
a stochastic rounding strategy that takes as input a vector of selection probabilities $\bm{p} \in [0, 1]^n$
and rounds each entry $\bm{p}(i)$ to $0$ or $1$ in an unbiased way, as described in \cref{alg:pivotal}.
Pivotal sampling can be implemented using a single pass through the vector of probabilities.
All of the indices $i \notin \textsf{S}$ that are selected by pivotal sampling are raised to an equal magnitude, which is deterministically chosen to preserve the $\ell_1$-norm of the input, $\lVert \bm{\phi}_{\rm piv}(\bm{v}) \rVert_1 = \lVert \bm{v} \rVert_1$.

\begin{algorithm}[t]
\caption{Pivotal sampling \cite{deville1998unequal}}
\label{alg:pivotal}
\begin{algorithmic}[1]
\Require Vector $\bm{p} \in [0, 1]^n$ with entries that sum to an integer $m = \sum\nolimits_{i=1}^n \bm{p}(i)$
\Ensure Random set of indices $\textsf{S} \subseteq \{1, \ldots, n\}$ with $\mathbb{P}\{i \in \textsf{S}\} = \bm{p}(i)$
\State Set $\textsf{S} = \emptyset$, $b = 0$, $\ell = 0$, $f = 1$
\For{$i = 1, \ldots, m$}
\State $u = \max\{k: b + \sum\nolimits_{j=f}^k \bm{p}(j) < 1\}$
\State Sample $h \in \{\ell, f, f + 1, \ldots, u\}$ with probs. prop. to $(b, \bm{p}(f), \bm{p}(f+1), \ldots, \bm{p}(u))$
\State $b = b + \sum\nolimits_{j=f}^{u+1} \bm{p}(j) - 1$
\State Set $\ell = h$ with probability $(\bm{p}(u+1) - b) / (1 - b)$
\If{$\ell = h$}
\State $\textsf{S} = \textsf{S} \cup \{u+1\}$
\Else
\State $\textsf{S} = \textsf{S} \cup \{h\}$
\EndIf
\State $f = u + 2$
\EndFor
\State Return $\textsf{S}$
\end{algorithmic}
\end{algorithm}

% Last, we record a few facts about pivotal sparsification for future reference.
% In addition to preserving the vector in expectation, $\mathbb{E}\bigl[\bm{\phi}_{\rm piv}(\bm{v})\bigr] = \bm{v}$,
% it also preserves the absolute values of the entries in expectation,
% $\mathbb{E}\bigl|\bm{\phi}_{\rm piv}(\bm{v})\bigr| = \bigl|\bm{v}\bigr|$.
% Last,
% it preserves the one-norm with probability one, $\bigl\lVert \bm{\phi}_{\rm piv}(\bm{v})\bigr\rVert_1 = \lVert \bm{v} \rVert_1$.

\subsection{Optimality in the \texorpdfstring{$L^2$}{L2} norm}
\label{sec:optimality}

Here, we prove that pivotal sparsification is optimal at controlling error in the $L^2$ norm.
We previously reported a weaker result for pivotal sparsification in \cite[Prop.~5.2]{greene2022approximating}.
By reworking the proof, we are able to obtain a more explicit bound \cref{eq:variance_bound}, which will be used for bounding the RSRI variance in \cref{sec:variance_bounds}.

\begin{proposition}[Optimal $L^2$ error] \label{prop:relaxed}
For any vector $\bm{v} \in \mathbb{C}^n$ and any random vector $\bm{z} \in \mathbb{C}^n$ satisfying $\lVert \bm{z} \rVert_0 \leq m$ and $\mathbb{E}\bigl[\bm{z}\bigr] = \bm{v}$,
it holds that
\begin{equation*}
    \mathbb{E}\lVert \bm{\phi}_{\rm piv}(\bm{v}) - \bm{v} \rVert^2
    \leq \mathbb{E} \lVert \bm{z} - \bm{v} \rVert^2.
\end{equation*}
Additionally, pivotal sparsification satisfies the error bound
\begin{equation}
\label{eq:variance_bound}
    \mathbb{E}\lVert \bm{\phi}_{\rm piv}(\bm{v}) - \bm{v} \rVert^2
    \leq \min_{0 \leq i \leq m} \frac{1}{m - i} \biggl(\sum\nolimits_{j = i + 1}^n |\bm{v}^{\downarrow}(j)|\biggr)^2.
\end{equation}
\end{proposition}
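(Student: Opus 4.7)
My plan is to reduce both claims to a convex optimization over coordinate-wise inclusion probabilities. Given any unbiased $\bm{Z}$ with $\lVert\bm{Z}\rVert_0 \leq m$, set $q_j = \mathbb{P}(Z_j \neq 0)$. Cauchy--Schwarz gives
\begin{equation*}
|v_j|^2 = \bigl\lvert\mathbb{E}\bigl[Z_j \mathds{1}_{\{Z_j \neq 0\}}\bigr]\bigr\rvert^2 \leq q_j\,\mathbb{E}|Z_j|^2,
\end{equation*}
so $\mathbb{E}|Z_j - v_j|^2 \geq |v_j|^2 (1/q_j - 1)$. Summing over $j$ and using $\sum_j q_j = \mathbb{E}\lVert\bm{Z}\rVert_0 \leq m$ produces
\begin{equation*}
\mathbb{E}\lVert\bm{Z} - \bm{v}\rVert^2 \ \geq\ \min_{q \in [0,1]^N,\, \sum_j q_j \leq m}\ \sum_{j=1}^N |v_j|^2\bigl(1/q_j - 1\bigr).
\end{equation*}

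To prove the optimality claim, I would verify that pivotal sparsification attains this lower bound. The program is convex in $q$, and its KKT conditions force the optimum into the form $q_j^{\star} = \min\{1,\, c|v_j|\}$ for the unique $c > 0$ with $\sum_j q_j^{\star} = m$. Inspecting \cref{alg:optimal}, these $q_j^{\star}$ coincide with the marginal inclusion probabilities used by pivotal sparsification: $p_j = 1$ on the deterministic set $\textsf{D}$ and $p_j = (m - |\textsf{D}|)|v_j|\big/\sum_{k \notin \textsf{D}} |v_k|$ otherwise, with the pivotal threshold $\sum_{k \notin \textsf{D}} |v_k|/(m-|\textsf{D}|)$ matching $1/c$ from the KKT analysis. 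Because pivotal sampling realizes $\mathbb{P}(j \in \textsf{S}) = p_j$ as a marginal, a direct variance computation yields $\mathbb{E}|\Phi_j^{\rm piv}(\bm{v}) - v_j|^2 = |v_j|^2(1/p_j - 1)$ on $\textsf{D}^c$ and zero on $\textsf{D}$; summing shows that pivotal attains the lower bound, which gives the first claim.

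For the explicit bound \cref{eq:variance_bound}, I would combine the optimality just proved with a family of test estimators. For each $i \leq m-1$, let $\bm{Z}^{(i)}$ preserve the $i$ largest-magnitude entries of $\bm{v}$ exactly and apply pivotal sparsification to the remaining $N - i$ entries with budget $m - i$. Then $\bm{Z}^{(i)}$ is unbiased with at most $m$ nonzero entries, so by the first claim $\mathbb{E}\lVert\bm{\Phi}^{\rm piv}(\bm{v}) - \bm{v}\rVert^2 \leq \mathbb{E}\lVert\bm{Z}^{(i)} - \bm{v}\rVert^2$. It therefore suffices to establish the uniform budget bound that pivotal sparsification of any vector $\bm{u}$ with budget $k$ has $L^2$ error at most $\bigl(\sum_j |u_j|\bigr)^2 / k$; applying this to the bottom $N - i$ coordinates with $k = m - i$ and minimizing over $i$ yields \cref{eq:variance_bound}.

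The main obstacle is this budget bound, because the na\"ive probabilities $k|u_j|/\sum_k|u_k|$ can exceed one. Writing $A = \sum_{j \notin \textsf{D}}|u_j|$, $B = \sum_{j \in \textsf{D}}|u_j|$, and $d = |\textsf{D}|$ for the sets built by pivotal, the coordinate-wise variance computation from the first part gives pivotal variance equal to $A^2/(k-d) - \sum_{j \notin \textsf{D}}|u_j|^2 \leq A^2/(k-d)$, so the goal reduces to $k A^2 \leq (k-d)(A+B)^2$. A short induction on the construction of $\textsf{D}$ shows that each $|u_j|$ with $j \in \textsf{D}$ exceeds the final pivot threshold $A/(k-d)$, so $B \geq d A/(k-d)$ and hence $A + B \geq k A/(k-d)$; squaring and multiplying by $k - d$ closes the argument.
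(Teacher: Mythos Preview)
Your proof is correct. For the optimality claim, your argument is essentially the paper's: both reduce to minimizing $\sum_j |v_j|^2(1/q_j - 1)$ over $q\in[0,1]^N$ with $\sum_j q_j \leq m$, solve by KKT to obtain $q_j^\star = \min\{1,c|v_j|\}$, and identify this with the pivotal inclusion probabilities. You reach the coordinate-wise lower bound via Cauchy--Schwarz on $\mathbb{E}[Z_j \mathds{1}\{Z_j \neq 0\}]$, whereas the paper uses the conditional-variance decomposition with respect to $N_j = \mathds{1}\{Z_j \neq 0\}$; these yield the same inequality.

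For the explicit bound \eqref{eq:variance_bound} your route genuinely differs. The paper works directly with the optimal value as a function of the preservation count $q$: it shows $f(q)=\frac{1}{m-q}\bigl(\sum_{j>q}|v_j^\downarrow|\bigr)^2-\sum_{j>q}|v_j^\downarrow|^2$ is nondecreasing, that the envelope $g(q)=\frac{1}{m-q}\bigl(\sum_{j>q}|v_j^\downarrow|\bigr)^2$ satisfies $g(q+1)\leq g(q)$ whenever the threshold condition fails, and hence that the minimizer of $g$ lies at or beyond the pivotal threshold $q_\ast$, giving $f(q_\ast)\leq \min_q g(q)$. You instead use the optimality just proved as a comparison principle: the test estimators $\bm{Z}^{(i)}$ reduce everything to the single ``budget bound'' that pivotal on any $(\bm{u},k)$ has $L^2$ error at most $\bigl(\sum_j|u_j|\bigr)^2/k$, which you verify via the threshold-monotonicity inequality $B\geq dA/(k-d)$. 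Both arguments ultimately hinge on the same monotonicity of the pivotal threshold, but yours is more modular (the budget bound is a clean reusable lemma and you never need the monotonicity of $f$), while the paper's direct analysis delivers the finer structural statement that $f(q)$ itself is nondecreasing in $q$.
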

\begin{proof}[Proof of \cref{prop:relaxed}]

The proof is based on an explicit minimization of the square $L^2$ error $\mathbb{E}\lVert \bm{z} - \bm{v} \rVert^2$, subject to the constraints.
To that end, we introduce the random vector $\bm{m} \in \{0, 1\}^n$ with entries 
$\bm{m}(i) = \mathds{1}\{\bm{z}(i) \neq 0\}$.
The entries $\bm{m}(i)$ are thus dependent Bernoulli random variables with success probabilities $\mathbb{E}\bigl[\bm{m}(i)\bigr]$.
Moreover, the constraint 
$\lVert \bm{z} \rVert_0 \leq m$
implies that $\sum\nolimits_{i=1}^n \bm{m}(i) \leq m$,
and the constraint $\mathbb{E} \bigl[\bm{Z}\bigr] = \bm{v}$ implies that
\begin{equation*}
    \bm{v}(i) = \mathbb{E} \bigl[ \bm{z}(i) \bigr]
    = \mathbb{E}\bigl[ \bm{z}(i)\,\big|\, \bm{m}(i) = 1\bigr] \cdot \mathbb{E} \bigl[ \bm{m}(i) \bigr],
    \quad \text{for each } 1 \leq i \leq n.
\end{equation*}
Hence, we can decompose the square $L^2$ error as
\begin{align*}
    \mathbb{E}\lVert \bm{z} - \bm{v} \rVert^2
    &=
    \sum\nolimits_{i = 1}^n \mathbb{E}\bigl|\bm{z}(i)
    - \mathbb{E}\bigl[\bm{z}(i)\,\big|\, \bm{m}(i)\bigr]\bigr|^2
    + \sum\nolimits_{i = 1}^n \mathbb{E}\bigl|\mathbb{E}\bigl[\bm{z}(i)\,\big|\, \bm{m}(i)\bigr] - \bm{v}(i)\bigr|^2 \\
    &=    
    \sum\nolimits_{i=1}^n \mathbb{E}\biggl|\bm{z}(i) - \frac{\bm{m}(i)}{\mathbb{E} \bigl[\bm{m}(i)\bigr] } \bm{v}(i) \biggr|^2 
    + \sum\nolimits_{i=1}^n \mathbb{E}\biggl|\frac{\bm{m}(i)}{\mathbb{E} \bigl[\bm{m}(i)\bigr] } \bm{v}(i) - \bm{v}(i) \biggr|^2.
\end{align*}
We minimize the square $L^2$ error by taking by taking $\bm{z}(i) = \bm{v}(i) \cdot \bm{m}(i) / \,\mathbb{E}\bigl[\bm{m}(i)\bigr]$ for each $1 \leq i \leq n$,
so the first term vanishes and the square $L^2$ error becomes
\begin{equation*}
    \mathbb{E}\lVert \bm{z} - \bm{v} \rVert^2
    = \sum\nolimits_{i=1}^n \mathbb{E}\biggl|\frac{\bm{m}(i)}{\mathbb{E} \bigl[\bm{m}(i)\bigr] } \bm{v}(i) - \bm{v}(i) \biggr|^2
    = \sum\nolimits_{i=1}^n |\bm{v}(i)|^2 \biggl(\frac{1}{\mathbb{E} \bigl[\bm{m}(i)\bigr] } - 1 \biggr).
\end{equation*}
Next, determine the optimal vector of success probabilities $\mathbb{E} \bigl[\bm{m}\big] = \bm{p} \in [0, 1]^n$ by minimizing the objective function
\begin{equation}
\label{eq:convex}
    f(\bm{p})
    = \sum\nolimits_{i=1}^n |\bm{v}(i)|^2 \biggl(\frac{1}{\bm{p}(i)} - 1 \biggr),
\end{equation}
subject to the constraints $\sum\nolimits_{i=1}^n \bm{p}(i) \leq m$ and $\bm{p} \in [0, 1]^n$.

We observe that $\bm{p} \mapsto f(\bm{p})$
is a convex mapping on a closed, convex set.
To find the global minimizer, 
we introduce the Lagrangian function
\begin{equation*}
    L\bigl(\bm{p}, \eta, \bm{\lambda}\bigr)
    = \sum\nolimits_{i=1}^n |\bm{v}(i)|^2 \biggl(\frac{1}{\bm{p}(i)} - 1 \biggr)
    + \eta \biggl(\sum\nolimits_{i=1}^n \bm{p}(i) - m\biggr) + \sum\nolimits_{i=1}^n \bm{\lambda}(i) \bigl(\bm{p}(i) - 1\bigr).
\end{equation*}
By the Karush–Kuhn–Tucker conditions \cite[pg.~321]{nocedal2006numerical},
the minimizer of \cref{eq:convex} must satisfy
$\frac{\partial L}{\partial \bm{p}(i)} = 0$, which leads to
\begin{equation*}
    \bm{p}(i) = \frac{|\bm{v}(i)|}{(\eta + \bm{\lambda}(i))^{1 / 2}}, \quad 1 \leq i \leq n,
\end{equation*}
If $\eta = 0$, the complementarity condition $\bm{\lambda}(i) (\bm{p}(i) - 1) = 0$ implies
that 
\begin{equation}
\label{eq:trivial}
    \bm{p}(i) = 1, \quad \text{for each } 1 \leq i \leq n.
\end{equation}
If $\eta > 0$, the complementarity condition implies that
\begin{equation}
\label{eq:combine_with}
    \bm{p}(i) = \min\biggl\{\frac{|\bm{v}(i)|}{\eta^{1/2}}, 1 \biggr\}, \quad \text{for each } 1 \leq i \leq n.
\end{equation}
In this case, the additional complementarity condition
$\eta\, (\sum\nolimits_{i=1}^n \bm{p}(i) - m) = 0$ implies that
$\sum\nolimits_{i=1}^n \bm{p}(i) = m$.

To better understand the minimizer, introduce a permutation $\sigma_1, \ldots, \sigma_n$ of the indices $1, \ldots, n$ such that
$|\bm{v}(\sigma_1)| \geq \cdots \geq |\bm{v}(\sigma_n)|$,
In light of \cref{eq:trivial,eq:combine_with}, there must be an exact preservation threshold $q \in 0, \ldots, n$ such that
\begin{align*}
    & \bm{p}(\sigma_i) = 1, 
    && i \leq q, && \\
    & \bm{p}(\sigma_i) =
    \frac{(m - q) |\bm{v}(\sigma_i)|}
    {\sum\nolimits_{j = q + 1}^n |\bm{v}(\sigma_j)|} < 1,
    && q + 1 \leq i \leq n. &&
\end{align*}
The objective function \cref{eq:convex} can be rewritten as
\begin{equation}
\label{eq:objective}
    f(q) 
    = \frac{1}{m-q} \Bigl(\sum\nolimits_{i = q + 1}^n |\bm{v}(\sigma_i)|\Bigr)^2
    - \sum\nolimits_{i = q + 1}^n |\bm{v}(\sigma_i)|^2,
\end{equation}
and we can find the optimal $q$ by minimizing \cref{eq:objective} subject to the constraint
\begin{equation*}
    |\bm{v}(\sigma_{q+1})| < \frac{1}{m - q} \sum\nolimits_{i = q + 1}^n |\bm{v}(\sigma_i)|.
\end{equation*}
A direct computation reveals
\begin{equation*}
    f(q + 1) - f(q)
    = \frac{m-q}{m-q-1} \biggl(|\bm{v}(\sigma_{q+1})| - \frac{1}{m-q} \sum\nolimits_{i=q+1}^n |\bm{v}(\sigma_i)|\biggr)^2 \geq 0,
\end{equation*}
wherefore the objective function $f(q)$ is nondecreasing in $q$. The optimal threshold is
\begin{equation}
\label{eq:characterization}
    q_\ast = \min\Bigl\{0 \leq q \leq m:
    |\bm{v}(\sigma_{q+1})| < \frac{1}{m - q} \sum\nolimits_{i = q + 1}^n |\bm{v}(\sigma_i)|
    \Bigr\}.
\end{equation}
This matches the description of pivotal sparsification given in \cref{alg:pivotal}.

Last, to bound the square $L^2$ error of pivotal sparsification, we introduce
\begin{equation*}
    g(q) 
    =
    \frac{1}{m-q} \biggl(\sum\nolimits_{i = q + 1}^n |\bm{v}(\sigma_i)|\biggr)^2
\end{equation*}
which is an upper bound on the objective function $f(q)$ \cref{eq:objective}.
A direct calculation shows that
\begin{equation*}
    g(q) - g(q + 1)
    = \biggl(\frac{1}{m-q-1} \sum\nolimits_{i=q+2}^n |\bm{v}(\sigma_i)|\biggr) \biggl[ |\bm{v}(\sigma_{q+1})| - \frac{1}{m-q} \sum\nolimits_{i=q+1}^n |\bm{v}(\sigma_i)| \biggr].
\end{equation*}
Hence, incrementing $q$ will reduce or leave unchanged the value of $g(q)$ as long as
\begin{equation}
\label{eq:is_satisfied}
    | \bm{v}(\sigma_{q+1}) | \geq \frac{1}{m - q}
    \sum\nolimits_{i=q+1}^n |\bm{v}(\sigma_i)|
\end{equation}
is satisfied.
From the characterization \cref{eq:characterization}, there must be a minimizer $\tilde{q}$ of $g(q)$ satisfying $q_\ast \leq \tilde{q}$.
Since the objective function $f(q)$ is nondecreasing, we conclude
\begin{equation*}
    f(q_\ast) \leq f(\tilde{q}) \leq g(\tilde{q}) = \min_{0 \leq q \leq m} g(q).
\end{equation*}
We rewrite this inequality as
\begin{equation*}
    \mathbb{E}\lVert \bm{\phi}_{\rm piv}(\bm{v}) - \bm{v} \rVert^2
    \leq
    \min_{0 \leq i \leq m} \frac{1}{m - i} \biggl(\sum\nolimits_{j = i + 1}^n |\bm{v}^{\downarrow}(j)|\biggr)^2,
\end{equation*}
to complete the proof.
\end{proof}

\subsection{Near-optimality in the triple norm} \label{sec:near_optimality}

In this section, we prove that pivotal sparsification is nearly optimal at controlling error in the triple norm. This result is entirely new.

To prove the near-optimality result, we need two lemmas.
First, we prove a helpful comparison between the $L^2$ norm and the triple norm.
A similar result was presented in \cite[Eq.~27]{lim2017fast}, but the argument here is simpler.

\begin{lemma}[Difference in norms] \label{lem:averaged}
Fix any random vector $\bm{z} \in \mathbb{C}^n$, and let $\bm{u} \in \mathbb{C}^n$ be an independent random vector with entries that are independent and uniformly distributed on the complex unit circle, so $|\bm{u}(1)| = \cdots = |\bm{u}(n)| = 1$.
It holds that
\begin{equation*}
    \mathbb{E}\lVert \bm{z} \rVert^2 
    = \mathbb{E}\bigl|\bm{u}^\ast \bm{z} \bigr|^2
    \quad \text{and} \quad
    \smalliii{\bm{z}}^2 = \max_{|\bm{v}(1)| = \cdots = |\bm{v}(n)| = 1} \mathbb{E}| \bm{v}^{\ast} \bm{z} |^2.
\end{equation*}
Consequently, $\mathbb{E} \lVert \bm{z} \rVert^2 \leq \smalliii{\bm{z}}^2$.
\end{lemma}
\begin{proof}
By direct calculation
\begin{equation*}
    \mathbb{E} \lVert \bm{z} \rVert^2 
    = \sum\nolimits_{i=1}^n \mathbb{E}\bigl[\overline{\bm{z}(i)} \bm{z}(i)\bigr]
    = \sum\nolimits_{i,j=1}^n \mathbb{E} \bigl[ \overline{\bm{u}(i) \bm{z}(i)} \bm{u}(j) \bm{z}(j)\bigr]
    = \mathbb{E}\bigl|\bm{u}^{\ast} \bm{z}\bigr|^2.
\end{equation*}
This calculation uses the fact that $\bm{u}(i)$ are uncorrelated, mean-zero, variance-one random variables.

Next, since $\bm{v} \mapsto \mathbb{E} |\bm{v}^{\ast} \bm{z} |^2$ is a convex function, the maximum $\max_{\lVert \bm{v} \rVert_{\infty} \leq 1} \mathbb{E} |\bm{v}^\ast \bm{z} |^2$ is achieved on an extreme point of the closed convex set $\{\bm{v} \in \mathbb{C}^n \colon \lVert \bm{v} \rVert_{\infty} \leq 1\}$.
The extreme points are vectors $\bm{v} \in \mathbb{C}^n$ with entries on the complex unit circle, $|\bm{v}(1)| = \cdots = |\bm{v}(n)| = 1$.

The comparison between $\mathbb{E} \lVert \bm{z} \rVert^2$ and $\smalliii{\bm{z}}^2$ then follows, since the inner product of $\bm{z}$ with a random vector $\bm{u}$ has a smaller mean square magnitude than the inner product with a worst-case vector $\bm{v}$.
\end{proof}

Next, we recall a lemma establishing the negative correlations of pivotal sampling, which was proved by Srinivasan \cite{srinivasan2001distributions}.

\begin{lemma}[Negative correlations \cite{srinivasan2001distributions}] \label{lem:negative}
Given a vector of probabilities $\bm{p} \in [0, 1]^n$,
pivotal sampling (\cref{alg:pivotal}) returns a set $\textsf{S} \subseteq \{1, \ldots, n\}$ with negatively correlated selections, i.e.,
\begin{equation}
\label{eq:negative}
    \mathbb{P}\{i, j \in \textsf{S}\} \leq \bm{p}(i) \cdot \bm{p}(j), \quad 1 \leq i,j \leq n.
\end{equation}
\end{lemma}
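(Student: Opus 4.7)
The plan is to recast pivotal sampling as a martingale process on ``candidate probabilities'' and show the claim by exhibiting a pointwise supermartingale. Concretely, for each $1 \le i \le N$ introduce a sequence $P_i^{(s)} \in [0,1]$ with $P_i^{(0)} = p_i$ and $P_i^{(s)} \to \mathds{1}\{i \in \textsf{S}\}$ as the algorithm proceeds. Each elementary step of the scheme performs one ``duel'' between two active entries with values $a,b \in [0,1]$ and replaces them by one of two outcomes, chosen so that marginals are preserved:
\begin{itemize}
  \item If $a+b < 1$, set the pair to $(a+b, 0)$ with probability $a/(a+b)$ and to $(0, a+b)$ with probability $b/(a+b)$.
  \item If $a+b \ge 1$, set the pair to $(1, a+b-1)$ with probability $(1-b)/(2-a-b)$ and to $(a+b-1, 1)$ with probability $(1-a)/(2-a-b)$.
\end{itemize}
A direct computation shows that each coordinate of $P^{(s)}$ is a martingale, which is exactly the unbiasedness of pivotal sampling; any index not touched at step $s+1$ has its value frozen, and $h$ vs. $u+1$ in \cref{alg:pivotal} encodes precisely which of the two outcomes occurs.

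The key step is to show that for every pair $i \ne j$ the product $P_i^{(s)} P_j^{(s)}$ is a \emph{supermartingale}. If step $s+1$ does not involve both $i$ and $j$, at most one of the factors is updated, and by independence from the frozen factor the product remains a martingale. If the step is a duel between $i$ and $j$ with current values $a$ and $b$, then a two-line calculation gives
\begin{equation*}
    \mathbb{E}\bigl[P_i^{(s+1)} P_j^{(s+1)} \,\big|\, \mathcal{F}_s\bigr]
    \;=\;
    \begin{cases}
        0, & a+b < 1,\\
        a+b-1, & a+b \ge 1,
    \end{cases}
\end{equation*}
and in either case this is bounded above by $ab$, since $ab - (a+b-1) = (1-a)(1-b) \ge 0$. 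Hence the product is a nonnegative bounded supermartingale.

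To finish, I invoke the optional stopping theorem (or just bounded convergence, since the process terminates in finitely many steps and $P^{(s)} \in [0,1]^N$) to conclude
\begin{equation*}
    \mathbb{P}\{i, j \in \textsf{S}\}
    \;=\; \mathbb{E}\bigl[P_i^{(\infty)} P_j^{(\infty)}\bigr]
    \;\le\; P_i^{(0)} P_j^{(0)} \;=\; p_i p_j,
\end{equation*}
which is the claim. The main obstacle is a bookkeeping one: \cref{alg:pivotal} implements the scheme in a streaming fashion by tracking only one ``carry'' variable $b$ together with a stored index $\ell$, rather than explicitly maintaining the full vector $P^{(s)}$. The work is therefore to check that one iteration of the outer loop decomposes into a finite number of the elementary duels described above (the indices $f, f+1, \ldots, u+1$ are successively absorbed into the carry and then resolved against $\ell$), so that the supermartingale argument applies unchanged. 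Once this bookkeeping is in place, the negative correlation bound follows immediately; a more general statement of this style appears in Srinivasan~\cite{srinivasan2001distributions}.
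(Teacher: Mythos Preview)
The paper does not prove this lemma; it simply attributes the result to Srinivasan~\cite{srinivasan2001distributions} and uses it as a black box. Your proposal therefore cannot be compared against a proof in the paper, but it is worth noting that the argument you outline is essentially the standard one from the dependent-rounding literature and is correct. The supermartingale computation is the heart of the matter: in a duel with current values $a,b\in[0,1]$, the expected post-duel product is $0$ when $a+b<1$ and $a+b-1$ when $a+b\ge 1$, and both are bounded by $ab$ via $(1-a)(1-b)\ge 0$. Termination in finitely many steps with each $P_i^{(s)}\in\{0,1\}$ then yields the inequality.

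The only part that needs care is exactly the one you flag: the pseudocode in \cref{alg:pivotal} is a streaming implementation with a single carry $(b,\ell)$ rather than an explicit sequence of pairwise duels. One iteration of the outer loop selects $h$ proportionally from $\{\ell,f,\ldots,u\}$ and then resolves $h$ against $u{+}1$; the first of these can be unrolled as successive duels of the current survivor against $f,f{+}1,\ldots,u$ (each with $a+b<1$, so one entry is zeroed), and the final resolution against $u{+}1$ is a duel with $a+b\ge 1$ (so one entry is set to $1$ and added to $\textsf{S}$). Once you verify this decomposition line by line, the filtration and the supermartingale property transfer without change, and your conclusion follows. In short: your proof is sound and is the argument behind the cited reference, whereas the paper itself offers none.
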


We are ready to prove \cref{thm:advantages}, which establishes the near-optimality of pivotal sparsification in the triple norm.

\begin{proof}[Proof of \cref{thm:advantages}]
Fix a vector $\bm{v} \in \mathbb{C}^n$ and another vector $\bm{u} \in \mathbb{C}^n$ satisfying
$\lVert \bm{u} \rVert_{\infty} \leq 1$.
Since pivotal sampling leads to negative selection probabilities (\cref{lem:negative}),
the pivotal sparsification operator $\bm{\phi} = \bm{\phi}_{\rm piv}$ satisfies
\begin{equation*}
    \mathbb{E} \biggl[ \frac{\phi(\bm{v})(i)}{\bm{v}(i)}
    \cdot \frac{\phi(\bm{v})(j)}{\bm{v}(j)}\biggr] \leq 1, \quad \text{if } \bm{v}(i) \neq 0 \text{ and } \bm{v}(j) \neq 0.
\end{equation*}
Now define the index sets
\begin{equation*}
    \textsf{P} = \{1 \leq i \leq n \,|\, \operatorname{Re}\{\overline{\bm{u}(i)} \bm{v}(i)\} > 0\} \quad \text{and} \quad
    \textsf{N} = \{1 \leq i \leq n \,|\, \operatorname{Re}\{\overline{\bm{u}(i)} \bm{v}(i)\} < 0\}.
\end{equation*}
For each pair of indices $i,j \in \textsf{P}$ or $i,j \in \textsf{N}$, it follows
\begin{align*}
    & \mathbb{E} \bigl[\operatorname{Re}\{ \overline{\bm{u}(i)} \phi(\bm{v})(i) \} 
    \cdot \operatorname{Re}\{ \overline{\bm{u}(j)} \phi(\bm{v})(j) \}\bigr] \\
    & = \operatorname{Re}\{ \overline{\bm{u}(i)} \bm{v}(i) \} 
    \cdot \operatorname{Re}\{ \overline{\bm{u}(j)} \bm{v}(j) \} 
    \cdot \mathbb{E} \biggl[ \frac{\phi(\bm{v})(i)}{\bm{v}(i)}
    \cdot \frac{\phi(\bm{v})(j)}{\bm{v}(j)}\biggr] \\ & \leq \operatorname{Re}\{ \overline{\bm{u}(i)} \bm{v}(i) \} 
    \cdot \operatorname{Re}\{ \overline{\bm{u}(j)} \bm{v}(j) \}.
\end{align*}
Therefore the following covariance is negative:
\begin{equation}
\label{eq:parity}
    \operatorname{Cov} \bigl[\operatorname{Re}\{ \overline{\bm{u}(i)} \phi(\bm{v})(i) \}, \operatorname{Re}\{ \overline{\bm{u}(j)} \phi(\bm{v})(j) \}\bigr] \leq 0.
\end{equation}
The negative covariance relation \cref{eq:parity} allows us to calculate
\begin{align*}
    & \operatorname{Var}\bigl[\operatorname{Re}\{\bm{u}^{\ast} \bm{\phi}(\bm{v})\}\bigr] 
    = \operatorname{Var}\Bigl[\sum\nolimits_{i=1}^n \operatorname{Re}\{ \overline{\bm{u}(i)} \phi(\bm{v})(i) \} \Bigr] \\
    &\leq 2 \operatorname{Var}\Bigl[\sum\nolimits_{i \in \textsf{P}} \operatorname{Re}\{ \overline{\bm{u}(i)} \bm{\phi}(\bm{v})(i) \} \Bigr]
    + 2 \operatorname{Var}\Bigl[\sum\nolimits_{i \in \textsf{N}} \operatorname{Re}\{ \overline{\bm{u}(i)} \bm{\phi}(\bm{v})(i) \} \Bigr] \\
    &\leq 2 \sum\nolimits_{i \in \textsf{P}} \operatorname{Var}\bigl[ \operatorname{Re}\{ \overline{\bm{u}(i)} \bm{\phi}(\bm{v})(i) \} \bigr]
    + 2\sum\nolimits_{i \in \textsf{N}} \operatorname{Var}\bigl[ \operatorname{Re}\{ \overline{\bm{u}(i)} \bm{\phi}(\bm{v})(i) \} \bigr] \\
    &= 2 \sum\nolimits_{i=1}^n \operatorname{Var}\bigl[ \operatorname{Re}\{ \overline{\bm{u}(i)} \bm{\phi}(\bm{v})(i) \} \bigr].
\end{align*}
Similarly, we calculate
\begin{equation*}
    \operatorname{Var}\bigl[\operatorname{Im}\{\bm{u}^{\ast} \bm{\phi}(\bm{v})\}\bigr]
    \leq 2 \sum\nolimits_{i=1}^n \operatorname{Var}\bigl[ \operatorname{Im}\{ \overline{\bm{u}(i)} \bm{\phi}(\bm{v})(i) \} \bigr].
\end{equation*}
By combining the real and imaginary parts,
\begin{align*}
    \operatorname{Var}\bigl[ \bm{u}^{\ast} \bm{\phi}(\bm{v}) \bigr] 
    & = \operatorname{Var}\bigl[\operatorname{Re}\{\bm{u}^{\ast} \bm{\phi}(\bm{v})\}\bigr]
    + \operatorname{Var}\bigl[\operatorname{Im}\{\bm{u}^{\ast} \bm{\phi}(\bm{v})\}\bigr] \\
    & \leq 2 \sum\nolimits_{i=1}^n \operatorname{Var}\bigl[ \operatorname{Re}\{ \overline{\bm{u}(i)} \bm{\phi}(\bm{v})(i) \} \bigr]
    + 2 \sum\nolimits_{i=1}^n \operatorname{Var}\bigl[ \operatorname{Im}\{ \overline{\bm{u}(i)} \bm{\phi}(\bm{v})(i) \} \bigr] \\
    &= 2 \sum\nolimits_{i=1}^n \operatorname{Var}\bigl[ \overline{\bm{u}(i)} \bm{\phi}(\bm{v})(i) \bigr] \\
    & \leq 2 \sum\nolimits_{i=1}^n \operatorname{Var}\bigl[ \bm{\phi}(\bm{v})(i) \bigr] = 2 \, \mathbb{E} \bigl\lVert \bm{\phi}(\bm{v}) - \bm{v} \bigr\rVert^2.
\end{align*}
The last line bounds the variance using the fact that $\lVert \bm{u} \rVert_{\infty} \leq 1$.

Last, for any random vector $\bm{z} \in \mathbb{C}^n$ satisfying $\lVert \bm{z} \rVert_0 \leq m$ and $\mathbb{E}\bigl[\bm{z}\bigr] = \bm{v}$, \cref{prop:relaxed,lem:averaged} allow us to calculate
\begin{equation*}
    2 \, \mathbb{E} \bigl\lVert \bm{\phi}(\bm{v}) - \bm{v} \bigr\rVert^2
    \leq 2 \, \mathbb{E} \lVert \bm{Z} - \bm{v} \rVert^2 
    \leq 2 \, \smalliii{\bm{Z} - \bm{v}}^2.
\end{equation*}
We combine with the variance bound from \cref{prop:relaxed} to show
\begin{equation*}
    \vertiii{\bm{\phi}(\bm{v}) - \bm{v}}^2 
    \leq \min_{0 \leq i \leq m} \frac{2}{m - i} \biggl(\sum\nolimits_{j = i + 1}^n |\bm{v}^{\downarrow}(i)|\biggr)^2.
\end{equation*}
This verifies the error bound \cref{eq:amazing_bound} and completes the proof.
\end{proof}

Last, by examining the proof of \cref{thm:advantages}, we record a simple corollary which allows us to sometimes remove a factor of two from the error bound.
\begin{corollary}[Nonnegative sparsification bound] \label{cor:sharper}
For any vectors $\bm{u}, \bm{v} \in \mathbb{C}^n$ satisfying $\operatorname{Re}\{\overline{\bm{u}(i)} \bm{v}(i)\} \geq 0$ and $\operatorname{Im}\{\overline{\bm{u}(i)} \bm{v}(i) \} \geq 0$ for all $1 \leq i \leq n$,
the pivotal sparsification error is bounded by
\begin{equation*}
    \mathbb{E} \bigl| \bm{f}^\ast \bigl(\bm{\phi}_{\rm piv}(\bm{v}) - \bm{v}\bigr) \bigr|^2 
    \leq \min_{0 \leq i \leq m} \frac{1}{m - i} \biggl(\sum\nolimits_{j = i + 1}^n |\bm{v}^{\downarrow}(i)|\biggr)^2.
\end{equation*}
\end{corollary}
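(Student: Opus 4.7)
The plan is to mimic the proof of \cref{thm:advantages} verbatim, with a single simplification. In that proof, the bound $\textup{Var}[\textup{Re}\{\bm{f}^{\ast} \bm{\Phi}^{\rm piv}(\bm{v})\}] \leq 2 \sum_i \textup{Var}[\textup{Re}\{\overline{f}_i \Phi_i^{\rm piv}(\bm{v})\}]$ picked up a factor of two because the sum $\sum_i \textup{Re}\{\overline{f}_i \Phi_i^{\rm piv}(\bm{v})\}$ had to be split into a positive-sign piece (indices with $\textup{Re}\{\overline{f}_i v_i\} > 0$) and a negative-sign piece (indices with $\textup{Re}\{\overline{f}_i v_i\} < 0$), so that the negative-correlation property of pivotal sampling (\cref{lem:negative}) could be invoked separately within each piece; the price of the splitting was the elementary estimate $\textup{Var}[X+Y] \leq 2\textup{Var}[X] + 2\textup{Var}[Y]$. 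Under the hypothesis $\textup{Re}\{\overline{f}_i v_i\} \geq 0$, the negative-sign piece is empty, no splitting is needed, and negative correlation applies directly to the full real-part sum without any inflation. The identical observation with $\textup{Im}$ in place of $\textup{Re}$ handles the imaginary part.

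Carrying this through, I would conclude
\begin{equation*}
    \mathbb{E}\bigl|\bm{f}^{\ast}\bigl(\bm{\Phi}^{\rm piv}(\bm{v}) - \bm{v}\bigr)\bigr|^2
    = \textup{Var}\bigl[\bm{f}^{\ast} \bm{\Phi}^{\rm piv}(\bm{v})\bigr]
    \leq \sum_{i=1}^N |f_i|^2 \, \textup{Var}\bigl[\Phi_i^{\rm piv}(\bm{v})\bigr]
    \leq \mathbb{E}\bigl\lVert \bm{\Phi}^{\rm piv}(\bm{v}) - \bm{v}\bigr\rVert^2,
\end{equation*}
using $|f_i| \leq 1$ in the final step (the implicit triple-norm normalization already in force in the proof of \cref{thm:advantages}). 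Invoking the $L^2$ error bound \cref{eq:variance_bound} from \cref{prop:relaxed} then delivers the stated inequality.

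There is no real obstacle here; the corollary is essentially a bookkeeping remark that pinpoints exactly where the factor of two in the proof of \cref{thm:advantages} is paid, and observes that that step is vacuous precisely when every entrywise product $\overline{f}_i v_i$ lies in the closed nonnegative real-and-imaginary quadrant. The only thing to be mildly careful about is the simultaneous sign conditions on $\textup{Re}\{\overline{f}_i v_i\}$ and $\textup{Im}\{\overline{f}_i v_i\}$, since the variance decomposes into a real-part variance and an imaginary-part variance and each must separately avoid the splitting step.
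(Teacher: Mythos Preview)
Your proposal is correct and matches the paper's approach exactly: the paper does not give a separate proof but simply states that the corollary follows ``by examining the proof of \cref{thm:advantages}'', and you have carried out precisely that examination, correctly locating the factor of two in the sign-splitting step and observing that both the real and imaginary splittings become vacuous under the stated hypotheses. Your remark that the bound $|f_i|\le 1$ is implicit in the statement (inherited from the triple-norm context) is also apt.
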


\section{RSRI error bounds} \label{sec:RSRI_error}

Our main goal in this section is to prove \cref{thm:extended},
which is a stronger version of \cref{thm:main} from the introduction.

\begin{theorem}[Extended error bound] \label{thm:extended}
Suppose RSRI with sparsity level $m$ is applied to an $n \times n$ linear system $\bm{A} \bm{x} = \bm{b}$, and set $\bm{G} = \mathbf{I} - \bm{A}$.
Then, RSRI returns a solution $\overline{\bm{x}}_t$ satisfying the bias-variance formula
\begin{equation}
\label{eq:formula}
    \vertiii{ \bm{A} \overline{\bm{x}}_t - \bm{b} }^2
    \leq \underbrace{\bigl\lVert \bm{A} \,\mathbb{E}\bigl[\overline{\bm{x}}_t\bigr] - \bm{b} \bigr\rVert_1^2}_{\textup{bias}^2}
    + \underbrace{\vertiii{ \bm{A} \overline{\bm{x}}_t - \bm{A} \,\mathbb{E}\bigl[ \overline{\bm{x}}_t \bigr] }^2}_{\textup{variance}}.
\end{equation}
The square bias is bounded by
\begin{equation*}
    \textup{bias}^2 \leq 
    \biggl(\frac{2 \sup_{s \geq 0} \lVert \bm{G}^s \rVert_1 \cdot \bigl\lVert \bm{G}^{t_{\rm b}} \bm{x}_{\star} \bigr\rVert_1}{t - t_{\rm b}}\biggr)^2,
\end{equation*}
where $\bm{x}_\star$ is the exact solution.
If $\bm{G}$ is a strict $1$-norm contraction, $\lVert \bm{G} \rVert_1 < 1$, the variance is bounded by
\begin{equation*}
    \textup{variance}
    \leq \frac{8 t}{(t - t_{\rm b})^2} \cdot 
    \frac{1}{m} \biggl(\frac{\lVert \bm{b} \rVert_1}{1 - \lVert \bm{G} \rVert_1}\biggr)^2.
\end{equation*}
Alternately, if $m \geq m_{\bm{G}} = \sum\nolimits_{s=0}^\infty \lVert | \bm{G} |^s \rVert_1^2$, the variance is bounded by
\begin{equation*}
    \textup{variance}
    \leq 
    \frac{8 t \sup_{s \geq 0} \lVert \bm{G}^s \rVert_1^2}{(t - t_{\rm b})^2}
    \cdot \min_{0 \leq i \leq m - m_{\bm{G}}} \frac{1}{m - m_{\bm{G}} - i} 
    \biggl(\sum\nolimits_{j = i + 1}^n \tilde{\bm{x}}^{\downarrow}(i) \biggr)^2.
\end{equation*}
Here, $\tilde{\bm{x}} \in \mathbb{R}^n$ is the solution to the regularized linear system
$\tilde{\bm{A}} \tilde{\bm{x}} = \tilde{\bm{b}}$, where $\tilde{\bm{A}} = \mathbf{I} - |\bm{G}|$, $\tilde{\bm{b}} = |\bm{b}|$, and $\tilde{\bm{x}}^{\downarrow} \in \mathbb{R}^n$ is the decreasing rearrangement of $\tilde{\bm{x}}$.
\end{theorem}
\begin{proof}
For any $\bm{u} \in \mathbb{C}^n$, we observe
\begin{equation*}
    \mathbb{E} \bigl| \bm{u}^\ast \bigl( \bm{A} \overline{\bm{x}}_t - \bm{b} \bigr)\bigr|^2
    = \bigl| \bm{u}^\ast \bigl( \bm{A}\, \mathbb{E}\bigl[\overline{\bm{x}}_t\bigr] - \bm{b} \bigr)\bigr|^2
    + \mathbb{E} \bigl| \bm{u}^\ast \bigl( \bm{A} \overline{\bm{x}}_t - \bm{A}\, \mathbb{E}\bigl[ \overline{\bm{x}}_t\bigr] \bigr)\bigr|^2.
\end{equation*}
By taking the supremum over all $\bm{u}$ satisfying $\lVert \bm{u} \rVert_{\infty} \leq 1$,
we establish the bias-variance formula \cref{eq:formula}.
We will prove the bias bounds in \cref{sec:bias_bounds} and the variance bounds in \cref{sec:variance_bounds}.
\end{proof}

\Cref{thm:extended} demonstrates that raising the sparsity threshold $m$ has multiple benefits.
First, raising $m$ leads to faster-than-$1/\sqrt{m}$ convergence if the regularized solution $\tilde{\bm{x}}$ has rapidly decaying entries.
Second, raising $m$ extends RSRI to any system in which the largest eigenvalue of $|\bm{G}|$ is strictly less than one.
Indeed, RSRI is guaranteed to converge as $t \rightarrow \infty$, if we set
\begin{equation*}
    m \geq m_{\bm{G}} = \sum\nolimits_{s=0}^\infty \lVert | \bm{G} |^s \rVert_1^2.
\end{equation*}
In comparison, the deterministic Richardson iteration (\cref{alg:ji_classical}) converges if the spectral radius of $\bm{G}$ is strictly less than one.
The spectral radius of $\bm{G}$ is always bounded from above by the largest eigenvalue of $|\bm{G}|$ \cite[Thm.~8.3.2]{horn2012matrix}.

To measure error, \cref{thm:extended} uses the triple norm, which is more useful than the $L^2$ norm.
Thanks to  our use of the triple norm, we are able to transfer our results to the PageRank problem and verify \cref{prop:pagerank}.

\begin{proof}[Proof of \cref{prop:pagerank}]
In the PageRank problem $\bm{A} \bm{x} = \bm{b}$ where $\bm{A} = \mathbf{I} - \alpha \bm{P}$ and $\bm{b} = (1 - \alpha) \bm{s}$, we observe that $\bm{A}^{-1} = \sum\nolimits_{s=0}^{\infty} \alpha^s \bm{P}^s$ has nonnegative columns that sum to $\sum_{s=0}^{\infty} \alpha^s = 1/(1 - \alpha)$.
Therefore, $\lVert \bm{A}^{-1} \rVert_1 = 1/(1 - \alpha)$, and we calculate
\begin{align*}
    \vertiii{ \overline{\bm{x}}_t - \bm{x}_\star }^2
    &= \max_{\lVert \bm{u} \rVert_{\infty} \leq 1} \mathbb{E} \bigl \lVert \bm{u}^\ast \bigl(\overline{\bm{x}}_t - \bm{x}_\star \bigr) \bigr \rVert^2 \\
    &\leq \lVert \bm{A}^{-1} \rVert_1^2 \max_{\lVert \bm{u} \rVert_{\infty} \leq 1} \mathbb{E} \bigl \lVert \bm{u}^\ast \bigl( \bm{A} \overline{\bm{x}}_t - \bm{b} \bigr) \bigr \rVert^2
    = \lVert \bm{A}^{-1} \rVert_1^2 
    \vertiii{ \bm{A} \overline{\bm{x}}_t - \bm{b} }^2 \\
    &= \frac{1}{(1 - \alpha)^2} 
    \vertiii{ \bm{A} \overline{\bm{x}}_t - \bm{b} }^2.
\end{align*}
To verify \cref{prop:pagerank}, we apply the bias and variance bounds from \cref{thm:extended} and use the fact that the regularized system $\tilde{\bm{A}} \tilde{\bm{x}} = \tilde{\bm{b}}$ is the same as the original system $\bm{A} \bm{x} = \bm{b}$.
\end{proof}

\subsection{Bias bound}
\label{sec:bias_bounds}

In this section, we bound RSRI's bias.
RSRI has the same bias that would result from deterministic Richardson iteration (\cref{alg:ji_classical}) after averaging the iterates $\bm{x}_{t_{\rm b}}, \ldots, \bm{x}_{t - 1}$, and we prove the following bound.

\begin{proposition}[Bias bound]
\label{prop:bias_bound}
Suppose RSRI is applied to a linear system $\bm{A} \bm{x} = \bm{b}$, and set $\bm{G} = \mathbf{I} - \bm{A}$.
RSRI returns an approximation $\overline{\bm{x}}_t$ of the exact solution $\bm{x}_{\star}$ with bias bounded by
\begin{equation*}
    \bigl\lVert \bm{A}\, \mathbb{E}\bigl[\overline{\bm{x}}_t\bigr] - \bm{b} \bigr\rVert_1
    \leq \frac{2 \sup_{s \geq 0} \lVert \bm{G}^s \rVert_1 \cdot \bigl\lVert \bm{G}^{t_{\rm b}} \bm{x}_{\star} \bigr\rVert_1}{t - t_{\rm b}}.
\end{equation*}
\end{proposition}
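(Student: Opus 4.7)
The plan is to exploit the unbiasedness of the sparsification operator $\bm{\Phi}^{(s)}$ from \cref{alg:optimal} to reduce the bias analysis of RSRI to a deterministic Richardson iteration analysis, and then extract the bound via a telescoping identity.

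First, I would show by induction on $s$ that $\mathbb{E}\bm{X}^{(s)} = \bm{x}^{(s)}$, where $\bm{x}^{(s)}$ denotes the deterministic Richardson iterate defined in \cref{eq:original}. The base case $\mathbb{E}\bm{X}^{(0)} = \bm{b} = \bm{x}^{(0)}$ is immediate. For the induction step, the tower property of conditional expectation combined with the unbiasedness property $\mathbb{E}[\bm{\Phi}^{(s)}(\bm{v}) \mid \bm{v}] = \bm{v}$ yields $\mathbb{E}\bm{X}^{(s)} = \bm{G}\,\mathbb{E}\bm{X}^{(s-1)} + \bm{b} = \bm{G}\bm{x}^{(s-1)} + \bm{b} = \bm{x}^{(s)}$. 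Consequently, $\mathbb{E}\overline{\bm{X}}^{(t)} = \tfrac{1}{t - t_{\min}}\sum_{s=t_{\min}}^{t-1} \bm{x}^{(s)}$.

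Next, I would use the standard Richardson error recursion $\bm{x}^{(s)} - \bm{x} = \bm{G}^s(\bm{b} - \bm{x})$ together with $\bm{A} = \mathbf{I} - \bm{G}$ to write $\bm{A}\bm{x}^{(s)} - \bm{b} = \bm{A}(\bm{x}^{(s)} - \bm{x}) = (\bm{G}^s - \bm{G}^{s+1})(\bm{b} - \bm{x})$. Summing and telescoping over $s = t_{\min}, \ldots, t - 1$ gives
\begin{equation*}
    \bm{A}\,\mathbb{E}\overline{\bm{X}}^{(t)} - \bm{b} \;=\; \frac{1}{t - t_{\min}}(\bm{G}^{t_{\min}} - \bm{G}^{t})(\bm{b} - \bm{x}).
\end{equation*}
The identity $\bm{b} - \bm{x} = -\bm{G}\bm{x}$, which follows from $\bm{A}\bm{x} = \bm{b}$, then converts this expression into $\tfrac{1}{t - t_{\min}}(\bm{G}^{t+1}\bm{x} - \bm{G}^{t_{\min}+1}\bm{x})$.

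Finally, since $\bm{A}\,\mathbb{E}\overline{\bm{X}}^{(t)} - \bm{b}$ is a deterministic vector, its triple norm reduces to its $1$-norm by $\ell_1$–$\ell_\infty$ duality. Applying the triangle inequality together with submultiplicativity, I bound $\lVert\bm{G}^{t_{\min}+1}\bm{x}\rVert_1 \leq \lVert\bm{G}\rVert_1\lVert\bm{G}^{t_{\min}}\bm{x}\rVert_1$ and $\lVert\bm{G}^{t+1}\bm{x}\rVert_1 \leq \lVert\bm{G}^{t - t_{\min}+1}\rVert_1\lVert\bm{G}^{t_{\min}}\bm{x}\rVert_1$; both are dominated by $\sup_{s\geq 0}\lVert\bm{G}^s\rVert_1 \cdot \lVert\bm{G}^{t_{\min}}\bm{x}\rVert_1$, yielding the stated factor of $2$. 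There is no serious obstacle here: the computation is short once one recognizes the telescoping structure, and the only subtlety worth flagging is the identification $\vertiii{\bm{v}} = \lVert\bm{v}\rVert_1$ for deterministic $\bm{v}$, which follows directly from the definition of the triple norm.
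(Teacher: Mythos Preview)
Your proposal is correct and follows essentially the same approach as the paper: both reduce the bias to the deterministic Richardson iterate via unbiasedness of $\bm{\Phi}^{(s)}$, arrive at the same closed-form $\tfrac{1}{t-t_{\min}}(\bm{G}^{t+1}-\bm{G}^{t_{\min}+1})\bm{x}$, identify the triple norm with the $1$-norm for deterministic vectors, and finish with submultiplicativity. The only cosmetic difference is that the paper factors out $\bm{G}^{t_{\min}}$ before applying the triangle inequality, whereas you split the two terms separately; both routes yield the same constant.
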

\begin{proof}
We can determine the bias of RSRI from the recursion
\begin{equation*}
    \mathbb{E} \bigl[ \bm{x}_s \bigr] = \bm{G}\, \mathbb{E} \bigl[ \bm{x}_{s-1} \bigr] + \bm{b}.
\end{equation*}
The recursion leads to
\begin{equation*}
    \mathbb{E} \bigl[\bm{x}_s\bigr] = \sum\nolimits_{r=0}^{s-1} \bm{G}^r \bm{b} = \bm{x}_{\star} - \bm{G}^s \bm{x}_{\star},
\end{equation*}
where we have substituted $\bm{b} = \bm{A} \bm{x}_\star = (\mathbf{I} - \bm{G}) \bm{x}_{\star}$.
Using
$\overline{\bm{x}}_t = \frac{1}{t - t_{\rm b}} \sum\nolimits_{s=t_{\rm b}}^{t - 1} \bm{x}_s$ and $\bm{A} = \mathbf{I} - \bm{G}$, we calculate
\begin{align*}
    \bm{A}\, \mathbb{E} \bigl[\overline{\bm{x}}_t \bigr] - \bm{b}
    &= (\mathbf{I} - \bm{G}) \Bigl(\frac{1}{t - t_{\rm b}} \sum\nolimits_{s=t_{\rm b}}^{t - 1} \mathbb{E} \bigl[ \bm{x}_s \bigr] \Bigr) - (\mathbf{I} - \bm{G}) \bm{x}_{\star} \\
    &= (\mathbf{I} - \bm{G}) \Bigl(-\frac{1}{t - t_{\rm b}} \sum\nolimits_{s=t_{\rm b}}^{t - 1} \bm{G}^s \bm{x}_{\star} \Bigr) \\
    &= \frac{1}{t - t_{\rm b}} (\bm{G}^t - \bm{G}^{t_{\rm b}}) \bm{x}_{\star}.
\end{align*}
We apply the series of upper bounds
\begin{align}
    \bigl\lVert \bm{A}\, \mathbb{E}\bigl[ \overline{\bm{x}}_t\bigr] - \bm{b} \bigr\rVert_1
    &= \frac{1}{t - t_{\rm b}}  \bigl\lVert \bigl(\bm{G}^{t} - \bm{G}^{t_{\rm b}}\bigr) \bm{x}_{\star} \bigr\rVert_1 \\
\label{eq:submultiplicativity}
    &\leq \frac{1}{t - t_{\rm b}} \lVert \bm{G}^{t -t_{\rm b}} - \mathbf{I} \rVert_1 \bigl\lVert \bm{G}^{t_{\rm b}} \bm{x}_{\star} \bigr\rVert_1 \\
\label{eq:subadditivity}
    &\leq \frac{2}{t - t_{\rm b}} \sup_{s \geq 0} \lVert \bm{G}^s \rVert_1 \bigl\lVert \bm{G}^{t_{\rm b}} \bm{x}_{\star} \bigr\rVert_1,
\end{align}
where
\cref{eq:submultiplicativity} is due to the submultiplicativity of the matrix 1-norm
and \cref{eq:subadditivity} is due to the subadditivity of the matrix 1-norm.
\end{proof}

\subsection{Variance bounds} \label{sec:variance_bounds}

In this section, we bound the variance of RSRI.
To that end, we will need to bound the maximum square sparsification error
\begin{equation}
\label{eq:to_bound}
    \sup_{t \geq 0} \vertiii{\bm{\phi}_{t+1}(\bm{x}_t) - \bm{x}_t}^2.
\end{equation}
We first analyze \cref{eq:to_bound} in the simple case that $\bm{G}$ is a strict one-norm contraction.

\begin{proposition}[Strict one-norm contraction bound] \label{prop:strict}
Suppose RSRI with sparsity level $m$ is applied to a linear system $\bm{A} \bm{x} = \bm{b}$, 
and set $\bm{G} = \mathbf{I} - \bm{A}$.
If $\lVert \bm{G} \rVert_1 < 1$, the maximum square sparsification error is bounded by
\begin{equation}
\label{eq:bound_1}
    \sup_{t \geq 0} \vertiii{\bm{\phi}_{t+1}(\bm{x}_t) - \bm{x}_t}^2
    \leq \frac{2}{m} \biggl(\frac{\lVert \bm{b} \rVert_1}{1 - \lVert \bm{G} \rVert_1}\biggr)^2.
\end{equation}
\end{proposition}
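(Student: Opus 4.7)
The plan is to combine three ingredients: pathwise preservation of the $1$-norm by pivotal sparsification, an almost-sure a-priori bound on $\lVert \bm{X}^{(t)} \rVert_1$ coming from the contraction assumption, and the triple-norm error bound of \cref{thm:advantages} specialized to $i = 0$.

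First, I would observe that pivotal sparsification preserves the vector $1$-norm pathwise, i.e., $\lVert \bm{\Phi}^{\rm piv}(\bm{v}) \rVert_1 = \lVert \bm{v} \rVert_1$ almost surely. Indeed, from \cref{alg:optimal} the entries with indices in $\mathsf{D}$ are preserved exactly, while for each $i \in \mathsf{S}$ we have $|\phi_i(\bm{v})| = |v_i|/p_i = (m - q)^{-1} \sum_{j \notin \mathsf{D}} |v_j|$, a quantity that does not depend on $i$. Since pivotal sampling (\cref{alg:pivotal}) selects exactly $m - q$ indices (the loop runs $\sum_{i \notin \mathsf{D}} p_i = m - q$ times, adding one element each pass), summing these magnitudes gives $\sum_{j \notin \mathsf{D}} |v_j|$, and adding the contribution from $\mathsf{D}$ recovers $\lVert \bm{v} \rVert_1$.

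Next, using the RSRI recursion $\bm{X}^{(t)} = \bm{G} \bm{\Phi}^{(t)}(\bm{X}^{(t-1)}) + \bm{b}$ together with submultiplicativity of the matrix $1$-norm and the just-established preservation property, I obtain
\begin{equation*}
    \lVert \bm{X}^{(t)} \rVert_1 \leq \lVert \bm{G} \rVert_1 \lVert \bm{X}^{(t-1)} \rVert_1 + \lVert \bm{b} \rVert_1
    \quad \text{a.s.}
\end{equation*}
Iterating this from $\bm{X}^{(0)} = \bm{b}$ and summing the resulting geometric series (which is finite because $\lVert \bm{G} \rVert_1 < 1$) yields the uniform a.s. bound $\lVert \bm{X}^{(t)} \rVert_1 \leq \lVert \bm{b} \rVert_1 / (1 - \lVert \bm{G} \rVert_1)$ for every $t \geq 0$.

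Finally, I apply \cref{thm:advantages} with $i = 0$ conditionally on $\bm{X}^{(t)}$ to obtain, for every $\bm{f}$ with $\lVert \bm{f} \rVert_\infty \leq 1$,
\begin{equation*}
    \mathbb{E}\bigl[\bigl|\bm{f}^\ast(\bm{\Phi}^{(t+1)}(\bm{X}^{(t)}) - \bm{X}^{(t)})\bigr|^2 \,\big|\, \bm{X}^{(t)}\bigr] \leq \frac{2}{m}\lVert \bm{X}^{(t)} \rVert_1^2.
\end{equation*}
Taking total expectations, inserting the uniform a.s. bound on $\lVert \bm{X}^{(t)} \rVert_1$, then maximizing over $\bm{f}$ and over $t \geq 0$ yields exactly \cref{eq:bound_1}. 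The main subtlety is the interchange of conditional expectation with the supremum in the definition of the triple norm; this is harmless here because the conditional bound in \cref{thm:advantages} holds uniformly in $\bm{f}$ for each realization of $\bm{X}^{(t)}$, so one may first condition, apply the theorem, bound $\lVert \bm{X}^{(t)} \rVert_1$ pathwise, and only then take the supremum over $\bm{f}$ outside.
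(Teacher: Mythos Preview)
Your proposal is correct and follows essentially the same approach as the paper: both arguments use the pathwise $1$-norm preservation of pivotal sparsification to obtain the recursive inequality $\lVert \bm{X}^{(t)} \rVert_1 \leq \lVert \bm{G} \rVert_1 \lVert \bm{X}^{(t-1)} \rVert_1 + \lVert \bm{b} \rVert_1$, sum the geometric series, and then apply \cref{thm:advantages} with $i=0$. Your write-up is in fact more explicit than the paper's on two points---the justification that pivotal sparsification preserves the $1$-norm, and the conditioning argument needed to pass from the deterministic bound in \cref{thm:advantages} to the random iterate $\bm{X}^{(t)}$---both of which the paper asserts without elaboration.
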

\begin{proof}
The RSRI iterates are generated according to the recursion
\begin{equation*}
    \bm{x}_s = \bm{G} \bm{\phi}_s (\bm{x}_{s-1}) + \bm{b},
\end{equation*}
which leads to the recursive inequality
\begin{equation*}
    \lVert \bm{x}_s \rVert_1 
    \leq \lVert \bm{G} \rVert_1 \lVert \bm{\phi}_s(\bm{x}_{s-1}) \rVert_1 + \lVert \bm{b} \rVert_1
    = \lVert \bm{G} \rVert_1 \lVert \bm{x}_{s-1} \rVert_1 + \lVert \bm{b} \rVert_1,
\end{equation*}
where we use the fact that $\lVert \bm{\phi}_s(\bm{x}_{s-1}) \rVert_1 = \lVert \bm{x}_{s-1} \rVert_1$ with probability one.
The recursive inequality leads to the upper bound
\begin{equation*}
    \lVert \bm{x}_t \rVert_1 \leq \sum\nolimits_{s=0}^t \lVert \bm{G} \rVert_1^s \lVert \bm{b} \rVert_1 \leq \frac{\lVert \bm{b} \rVert_1}{1 - \lVert \bm{G} \rVert_1}.
\end{equation*}
Last, we apply \cref{thm:advantages} to calculate
\begin{equation*}
    \vertiii{\bm{\phi}_{t+1}(\bm{x}_t) - \bm{x}_t}^2 
    \leq \frac{2}{m} \mathbb{E} \lVert \bm{x}_t \rVert_1^2
    \leq \frac{2}{m} \biggl(\frac{\lVert \bm{b} \rVert_1}{1 - \lVert \bm{G} \rVert_1}\biggr)^2,
\end{equation*}
which confirms \cref{eq:bound_1} and completes the proof.
\end{proof}

To obtain a more powerful bound on \cref{eq:to_bound}, we need a lemma that controls the absolute values of the entries of the RSRI iterates $\bm{x}_0, \bm{x}_1, \ldots$.

\begin{lemma}[Stability lemma]
\label{lem:stability}
Suppose RSRI is applied to a linear system $\bm{A} \bm{x} = \bm{b}$, and set $\bm{G} = \mathbf{I} - \bm{A}$.
Then, the RSRI iterates $\bm{x}_0, \bm{x}_1, \ldots$ satisfy
\begin{equation}
\label{eq:entry_wise}
    \mathbb{E} \bigl|\bm{x}_t \bigr|
    \leq \sum\nolimits_{s=0}^{t-1} |\bm{G}|^s |\bm{b}|, \quad \text{for each } t \geq 0.
\end{equation}
\end{lemma}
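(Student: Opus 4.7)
The plan is to establish the bound by an entry-wise induction on $t$ that combines a triangle inequality with a conditional-expectation identity for the pivotal sparsification operator.

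First I would isolate the key fact about $\bm{\Phi}^{(s)}$ that drives the argument: the pivotal sparsification operator (\cref{alg:optimal}) produces an output $\bm{\Phi}^{(s)}(\bm{v})$ whose $i$th entry is either $0$, $v_i/p_i$, or (if $i \in \textsf{D}$) $v_i$. In all cases the nonzero entries preserve the phase of $v_i$, and the unbiasedness $\mathbb{E}[\bm{\Phi}^{(s)}(\bm{v}) \mid \bm{v}] = \bm{v}$ refines to the entry-wise identity $\mathbb{E}[|\bm{\Phi}^{(s)}(\bm{v})| \mid \bm{v}] = |\bm{v}|$, since for each $i$ the magnitude is $|v_i|/p_i$ with probability $p_i$ (or $|v_i|$ deterministically for $i \in \textsf{D}$). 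This is the only nontrivial ingredient and is immediate from the definitions in \cref{alg:optimal}.

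Next I would apply the triangle inequality entry-wise to the RSRI recursion $\bm{X}^{(s)} = \bm{G} \bm{\Phi}^{(s)}(\bm{X}^{(s-1)}) + \bm{b}$, yielding
\begin{equation*}
    \bigl|\bm{X}^{(s)}\bigr| \leq |\bm{G}| \bigl|\bm{\Phi}^{(s)}(\bm{X}^{(s-1)})\bigr| + |\bm{b}|.
\end{equation*}
Taking conditional expectation given $\bm{X}^{(s-1)}$ and applying the entry-wise identity from the previous paragraph gives $\mathbb{E}\bigl[|\bm{X}^{(s)}| \bigm| \bm{X}^{(s-1)}\bigr] \leq |\bm{G}|\, \bigl|\bm{X}^{(s-1)}\bigr| + |\bm{b}|$, and then the tower property yields the scalar recursion $\mathbb{E}|\bm{X}^{(s)}| \leq |\bm{G}|\, \mathbb{E}|\bm{X}^{(s-1)}| + |\bm{b}|$.

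Finally I would close the argument by induction on $t$. The base case $t=0$ is trivial since $\bm{X}^{(0)} = \bm{b}$. For the inductive step, substituting the hypothesis $\mathbb{E}|\bm{X}^{(t-1)}| \leq \sum_{s=0}^{t-1} |\bm{G}|^s |\bm{b}|$ into the recursion immediately gives $\mathbb{E}|\bm{X}^{(t)}| \leq \sum_{s=1}^{t} |\bm{G}|^s |\bm{b}| + |\bm{b}| = \sum_{s=0}^{t} |\bm{G}|^s |\bm{b}|$, establishing \cref{eq:entry_wise}. The main (and only) conceptual obstacle is cleanly justifying that pivotal sparsification preserves magnitudes in expectation; once that is in hand, the rest is a short induction.
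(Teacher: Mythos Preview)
Your proposal is correct and follows essentially the same approach as the paper: both use the entry-wise triangle inequality on the recursion, invoke the identity $\mathbb{E}\bigl[|\bm{\Phi}^{(s)}(\bm{X}^{(s-1)})|\bigr] = \mathbb{E}\bigl[|\bm{X}^{(s-1)}|\bigr]$, and then iterate the resulting inequality from the base case $\bm{X}^{(0)} = \bm{b}$. Your version is slightly more explicit in justifying the magnitude-preservation identity via phase preservation and in spelling out the conditional-expectation/tower-property step, but the argument is otherwise identical.
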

\begin{proof}
At any iteration $s \geq 0$, the RSRI iterates satisfy
\begin{equation*}
    |\bm{x}_s| \leq |\bm{G}| |\bm{\phi}_{s}(\bm{x}_{s-1})| + |\bm{b}|.
\end{equation*}
Taking expectations and using the fact that $\mathbb{E} \bigl|\bm{\phi}_s(\bm{x}_{s-1})\bigr| = \mathbb{E} \bigl|\bm{x}_{s-1}\bigr|$, we obtain the recursion
\begin{equation*}
    \mathbb{E} |\bm{x}_s | 
    \leq |\bm{G}|\, \mathbb{E} \bigl|\bm{\phi}_s (\bm{x}_{s-1}) \bigr| + |\bm{b}|
    = |\bm{G}| \, \mathbb{E} \bigl|\bm{x}_{s-1} \bigr| + |\bm{b}|
\end{equation*}
Since $\bm{x}_0 = \bm{0}$, this recursion validates the error bound \cref{eq:entry_wise}.
\end{proof}

Last, we establish a more powerful error bound for the maximum square sparsification error \cref{eq:to_bound}.

\begin{proposition}[More powerful bound] \label{prop:large}
Suppose RSRI is applied to an $n \times n$ linear system $\bm{A} \bm{x} = \bm{b}$, and set $\bm{G} = \mathbf{I} - \bm{A}$.
If the sparsity level satisfies $m \geq m_{\bm{G}} = \sum\nolimits_{s=0}^\infty \lVert |\bm{G}|^s \rVert_1^2$,
the maximum square sparsification error is bounded by
\begin{equation*}
    \sup_{t \geq 0} \vertiii{\bm{\phi}_{t+1}(\bm{x}_{t}) - \bm{x}_{t}}^2 
    \leq \min_{0 \leq i \leq m - m_{\bm{G}}} \frac{2}{m - m_{\bm{G}} - i} 
    \biggl(\sum\nolimits_{j = i + 1}^n \tilde{\bm{x}}^{\downarrow}(i)\biggr)^2.
\end{equation*}
Here, $\tilde{\bm{x}} \in \mathbb{R}^n$ is the solution to the regularized linear system
$\tilde{\bm{A}} \tilde{\bm{x}} = \tilde{\bm{b}}$, where $\tilde{\bm{A}} = \mathbf{I} - |\bm{G}|$, $\tilde{\bm{b}} = |\bm{b}|$, and $\tilde{\bm{x}}^{\downarrow} \in \mathbb{R}^n$ is the decreasing rearrangement of $\tilde{\bm{x}}$.
\end{proposition}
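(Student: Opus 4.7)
The plan is to apply \cref{thm:advantages} conditionally on $\bm{X}^{(t)}$, and then spend a block of $m_{\bm{G}}$ sparsity slots in order to replace the random decreasing rearrangement of $|\bm{X}^{(t)}|$ by a deterministic tail of the envelope $\tilde{\bm{x}}$ supplied by \cref{lem:stability}.

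First, I would fix $\bm{f} \in \mathbb{C}^{N}$ with $\lVert \bm{f} \rVert_{\infty} \leq 1$ and apply \cref{thm:advantages} conditionally on $\bm{X}^{(t)}$: for any integer $k$ with $k < m$,
\begin{equation*}
\mathbb{E}\bigl[\bigl|\bm{f}^{\ast}\bigl(\bm{\Phi}^{(t+1)}(\bm{X}^{(t)}) - \bm{X}^{(t)}\bigr)\bigr|^{2} \,\big|\, \bm{X}^{(t)}\bigr]
\leq \frac{2}{m-k}\Bigl(\sum\nolimits_{j>k}|X_{j}^{(t),\downarrow}|\Bigr)^{2}.
\end{equation*}
Taking the outer expectation and the supremum over $\bm{f}$ reduces the problem to bounding $\mathbb{E}\bigl[(\sum_{j>k}|X_{j}^{(t),\downarrow}|)^{2}\bigr]$.

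Second, I would derandomize the support. For any deterministic index set $I$ with $|I| \leq k$, the sum of the $N-k$ smallest-magnitude entries of $\bm{X}^{(t)}$ is dominated by $\sum_{j \notin I}|X_{j}^{(t)}|$. Setting $k = i + m_{\bm{G}}$ and taking $I$ to be the indices of the top-$i$ entries of $\tilde{\bm{x}}^{\downarrow}$ gives $\sum_{j \notin I}\tilde{x}_{j} = \sum_{j>i}\tilde{x}_{j}^{\downarrow}$, and the entrywise stability bound $\mathbb{E}|X_{j}^{(t)}| \leq \tilde{x}_{j}$ from \cref{lem:stability} delivers the first-moment estimate $\mathbb{E}\sum_{j \notin I}|X_{j}^{(t)}| \leq \sum_{j>i}\tilde{x}_{j}^{\downarrow}$.

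The main obstacle is promoting this first-moment bound to a second-moment bound. My plan is to decompose $\mathbb{E}[Y^{2}] = (\mathbb{E}Y)^{2} + \textup{Var}(Y)$ with $Y = \sum_{j \notin I}|X_{j}^{(t)}|$ and unroll the recursion $\bm{X}^{(s)} = \bm{G}\bm{\Phi}^{(s)}(\bm{X}^{(s-1)}) + \bm{b}$ backwards in $s$. Using the independence of sparsification noise across steps, the law of total variance splits $\textup{Var}(Y)$ into a sum of per-step contributions, each bounded by \cref{prop:relaxed} applied to the corresponding iterate and then amplified by $\lVert|\bm{G}|^{t-s}\rVert_{1}^{2}$ after passing through the subsequent entrywise updates. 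Summing these contributions converts the $m$ in the prefactor into $m - \sum_{s \geq 0}\lVert|\bm{G}|^{s}\rVert_{1}^{2} = m - m_{\bm{G}}$, which is precisely the replacement claimed in the proposition. The delicate part of this step is keeping the deterministic envelope $\tilde{\bm{x}}$ (rather than the fluctuating iterate) in charge of each per-step variance bound and matching the telescoping of the $\lVert|\bm{G}|^{s}\rVert_{1}^{2}$ factors with the definition of $m_{\bm{G}}$.
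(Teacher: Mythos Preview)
Your overall architecture---fix a deterministic index set, apply \cref{thm:advantages} conditionally, then control the second moment of the restricted mass $\bm{u}^{\ast}|\bm{X}^{(t)}|$ by unrolling the iteration---is the same as the paper's. Two places in your sketch do not close, however.

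First, you account for $m_{\bm{G}}$ twice. In paragraph two you set $k = i + m_{\bm{G}}$, so the prefactor from \cref{thm:advantages} is already $\tfrac{2}{m - i - m_{\bm{G}}}$; in paragraph three you say the variance unrolling ``converts the $m$ in the prefactor into $m - m_{\bm{G}}$'' once more. Pick one mechanism. The paper takes $|\textsf{E}| = i$ (your $k = i$), obtains the prefactor $\tfrac{2}{m-i}$, and the second-moment recursion produces the factor $\bigl(1 - m_{\bm{G}}/(m-i)\bigr)^{-1}$, which then combines to $\tfrac{2}{m - i - m_{\bm{G}}}$.

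Second---and this is the substantive gap---the per-step variance contributions cannot be bounded by the envelope $\tilde{\bm{x}}$; doing so is circular, since that is what you are trying to establish. Applying the sparsification bound to the step at time $t-s$ controls that contribution by
$\tfrac{\lVert|\bm{G}|^{s+1}\rVert_{1}^{2}}{m-|\textsf{E}|}\,\mathbb{E}\bigl|\bm{u}^{\ast}|\bm{X}^{(t-s-1)}|\bigr|^{2}$,
i.e.\ by the \emph{same} second moment at an earlier iterate. The paper resolves this by telescoping $\mathbb{E}\bigl|\bm{u}^{\ast}|\bm{G}|^{s}|\bm{X}^{(t-s)}|\bigr|^{2}$ in $s$ (using \cref{cor:sharper}, not \cref{prop:relaxed}, because the functional $\bm{w}$ built there is sign-aligned with the iterate), arriving at the self-referential inequality
\[
\mathbb{E}\bigl|\bm{u}^{\ast}|\bm{X}^{(t)}|\bigr|^{2}
\;\leq\;
\Bigl(\sum_{r=0}^{t}\bm{u}^{\ast}|\bm{G}|^{r}|\bm{b}|\Bigr)^{2}
\;+\;\frac{1}{m-|\textsf{E}|}\sum_{s=1}^{t}\lVert|\bm{G}|^{s}\rVert_{1}^{2}\,
\mathbb{E}\bigl|\bm{u}^{\ast}|\bm{X}^{(t-s)}|\bigr|^{2},
\]
then taking the supremum over $t$ and solving---a Gronwall step. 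Your ``sum of per-step contributions, each bounded via $\tilde{\bm{x}}$'' skips this closure, and without it the sum does not terminate at the claimed bound.
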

\begin{proof}
Fix a set $\textsf{E} \subseteq \{1, \ldots, N\}$ with $|\textsf{E}| \leq m - m_{\bm{G}}$, and let $\bm{u} \in \{0, 1\}^n$ be defined by $\bm{u}(i) = 0$ for $i \in \textsf{E}$ and $\bm{u}(i) = 1$ for $i \notin \textsf{E}$.
Then, \cref{thm:advantages} guarantees
\begin{equation}
\label{eq:combine_me}
    \vertiii{\bm{\phi}_{t+1}(\bm{x}_{t}) - \bm{x}_{t}}^2
    \leq \frac{2}{m - |\textsf{E}|} \cdot \mathbb{E} \bigl| \bm{u}^\ast |\bm{x}_t|\bigr|^2
\end{equation}
We will proceed to derive an error bound on $\mathbb{E} \bigl| \bm{u}^\ast |\bm{x}_t|\bigr|^2$.

For any $0 \leq s \leq t - 1$, we make the following calculation, which is based on expanding the square and using \cref{lem:stability} to calculate the expectations:
\begin{equation}
\label{eq:squares}
\begin{aligned}
    & \mathbb{E} \bigl| \bm{u}^\ast |\bm{G}|^s |\bm{x}_{t-s}|\bigr|^2 - \mathbb{E} \bigl| \bm{u}^\ast |\bm{G}|^{s+1} |\bm{\phi}_{t-s}(\bm{x}_{t-s-1}) | \bigr|^2 \\
    &\leq \mathbb{E} \bigl| \bm{u}^\ast |\bm{G}|^{s+1} |\phi_{t-s}(\bm{x}_{t-s-1})| + \bm{u}^\ast |\bm{G}|^s | |\bm{b} | \bigr|^2 - \mathbb{E} \bigl| \bm{u}^\ast |\bm{G}|^{s+1} |\bm{\phi}_{t-s}(\bm{x}_{t-s-1}) | \bigr|^2 \\
    % &= 2\,\bm{u}^\ast |\bm{G}|^{s+1} \,\mathbb{E} \bigl|\bm{x}_{t-s-1} \bigr| \bigl(\bm{u}^\ast |\bm{G}|^s |\bm{b}|\bigr)
    % + \bigl(\bm{u}^\ast |\bm{G}|^s |\bm{b}|\bigr)^2 \\    
    &\leq 2
    \Bigl(\sum\nolimits_{r = s+1}^{t-1} \bm{u}^\ast |\bm{G}|^r |\bm{b}|\Bigr) \bigl(\bm{u}^\ast |\bm{G}|^s |\bm{b}|\bigr)
    + \bigl(\bm{u}^\ast |\bm{G}|^s |\bm{b}|\bigr)^2 \\
    &= \Bigl(\sum\nolimits_{r = s}^{t-1} \bm{u}^\ast |\bm{G}|^r |\bm{b}|\Bigr)^2
    - \Bigl(\sum\nolimits_{r = s+1}^{t-1} \bm{u}^\ast |\bm{G}|^r |\bm{b}|\Bigr)^2.
\end{aligned}
\end{equation}
Next, we introduce a vector $\bm{w} \in \mathbb{C}^n$ with entries
\begin{equation*}
    \bm{w}(i) = 
    \begin{cases} \bm{u}^\ast |\bm{G}|^{s+1} \bm{e}_i \cdot \frac{\bm{x}_{t-s}(i)}{|\bm{x}_{t-s}(i)|}, & \bm{x}_{t-s}(i) \neq 0, \\
    0, & \bm{x}_{t-s}(i) = 0.
    \end{cases}
\end{equation*}
and observe that
\begin{equation*}
    \lVert \bm{w} \rVert_{\infty}
    =
    \max_{1 \leq i \leq n} 
    \bm{u}^* |\bm{G}|^{s+1} \bm{e}_i
    \leq
    \lVert |\bm{G}|^{s+1} \rVert_1.
\end{equation*}
We make the following calculation, which is based on the conditional expectation $\mathbb{E}\bigl[|\bm{\phi}_{t-s}(\bm{x}_{t-s-1})|\,|\,\bm{x}_{t-s-1}\bigr] = |\bm{x}_{t-s-1} |$ and an application of \cref{cor:sharper} to bound the variance:
\begin{equation}
\label{eq:truncation}
\begin{aligned}
    & \mathbb{E} \bigl| \bm{u}^\ast |\bm{G}|^{s+1} |\bm{\phi}_{t-s}(\bm{x}_{t-s-1}) | \bigr|^2
    - \mathbb{E} \bigl| \bm{u}^\ast |\bm{G}|^{s+1} |\bm{x}_{t-s-1} | \bigr|^2 \\
    &= \mathbb{E} \bigl| \bm{u}^\ast |\bm{G}|^{s+1} \bigl[|\bm{\phi}_{t-s}(\bm{x}_{t-s-1} )| - |\bm{x}_{t-s-1}| \bigr] \bigr|^2 \\
    &= \mathbb{E} \bigl| \bm{w}^\ast \bigl[\bm{\phi}_{t-s} (\bm{x}_{t-s-1} ) - \bm{x}_{t-s-1}\bigr] \bigr|^2 \\
    &\leq \frac{\lVert |\bm{G}|^{s+1} \rVert_1^2}{m - |\textsf{E}|} \cdot \mathbb{E} \bigl| \bm{u}^\ast \bigl|\bm{x}_{t-s-1}\bigr| \bigr|^2
\end{aligned}
\end{equation}
Adding \cref{eq:squares} to \cref{eq:truncation}, we find that
\begin{multline}
\label{eq:sum_me}
    \mathbb{E} \bigl| \bm{u}^\ast |\bm{G}|^s |\bm{x}_{t-s}|\bigr|^2 - \mathbb{E} \bigl| \bm{u}^\ast |\bm{G}|^{s+1} |\bm{x}_{t-s-1}|\bigr|^2 \\
    \leq \Bigl(\sum\nolimits_{r = s}^{t-1} \bm{u}^\ast |\bm{G}|^r |\bm{b}|\Bigr)^2
    - \Bigl(\sum\nolimits_{r = s+1}^{t-1} \bm{u}^\ast |\bm{G}|^r |\bm{b}|\Bigr)^2 + \frac{\lVert |\bm{G}|^{s+1} \rVert_1^2}{m - |\textsf{E}|} 
    \cdot \mathbb{E} \bigl| \bm{u}^\ast \bigl|\bm{x}_{t-s-1}\bigr| \bigr|^2.
\end{multline}
We sum over \cref{eq:sum_me} for $s = 0, 1, \ldots, t-1$ to obtain the recursion
\begin{equation*}
    \mathbb{E} \bigl| \bm{u}^\ast |\bm{x}_t|\bigr|^2 
    \leq \Bigl(\sum\nolimits_{r = 0}^{t-1} \bm{u}^\ast |\bm{G}|^r |\bm{b}|\Bigr)^2
    + \frac{1}{m - |\textsf{E}|}
    \sum\nolimits_{s=1}^t \lVert |\bm{G}|^s \rVert_1^2 \, \mathbb{E} \bigl| \bm{u}^\ast \bigl|\bm{x}_{t-s}\bigr| \bigr|^2.
\end{equation*}
The recursion implies that
\begin{equation*}
    \sup_{t \geq 0} \mathbb{E} \bigl| \bm{u}^\ast |\bm{x}_t|\bigr|^2
    \leq \frac{\bigl(\sum\nolimits_{s = 0}^\infty \bm{u}^\ast |\bm{G}|^s |\bm{b}|\bigr)^2}{1 - \dfrac{1}{m - |\textsf{E}|}
    \sum\nolimits_{s=0}^\infty \lVert | \bm{G} |^s \rVert_1^2}
    = \frac{\bigl(\sum\nolimits_{i \notin \textsf{E}} \tilde{\bm{x}}(i) \bigr)^2}{1 - \dfrac{m_{\bm{G}}}{m - |\textsf{E}|}},
\end{equation*}
where we have identified
$\tilde{\bm{x}} = \sum\nolimits_{s=0}^\infty |\bm{G}|^s |\bm{b}|$ and $m_{\bm{G}} = \sum\nolimits_{s=0}^{\infty} \lVert | \bm{G} |^s \rVert_1^2$.
Combining with \cref{eq:combine_me}, we verify 
\begin{equation*}
    \sup_{t \geq 0} \vertiii{\bm{\phi}_{t+1}(\bm{x}_{t}) - \bm{x}_t}^2
    \leq \frac{2}{m - m_{\bm{G}} - |\textsf{E}|} \Bigl(\sum\nolimits_{i \notin \textsf{E}} \tilde{\bm{x}}(i) \Bigr)^2.
\end{equation*}
Last, we optimize over the set $\textsf{E}$ to complete the proof.
\end{proof}

Now, we are ready to bound the RSRI variance and complete the proof of \cref{thm:extended}.

\begin{proposition}[Variance bounds] \label{prop:variance_bound}
Suppose RSRI is applied to an $n \times n$ linear system $\bm{A} \bm{x} = \bm{b}$ and set $\bm{G} = \mathbf{I} - \bm{A}$.
If $\bm{G}$ is a strict $1$-norm contraction, $\lVert \bm{G} \rVert_1 < 1$, then RSRI returns an estimate $\overline{\bm{x}}_t$ with variance bounded by
\begin{equation*}
    \vertiii{ \bm{A} \overline{\bm{x}}_t - \bm{A} \,\mathbb{E} \bigl[ \overline{\bm{x}}_t \bigr] }^2
    \leq \frac{8 t}{(t - t_{\rm b})^2} \cdot 
    \frac{1}{m} \biggl(\frac{\lVert \bm{b} \rVert_1}{1 - \lVert \bm{G} \rVert_1}\biggr)^2.
\end{equation*}
Alternately, if $m \geq m_{\bm{G}} = \sum\nolimits_{s=0}^\infty \lVert | \bm{G} |^s \rVert_1^2$, the variance is bounded by
\begin{equation*}
    \vertiii{ \bm{A} \overline{\bm{x}}_t - \bm{A} \,\mathbb{E} \bigl[ \overline{\bm{x}}_t \bigr] }^2
    \leq 
    \frac{8 t \sup_{s \geq 0} \lVert \bm{G}^s \rVert_1^2}{(t - t_{\rm b})^2}
    \cdot \min_{0 \leq i \leq m - m_{\bm{G}}} \frac{1}{m - m_{\bm{G}} - i} 
    \biggl(\sum\nolimits_{j = i + 1}^n \tilde{\bm{x}}^{\downarrow}(i) \biggr)^2.
\end{equation*}
Here, $\tilde{\bm{x}} \in \mathbb{R}^n$ is the solution to the regularized linear system
$\tilde{\bm{A}} \tilde{\bm{x}} = \tilde{\bm{b}}$, where $\tilde{\bm{A}} = \mathbf{I} - |\bm{G}|$, $\tilde{\bm{b}} = |\bm{b}|$, and $\tilde{\bm{x}}^{\downarrow} \in \mathbb{R}^n$ is the decreasing rearrangement of $\tilde{\bm{x}}$.
\end{proposition}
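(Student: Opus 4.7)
The plan is to expand the centered average as a martingale sum driven by the per-step sparsification errors, then control each summand through the triple-norm estimates of \cref{prop:strict,prop:large}. Define $\bm{\Delta}^{(r)} := \bm{\Phi}^{(r)}(\bm{X}^{(r-1)}) - \bm{X}^{(r-1)}$; the unbiasedness of pivotal sparsification gives $\mathbb{E}[\bm{\Delta}^{(r)} \mid \mathcal{F}_{r-1}] = \bm{0}$, where $\mathcal{F}_{r-1}$ is the $\sigma$-algebra generated by $\bm{\Phi}^{(1)}, \ldots, \bm{\Phi}^{(r-1)}$, so $\{\bm{\Delta}^{(r)}\}$ is a martingale difference sequence. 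Subtracting the mean recursion from $\bm{X}^{(s)} = \bm{G}\bm{\Phi}^{(s)}(\bm{X}^{(s-1)}) + \bm{b}$ yields $\bm{X}^{(s)} - \mathbb{E}\bm{X}^{(s)} = \bm{G}\bm{\Delta}^{(s)} + \bm{G}(\bm{X}^{(s-1)} - \mathbb{E}\bm{X}^{(s-1)})$, which unrolls to $\bm{X}^{(s)} - \mathbb{E}\bm{X}^{(s)} = \sum_{r=1}^{s} \bm{G}^{s-r+1}\bm{\Delta}^{(r)}$ since $\bm{X}^{(0)}$ is deterministic.

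Next I average over $s \in \{t_{\min}, \ldots, t-1\}$, swap the order of summation, multiply on the left by $\bm{A}$, and use the telescoping identity $\bm{A}\sum_{k=a}^{b}\bm{G}^{k} = \bm{G}^{a} - \bm{G}^{b+1}$. This produces
\begin{equation*}
    \bm{A}\bigl(\overline{\bm{X}}^{(t)} - \mathbb{E}\overline{\bm{X}}^{(t)}\bigr) = \frac{1}{t - t_{\min}}\sum_{r=1}^{t-1} \bm{N}_{r}\,\bm{\Delta}^{(r)},
\end{equation*}
where $\bm{N}_{r} = \bm{G}^{t_{\min}-r+1} - \bm{G}^{t-r+1}$ for $r \leq t_{\min}$ and $\bm{N}_{r} = \bm{G} - \bm{G}^{t-r+1}$ for $r > t_{\min}$. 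In both cases the triangle inequality gives $\lVert \bm{N}_{r} \rVert_{1} \leq 2 \sup_{s \geq 0} \lVert \bm{G}^{s} \rVert_{1}$.

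Fix any test vector $\bm{f} \in \mathbb{C}^{N}$ with $\lVert \bm{f} \rVert_{\infty} \leq 1$. The matrices $\bm{N}_{r}$ are deterministic and $\mathbb{E}[\bm{\Delta}^{(r')} \mid \mathcal{F}_{r'-1}] = \bm{0}$ for $r' > r$, so the cross terms in $\mathbb{E}\lvert \bm{f}^{\ast} \bm{A}(\overline{\bm{X}}^{(t)} - \mathbb{E}\overline{\bm{X}}^{(t)}) \rvert^{2}$ vanish, leaving
\begin{equation*}
    \mathbb{E}\bigl\lvert \bm{f}^{\ast} \bm{A}\bigl(\overline{\bm{X}}^{(t)} - \mathbb{E}\overline{\bm{X}}^{(t)}\bigr)\bigr\rvert^{2} = \frac{1}{(t-t_{\min})^{2}}\sum_{r=1}^{t-1}\mathbb{E}\bigl\lvert (\bm{N}_{r}^{\ast}\bm{f})^{\ast}\bm{\Delta}^{(r)}\bigr\rvert^{2}.
\end{equation*}
Because $\bm{N}_{r}^{\ast}\bm{f}$ is deterministic with $\lVert \bm{N}_{r}^{\ast}\bm{f} \rVert_{\infty} \leq \lVert \bm{N}_{r} \rVert_{1} \leq 2\sup_{s}\lVert \bm{G}^{s}\rVert_{1}$, the definition of the triple norm bounds each summand by $4\sup_{s}\lVert \bm{G}^{s}\rVert_{1}^{2}\,\vertiii{\bm{\Delta}^{(r)}}^{2}$. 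Taking the supremum over $\bm{f}$ and using the uniform bound $\sup_{r \geq 1}\vertiii{\bm{\Delta}^{(r)}}^{2}$, I obtain
\begin{equation*}
    \vertiii{\bm{A}\overline{\bm{X}}^{(t)} - \bm{A}\mathbb{E}\overline{\bm{X}}^{(t)}}^{2} \leq \frac{4\,t\,\sup_{s\geq 0}\lVert \bm{G}^{s}\rVert_{1}^{2}}{(t-t_{\min})^{2}}\,\sup_{r\geq 1}\vertiii{\bm{\Delta}^{(r)}}^{2}.
\end{equation*}

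Substituting the two bounds on $\sup_{r}\vertiii{\bm{\Delta}^{(r)}}^{2}$ from \cref{prop:strict,prop:large} now yields the two claimed variance bounds; in the strict contraction case one uses $\sup_{s\geq 0}\lVert \bm{G}^{s}\rVert_{1} = 1$ (attained at $s=0$), and the factor of 2 inside the sparsification bound combines with the 4 above to give the stated constant 8. The main subtlety is the conditioning argument: although $\bm{\Delta}^{(r)}$ depends on all prior randomness through $\bm{X}^{(r-1)}$, the determinism of $\bm{N}_{r}^{\ast}\bm{f}$ is precisely what lets us replace a conditional triple-norm quantity by the unconditional $\vertiii{\bm{\Delta}^{(r)}}^{2}$ already controlled in the previous subsection.
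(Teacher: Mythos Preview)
Your proof is correct and follows essentially the same route as the paper: both arguments decompose $\bm{A}(\overline{\bm{X}}^{(t)}-\mathbb{E}\overline{\bm{X}}^{(t)})$ into the martingale-difference sum $\frac{1}{t-t_{\min}}\sum_r(\bm{G}^{(t_{\min}-r+1)\vee 1}-\bm{G}^{t-r+1})\bm{\Delta}^{(r)}$, use orthogonality to reduce to a sum of squares, pass $\bm{N}_r^{\ast}\bm{f}$ through the triple norm via $\lVert\cdot\rVert_1$--$\lVert\cdot\rVert_\infty$ duality, and then invoke \cref{prop:strict,prop:large}. The only cosmetic difference is that the paper obtains the decomposition via the Doob martingale $M^{(s)}=\mathbb{E}[\bm{f}^{\ast}\bm{A}\overline{\bm{X}}^{(t)}\mid \bm{X}^{(0)},\ldots,\bm{X}^{(s)}]$ whereas you unroll the recursion directly.
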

\begin{proof}
Fix $\bm{u} \in \mathbb{C}^n$ and introduce the martingale
\begin{equation*}
    m_s 
    = \mathbb{E}\bigl[\bm{u}^\ast \bm{A} \overline{\bm{x}}_{t} \,\big|\, \bm{x}_0, \ldots, \bm{x}_s \bigr].
\end{equation*}
A brief calculation shows that the martingale differences are given by
\begin{align*}
    m_s - m_{s-1}
    &= \frac{1}{t - t_{\rm b}} \bm{u}^\ast (\mathbf{I} - \bm{G}) \sum\nolimits_{r=s \vee t_{\rm b}}^{t - 1}
    \bigl(\mathbb{E}\bigl[\bm{x}_r \,\big|\, \bm{x}_s \bigr] 
    - \mathbb{E}\bigl[\bm{x}_r \,\big|\, \bm{x}_{s-1}\bigr]\bigr) \\
    &= \frac{1}{t - t_{\rm b}} \bm{u}^\ast (\mathbf{I} - \bm{G}) \sum\nolimits_{r=s \vee t_{\rm b}}^{t - 1}
    \bm{G}^{r-s + 1} \bigl[ \bm{\phi}_s (\bm{x}_{s-1} ) - \bm{x}_{s-1}\bigr] \\
    &= \frac{1}{t - t_{\rm b}} \bm{u}^\ast \bigl(\bm{G}^{(t_{\rm b} - s + 1) \vee 1} - \bm{G}^{t - s + 1}\bigr) \bigl[ \bm{\phi}_s (\bm{x}_{s-1} ) - \bm{x}_{s-1}\bigr]
\end{align*}
for $1 \leq s \leq t$.
Moreover,
$\operatorname{Var} \bigl[\bm{u}^\ast \bm{A} \overline{\bm{x}}_t \bigr]$ is given by the martingale variance formula
\begin{equation*}
    \operatorname{Var} \bigl[\bm{f}^\ast \bm{A} \overline{\bm{x}}_t\bigr] 
    = \operatorname{Var} \bigl[ m_t \bigr]
    = \sum\nolimits_{s=1}^t \mathbb{E}\bigl| m_s - m_{s-1} \bigr|^2.
\end{equation*}
We bound the variance using the following upper bounds:
\begin{align*}
    & \vertiii{ \bm{A} \overline{\bm{x}}_t - \bm{A} \,\mathbb{E}\bigl[\overline{\bm{x}}_t\bigr] }^2 
    = \max_{\lVert \bm{u} \rVert_{\infty} \leq 1} \operatorname{Var} \bigl[\bm{u}^\ast \bm{A} \overline{\bm{x}}_t \bigr] 
    \\
    &\leq \frac{1}{(t - t_{\rm b})^2} \sum\nolimits_{s=0}^{t - 1} \max_{\lVert \bm{u} \rVert_{\infty} \leq 1} \mathbb{E} \bigl|
    \bm{u}^\ast \bigl(\bm{G}^{(t_{\rm b} - s) \vee 1} - \bm{G}^{t - s}\bigr) \bigl[\bm{\phi}_{s+1}(\bm{x}_s) - \bm{x}_s \bigr]
    \bigr|^2 \\
    &\leq \frac{1}{(t - t_{\rm b})^2} \sum\nolimits_{s=0}^{t - 1} 
    \lVert \bm{G}^{(t_{\rm b} - s) \vee 1} - \bm{G}^{t - s} \rVert_1^2
    \cdot \max_{\lVert \bm{u} \rVert_{\infty} \leq 1}
    \mathbb{E} \bigl|
    \bm{u}^\ast \bigl[\bm{\phi}_{s+1}(\bm{x}_s) - \bm{x}_s \bigr]
    \bigr|^2 \\
    &= \frac{1}{(t - t_{\rm b})^2} \sum\nolimits_{s=0}^{t - 1}
    \lVert \bm{G}^{(t_{\rm b} - s) \vee 1} - \bm{G}^{t - s} \rVert_1^2 \,
    \vertiii{ \bm{\phi}_{s+1}(\bm{x}_s) - \bm{x}_s }^2.
\end{align*}
Last, we use the fact that
\begin{equation*}
    \sum\nolimits_{s = 0}^{t - 1}
    \lVert \bm{G}^{(t_{\rm b} - s) \vee 1} - \bm{G}^{t - s} \rVert_1^2
    \leq 4 t \sup_{s \geq 0}
    \lVert \bm{G}^s \rVert_1^2
\end{equation*}
to establish the general variance formula
\begin{equation*}
    \vertiii{ \bm{A} \overline{\bm{x}}_t - \bm{A} \,\mathbb{E}\bigl[\overline{\bm{x}}_t\bigr] }^2  
    \leq 
    \frac{4 t \sup_{s \geq 0}
    \lVert \bm{G}^s \rVert_1^2}{(t - t_{\rm b})^2}
    \cdot \sup_{s \geq 0} \vertiii{ \bm{\phi}_{s+1} ( \bm{x}_{s} ) - \bm{x}_s }^2.
\end{equation*}
We apply the bounds on $\sup_{s \geq 0} \vertiii{ \bm{\phi}_{s+1} ( \bm{x}_{s} ) - \bm{x}_s }^2$ from \cref{prop:strict,prop:large} to complete the proof.
\end{proof}

\section*{Conclusion}

We have introduced a new algorithm called ``randomly sparsified Richardson iteration'' or ``RSRI'' (\cref{alg:RSRI_general}) for solving $n \times n$ linear systems of equations $\bm{A} \bm{x} = \bm{b}$.
RSRI can be applied to high-dimensional systems with $n \geq 10^9$.
At each iteration, the algorithm only needs to evaluate a random subset of $m$ columns, where $m$ is a parameter specified by the user.
Therefore, RSRI only requires $\mathcal{O}(mN)$ work per iteration if $\bm{A}$ and $\bm{b}$ are dense, or $\mathcal{O}(m q)$ work per iteration if $\bm{A}$ and $\bm{b}$ are sparse with no more than $q$ nonzero entries per column.
Because of this scaling, RSRI can efficiently generate sparse approximations to the solution vector for problems so large that the exact solution cannot be stored as a dense vector on a computer. 

RSRI is an extension of the FRI framework \cite{lim2017fast,greene2019beyond,greene2020improved,greene2022full,greene2022approximating} for speeding up deterministic fixed-point iterations with random sparsification.
In this paper, we have extended FRI for the first time to handle linear systems $\bm{A} \bm{x} = \bm{b}$, and
we have proved that RSRI achieves faster-than-$1/\sqrt{m}$ convergence.
Proving such a result has been a significant obstacle in the mathematical understanding of FRI, and our analysis will serve as the foundation for future algorithmic and mathematical developments. In particular, extending the results in this paper to FRI methods for eigenproblems~\cite{greene2022approximating}, remains an outstanding challenge.

% Our analysis suggests that SGD methods for linear systems could benefit from using additional information in the right-hand side $\bm{b}$.
% Indeed, several deterministic iterative methods in \cref{sec:coordinate_pagerank} make local updates to $\bm{x}_s$ depending on the entries of the residual $\bm{b} - \bm{A} \bm{x}_s$ with the largest magnitudes.
% This type of adaptivity could be combined with SGD methods, and the approach is beginning to be exploited in the randomized Kaczmarz literature \cite{xx}.

%Mention the sign problem?

\section*{Acknowledgments}
We would like to acknowledge Bixing Qiao, who helped us investigate classical Monte Carlo schemes for linear systems during his Master's thesis at New York University in 2020. We would also like to thank Lek-Heng Lim who helped to instigate this work, as well as Tyler Chen, Christopher Musco, Kevin Miller, and Ethan N. Epperly who provided valuable comments on an earlier draft.
Last, we would like to acknowledge Jackie Lok, who alerted us to an error in an earlier draft.

\appendix

\section{Derivation of SGD error bounds} \label{sec:discussion}
This section derives the error bounds for randomized coordinate descent and randomized Kaczmarz in \cref{sec:kaczmarz}.

At each iteration, randomized Kaczmarz chooses an index set $\textsf{S} \subseteq \{1, \ldots, n\}$ 
by sampling with replacement from the nonuniform probability distribution $p_j = \lVert \bm{A}(j, \cdot) \rVert^2 / \lVert \bm{A} \rVert_{\rm F}^2$.
Then it updates the iterate according to
\begin{align*}
    \bm{x}_s &= \bm{x}_{s-1} - \frac{\alpha \lVert \bm{A} \rVert_{\rm F}^2}{|\textsf{S}|} \sum\nolimits_{j \in \textsf{S}} \frac{\bm{A}(j, \cdot) \bm{x}_{s-1} - \bm{b}(j)}{\lVert \bm{A}(j, \cdot)^* \rVert^2} \bm{A}(j, \cdot)^* \\
    &= \bm{x}_{s-1} - \frac{\alpha \lVert \bm{A} \rVert_{\rm F}^2}{|\textsf{S}|} \sum\nolimits_{j \in \textsf{S}} \frac{\bm{A}(j, \cdot)^* \bm{A}(j, \cdot)}{\lVert \bm{A}(j, \cdot)^* \rVert^2} (\bm{x}_{s-1} - \bm{x}_\star),
\end{align*}
where the equality holds because $\bm{A}(j, \cdot) \bm{x}_\star = \bm{b}(j)$ for $\bm{x}_\star = \bm{A}^{-1} \bm{b}$.

The conditional expectation of each iterate is given by
\begin{align*}
    \mathbb{E}\bigl[\bm{x}_s - \bm{x}_\star \,|\, \bm{x}_{s-1} \bigr]
    &= \biggl( \mathbf{I} - \frac{\alpha \lVert \bm{A} \rVert_{\rm F}^2}{|\textsf{S}|}\, \mathbb{E}\biggl[\sum\nolimits_{j \in \textsf{S}} \frac{\bm{A}(j, \cdot)^* \bm{A}(j, \cdot)}{\lVert \bm{A}(j, \cdot)^* \rVert^2} \biggr] \biggr) (\bm{x}_{s-1} - \bm{x}_\star) \\
    &= \bigl( \mathbf{I} - \alpha \bm{A}^* \bm{A} \bigr) (\bm{x}_{s-1} - \bm{x}_\star),
\end{align*}
because the expectation of $\sum_{j \in \textsf{S}} \bm{A}(j, \cdot)^* \bm{A}(j, \cdot) / \lVert \bm{A}(j, \cdot) \rVert^2$ is $|\textsf{S}| \bm{A}^* \bm{A} / \lVert \bm{A} \rVert_{\rm F}^2$.

Since the indices $j_1, \ldots, j_{|\textsf{S}|} \in \textsf{S}$ are independent and identically distributed, the trace of the conditional covariance matrix satisfies
\begin{align*}
    &\operatorname{tr} \operatorname{Cov}\bigl[\bm{x}_s - \bm{x}_\star \,|\ \bm{x}_{s-1} \bigr] \\
    &= \frac{\alpha^2 \lVert \bm{A} \rVert_{\rm F}^4}{|\textsf{S}|} 
    \operatorname{tr}
    \operatorname{Cov}\biggl[\frac{\bm{A}(j_1, \cdot)^* \bm{A}(j_1, \cdot)}{\lVert \bm{A}(j_1, \cdot)^* \rVert^2} (\bm{x}_{s-1} - \bm{x}_\star) \,\bigg|\ \bm{x}_{s-1} \biggr] \\
    &= \frac{\alpha^2 \lVert \bm{A} \rVert_{\rm F}^4}{|\textsf{S}|} 
    (\bm{x}_{s-1} - \bm{x}_\star)^*
    \biggl(
    \mathbb{E}\biggl[\frac{\bm{A}(j_1, \cdot)^* \bm{A}(j_1, \cdot)}{\lVert \bm{A}(j_1, \cdot)^* \rVert^2}\biggr]
    - \mathbb{E}\biggl[\frac{\bm{A}(j_1, \cdot)^* \bm{A}(j_1, \cdot)}{\lVert \bm{A}(j_1, \cdot)^* \rVert^2}\biggr]^2
    \biggr)
    (\bm{x}_{s-1} - \bm{x}_\star)
    \\
    &= \frac{\alpha^2 \lVert \bm{A} \rVert_{\rm F}^4}{|\textsf{S}|} 
    (\bm{x}_{s-1} - \bm{x}_\star)^*
    \biggl[
    \frac{\bm{A}^* \bm{A}}{\lVert \bm{A} \rVert_{\rm F}^2}
    - \biggl(\frac{\bm{A}^* \bm{A}}{\lVert \bm{A} \rVert_{\rm F}^2} \biggr)^2
    \biggr]
    (\bm{x}_{s-1} - \bm{x}_\star).
\end{align*}
As the batch size $|\textsf{S}|$ increases, the trace of the conditional variance decreases at the Monte Carlo rate $\sim 1 / |\textsf{S}|$.

Last, we use the conditional bias and covariance formulas to derive the conditional mean square error, according to a vector-valued version of the bias-variance decomposition:
\begin{align*}
    \mathbb{E}\bigl[\lVert \bm{x}_s - \bm{x}_\star \rVert^2 \,|\, \bm{x}_{s-1} \bigr]
    &= 
    \lVert \mathbb{E}\bigl[\bm{x}_s - \bm{x}_\star \,|\, \bm{x}_{s-1} \bigr] \rVert^2 + \operatorname{tr} \operatorname{Cov}\bigl[\bm{x}_s - \bm{x}_\star \,|\ \bm{x}_{s-1} \bigr] \\
    & \begin{aligned}
    &= \lVert \bm{x}_{s-1} - \bm{x}_\star \rVert^2
    + \biggl[\frac{\alpha^2 \lVert \bm{A} \rVert_{\rm F}^2}{|\textsf{S}|} - 2 \alpha \biggr]
    \lVert \bm{A} (\bm{x}_{s-1} - \bm{x}_\star) \rVert^2 \\
    & \quad + \alpha^2 \frac{|\textsf{S}| - 1}{|\textsf{S}|} \lVert \bm{A}^* \bm{A}(\bm{x}_{s-1} - \bm{x}_\star) \rVert^2
    \end{aligned}.
\end{align*}
This establishes the mean square error \cref{eq:rk_takedown}.

Next, to optimize the variance of randomized Kaczmarz, introduce an eigenvalue decomposition $\bm{A}^* \bm{A} = \bm{V}^* \bm{\Lambda} \bm{V}$ where $\bm{V} = \begin{bmatrix} \bm{v}_1 & \cdots & \bm{v}_n \end{bmatrix}$
and $\bm{\Lambda} = \operatorname{diag}(\lambda_1, \ldots, \lambda_n)$ for $\lambda_1 \geq \cdots \geq \lambda_n$.
Observe that
\begin{align*}
    \bm{x}_{s-1} - \bm{x}_\star
    &= \sum\nolimits_{i=1}^n \bm{v}_i \bm{v}_i^* (\bm{x}_{s-1} - \bm{x}_\star), \\    \lVert \bm{x}_{s-1} - \bm{x}_\star \rVert^2
    &= \sum\nolimits_{i=1}^n |\bm{v}_i^* (\bm{x}_{s-1} - \bm{x}_\star)|^2, \\
    \lVert \bm{A} (\bm{x}_{s-1} - \bm{x}_\star) \rVert^2
    &= \sum\nolimits_{i=1}^n \lambda_i |\bm{v}_i^* (\bm{x}_{s-1} - \bm{x}_\star)|^2, \\
    \lVert \bm{A}^* \bm{A} (\bm{x}_{s-1} - \bm{x}_\star) \rVert^2
    &= \sum\nolimits_{i=1}^n \lambda_i^2 |\bm{v}_i^* (\bm{x}_{s-1} - \bm{x}_\star)|^2.
\end{align*}
We define eigenvector overlap terms $\xi_i = \mathbb{E}|\bm{v}_i^* (\bm{x}_{s-1} - \bm{x}_\star)|^2$ for $i = 1, \ldots, n$, so
the expression \cref{eq:rk_takedown} for the mean square error yields
\begin{multline}
\label{eq:optimize_me}
    \mathbb{E}\bigl\lVert \bm{x}_s - \bm{x}_\star \bigr\rVert^2
    = \sum\nolimits_{i=1}^n \xi_i f(\alpha, \lambda_i), \\
    \text{for } f(\alpha, \lambda) = 1 + \biggl[\frac{\alpha^2 \lVert \bm{A} \rVert_{\rm F}^2}{|\textsf{S}|} - 2 \alpha \biggr] \lambda + \alpha^2 \frac{|\textsf{S}| - 1}{|\textsf{S}|} \lambda^2.
\end{multline}
We will proceed to optimize this error expression
by considering the function $f(\alpha, \lambda)$.

For any $\alpha > 0$, the function $\lambda \mapsto f(\alpha, \lambda)$ is a convex quadratic, and it achieves its maximum value for $\lambda \in [\lambda_n, \lambda_1]$ at one of the endpoints, $\lambda \in \{\lambda_1, \lambda_n\}$.
Further observe that $f(\alpha, \lambda_1) \leq f(\alpha, \lambda_n)$
holds if
\begin{equation*}
    f(\alpha, \lambda_1) - f(\alpha, \lambda_n) = \biggl[\frac{\alpha^2 \lVert \bm{A} \rVert_{\rm F}^2}{|\textsf{S}|} - 2 \alpha \biggr] (\lambda_1 - \lambda_n) + \alpha^2 \frac{|\textsf{S}| - 1}{|\textsf{S}|} (\lambda_1^2 - \lambda_n^2) \leq 0,
\end{equation*}
which is equivalent to
\begin{equation}
\label{eq:condition}
    \alpha \leq \frac{2 |\textsf{S}|}{\lVert \bm{A} \rVert_{\rm F}^2 + (|\textsf{S}| - 1) (\lambda_1 + \lambda_n)}.
\end{equation}
Hence, when the step size $\alpha$ is sufficiently small, the worst-case error is achieved for the smallest eigenvalue $\lambda = \lambda_n$.

We can optimize the worst-case error with $\lambda = \lambda_n$ by using the step size
\begin{equation*}
    \alpha = \operatornamewithlimits{argmin}_{\beta > 0} f(\lambda_n, \beta) = \frac{|\textsf{S}|}{\lVert \bm{A} \rVert_{F}^2 + (|\textsf{S}| - 1)\lambda_n}.
\end{equation*}
Observe that this step size satisfies the smallness criterion \eqref{eq:condition} if
\begin{equation*}
    (|\textsf{S}| - 1) (\lambda_1 - \lambda_n) \leq \lVert \bm{A} \rVert_{F}^2.
\end{equation*}
Under the smallness criterion, we have shown
\begin{equation*}
    f(\alpha, \lambda) \leq f(\alpha, \lambda_n) = 1 - \frac{|\textsf{S}| \lambda_n}{\lVert \bm{A} \rVert_{F}^2 + (|\textsf{S}| - 1)\lambda_n}
    \quad \text{for each } \lambda \in [\lambda_n, \lambda_1].
\end{equation*}
The expression \eqref{eq:optimize_me} for the randomized Kaczmarz error then yields
\begin{equation*}
    \mathbb{E}\lVert \bm{x}_s - \bm{x}_\star \rVert^2
    \leq \biggl[1 - \frac{|\textsf{S}| \lambda_n}{\lVert \bm{A} \rVert_{F}^2 + (|\textsf{S}| - 1)\lambda_n}\biggr] \mathbb{E}\lVert \bm{x}_{s-1} - \bm{x}_\star \rVert^2
\end{equation*}
This completes the analysis of randomized Kaczmarz.

Last, we transfer the error bounds from randomized Kaczmarz to randomized coordinate descent.
Using randomized coordinate descent to solve $\bm{A} \bm{x} = \bm{b}$
produces the same distribution of iterates as using randomized Kaczmarz to solve $\bm{A}^{1/2} \bm{y} = \bm{b}$ and then setting $\bm{x} = \bm{A}^{-1/2} \bm{y}$ \cite[Sec.~5.1]{derezinski2025randomizedkaczmarzmethodsbeyondkrylov}.
We have already derived error bounds for iterates $\bm{y}_s$ when randomized Kaczmarz is applied to $\bm{A}^{1/2} \bm{y} = \bm{b}$.
We obtain corresponding error bounds for randomized coordinate descent by substituting $\bm{y}_s = \bm{A}^{1/2} \bm{x}_s$ in the right places.

\bibliographystyle{siamplain}
\bibliography{references}
\end{document}